\documentclass[10pt]{article}%
\usepackage{amsmath}
\usepackage{amsfonts}
\usepackage{amssymb}

\usepackage[all]{xypic}
\usepackage{hyperref}
\usepackage{amsmath,amsfonts,amsthm,epsfig}

\input cyracc.def
\font \tencyr=wncyr10
\newfam\cyrfam
\font\tencyr=wncyr10
\font\sevencyr=wncyr7
\font\fivecyr=wncyr5

\textfont\cyrfam=\tencyr \scriptfont\cyrfam=\sevencyr
\scriptscriptfont\cyrfam=\fivecyr

\let\kappa\varkappa

\newcommand{\R}{\mathbb{R}}
\newcommand{\Z}{\mathbb{Z}}
\newcommand{\F}{\mathbb{F}}
\newcommand{\C}{\mathbb{C}}

\newcommand{\Nh}[2]{[\hspace{-2pt}[ #1,#2]\hspace{-2pt}]}

\newcommand{\Span}[1]{\mathrm{span\,}\{#1\}}

\newcommand{\sym}{\mathrm{sym\,}}

\newcommand{\REF}[1]{{\normalfont (\ref{#1})}}

\newcommand{\Stab}{\mathrm{Stab\,}}

\newcommand{\End}{\mathrm{End\,}}

\newcommand{\GL}{\mathrm{GL\,}}
\newcommand{\Lie}{\mathrm{Lie\,}}

\newcommand{\df}{\stackrel{\mathrm{def}}{=}}

\newcommand{\virg}[1]{``#1''}

\newcommand{\id}{\mathrm{id}}

\newcommand{\rank}{\mathrm{rank\,}}
\newcommand{\codim}{\mathrm{codim\,}}

{\theoremstyle{remark}\newtheorem{remark}{Remark}}
{\theoremstyle{remark}}

\setcounter{MaxMatrixCols}{30}
%TCIDATA{OutputFilter=latex2.dll}
%TCIDATA{Version=5.00.0.2552}
%TCIDATA{LastRevised=Monday, May 08, 2006 11:41:05}
%TCIDATA{<META NAME="GraphicsSave" CONTENT="32">}
%TCIDATA{<META NAME="SaveForMode" CONTENT="1">}
\newtheorem{theorem}{Theorem}
\newtheorem{lemma}{Lemma}
\newtheorem{corollary}{Corollary}
\newtheorem{definition}{Definition}
\newtheorem{proposition}{Proposition}
\begin{document}
\date{\today}
\title{The Bianchi Variety}
\author{G. Moreno\thanks{Dipartimento Matematica e Informatica, Universit\`{a} degli Studi di
Salerno, Via Ponte Don Melillo, 84084 Fisciano (SA), Italy. 
Levi--Civita Institute, via Colacurcio 54, 83050 Santo Stefano del Sole (AV), Italy, \texttt{http://www.levi-civita.org/}. 
 Partially supported by   P.R.I.N. national grant 2008, \virg{EX 60\%} 2008 departmental research fund, and I.I.S.F. scholarship 2008. 
 Email: \texttt{gmoreno@unisa.it}}}
%\author{\firstname{A.~M.}~\surname{Vinogradov}}
%email{vinograd@unisa.it}
%\affiliation{Dipartimento di Matematica e Informatica, Universit\`{a} degli Studi di
%Salerno, Via Ponte don Melillo, 84084 Fisciano (SA), Italy,}
%\affiliation{INFN, Gruppo Collegato di Salerno, Italy.}

\maketitle
 
%\affiliation{INFN, Gruppo Collegato di Salerno, Italy.}

\begin{abstract}

The totality $\Lie(V)$ of all Lie algebra structures on a vector space $V$ over a field  $\F$  is an algebraic  variety over $\F$ on which  the group $\GL(V)$ acts naturally. We give an explicit description of $\Lie(V)$ for $\dim V=3$ which is based on the notion of compatibility of Lie algebra structures.

%and its quotient space with respect to the action of $\GL(\F,n)$, i.e., the moduli space of Lie algebra structures on $V$. The concept of \emph{compatibility} of Lie algebra structures will be introduced. It will be shown how the description of $\frac{\Lie(\F,3)}{\GL(\F,3)}$,  and the computation of the cohomology and the deformations of a Lie algebra structure, is sensibly simplified by using the language of differential forms and the compatibility relation.   

%Finally, an algorithm for realizing any element of $\Lie(\R,3)$ as a vector field on $\R^3$, will be presented.
\emph{Keywords:} Lie Algebra, Poisson Geometry, Commutative Algebra, Cohomology of Lie Algebras, Moduli Space, Deformation.\par

\emph{MSC Classification:} 13D10, 14D99, 17B99, 53D99.
\end{abstract}

\tableofcontents

\section*{Introduction}
  \subsubsection*{Historical remarks}
The problem of classifying 3--dimensional Lie algebras over $\R$ was firstly solved by   L. Bianchi at the end of the eigtheen century. Recently, various works concerning classifications of low--dimensional Lie algebras appeared (see, for instance,  \cite{Marmo1994} for a list of 4--dimensional Lie algebras   and \cite{Turkowski1988} for a special  list of real Lie algebras  of dimension $\leq 8$). Now Bianchi classification can be obtained in a more elegant coordinate--free manner. For instance,  in \cite{Agaoka1999} this is done on the basis of   the   invariants of   Lie structures, in \cite{Otto2005}  the co--differential graded calculus is used, in \cite{Otto2005} the outer derivations, etc. A shortcoming of the original Bianchi method, as well as of the above--cited works, is that they do not allow   a satisfactory description of deformations of 3--dimensional Lie algebras (see \cite{Otto2005, Fia2007, Marmo2001} and references therein).\par

%However, his  elementary techniques soon became inadequate, when  higher--dimensional classification problems (see, e.g., \cite{Marmo1994}, and \cite{Turkowski1988} for $\dim V\leq 8$) and new concepts like  deformations (see \cite{Otto2005} and references therein)  and contractions (see \cite{Fia2007, Marmo2001} and references therein)   began to be considered. There are recent papers where the Bianchi classification is re--obtained by using more coordinate--free methods, e.g., the invariants of a Lie structure (see \cite{Agaoka1999} and references therein),  the co--differential graded calculus (\cite{Otto2005}), the outer derivations (\cite{Otto2005}), etc. \par
 %
 
 It should be especially stressed   the recently emerged important role of Poisson geometry in various questions related with Lie algebras and, first of all, classification, representation, deformations, etc. (see \cite{Marmo1994}, \cite{MarmoVilasiVinogradov1998} and \cite{Marmo2003}). We shall exploit it throughout the paper.
 
%In particular, the Poisson geometry, whose interrelationship with Lie structures was well--known to Lie himself, was successfully exploited to recast  the original Bianchi classification in a  more general setting   (see \cite{Marmo1994} and \cite{Marmo2003}). 
  \subsubsection*{Aim of the paper}
  %
%$\F$ is assumed to be a  field of characteristic different than 2,  unless  the sentence \virg{real case} is  added, meaning $\F=\R$.  
%
Let $V$ be a vector space over field $\F$ of characteristic different from 2. All Lie algebra structures on $V$ form an algebraic variety denoted by $\Lie(V)$. We call it \virg{the Bianchi variety} if $\dim V=3$. 
The aim of this paper is to describe the Bianchi variety in a geometrically transparent manner.\par Our approach is based on the notion of   \emph{compatibility}   of Lie structures (see, for instance, \cite{MarmoVilasiVinogradov1998})  and   \emph{differential calculus over the \virg{manifold} $V^*$} in the spirit of \cite{Jet}.  First we show  that all three--dimensional unimodular  Lie algebra structures form an algebraic variety $\Lie_0(V)$  which is naturally identified with the space of symmetric bilinear forms on $V^*$. %A three-dimensional algebra structure splits canonically into unimodular In our approach   the Poisson geometry of a Lie structure is fully exploited, and   a reduced amount of  computations is required.\par 
Recall that a Lie algebra is \emph{unimodular} if operators of its adjoint representation are traceless.   Then we show that a generic Lie structure can be obtained by adding a \emph{non--unimodular \virg{charge}} to a unimodular structure. This \virg{charge} (see pag. \pageref{eqDisassemblaggioCanonico}) is a particular non--unimodular structure, which reduces the problem to a description of how such a \virg{charge} can be attached to unimodular structures.\par
The obtained description of $\Lie(V)$ allows, besides others, to see directly peculiarities of deformations of 3--dimensionale Lie structures. Also from this point of view the Bianchi classification can be seen as moduli space   
%
%
%More precisely, the linear structure of $V$ allows to determine a unique (up to scalars) volume form $\boldsymbol{x}$ on $V^*$. If $\boldsymbol{x}$ is conserved by the Hamiltonian vector fields, then the corresponding  Poisson structure    is called \emph{unimodular}. To any Lie structure one can then associate its so--called \emph{modular class}, which measures the deviance from unimodularity. It is a privilege of three--dimensional case, that the modular class is once again a Lie structure.\par
%The main idea of this paper is to classify separately the unimodular Lie structures and the modular classes, called, in the present three--dimensional framework, \emph{non--unimodular charges}. This allows to \virg{assemble}  any Lie structure  by adding   an unimodular Lie structure with a compatible non--unimodular charge. The fact that not all non--unimodular charges are compatible with a fixed unimodular structure is captured by the 2--cocycles of a Lie structure, and it will reveal a nice fibered structure in $\Lie(V)$, which in its turn simplifies the study of the  moduli space 
$\frac{\Lie(V)}{\GL(V)}$. %Notably, the   classification of non--unimodular charges is independent on the field $\F$.\par
%  All  concepts naturally associated with a Lie algebra structure, center, derived algebra, solvability,  2-cohomology, symplectic foliation, deformation, etc.,  become immediately visible in the proposed picture. When this \virg{atomistic} philosophy is pushed to its limit, we find out that the uncuttable     Lie structure is the   algebra $\mathfrak{g}$ whose only nontrivial commutation relation is $[x_1,x_2]=x_2$. All other Lie structures are sums of $\mathfrak{g}$--type structures.\par
    \subsubsection*{Notations and preliminaries}
    %Unless stated differently, $\F$ is a field of characteristic different than 2, and $V$ is a 3--dimensional $\F$--vector space.  
    We shall use the Einstein summation convention, assuming that the  index \virg{$i$} in $\frac{\partial}{\partial x_i}$  is treated as an \emph{upper} one. %Symbol \virg{$\bullet$} represents a direct sum, e.g., $S_\bullet(V)=\bigoplus S_i(V)$, but it is skipped in the case of the algebra $S_\bullet(V)$, denoted simply by  $S(V)$. 
    \par
By a  \emph{Lie structure} on a   vector space $V$ we mean a skew--symmetric $\F$--bracket $[\,\cdot\, ,\, \cdot\,]$ on $V$, which fulfills the \emph{Jacobi Identity}
	\[
	[v,[w,z]]=[[v,w],z]+[w,[v,z]]\quad\forall v,w,z\in V.
\]    
Fix  a basis $\{x_1,x_2,\ldots,x_n\}$ of $V$. This induces a basis  $\{\xi^1,\xi^2,\ldots,\xi^n\}$ of $V^*$,  a volume $n$--covector   $\boldsymbol{\xi}\df  \xi^{1}\wedge\xi^{2}\wedge\cdots\wedge\xi^{n}$, and its dual   $\boldsymbol{v}$.\par
An element  $c$ of $V\otimes_\F\bigwedge^2(V^*)$ looks as $c=c^k_{ij}x_k\otimes\xi^i\wedge\xi^j$ and defines a Lie algebra structure iff 
\begin{equation}\label{eqAVrietˆBianchiInCoordinate0}
	  c_{aj}^kc_{bc}^j+c_{cj}^kc_{ab}^j+c_{bj}^kc_{ca}^j=0,\quad a,b,c,k=1,2,\ldots,n.
\end{equation}
 This way  $\Lie(V)$ is  identified with the affine algebraic variety in   $V\otimes_\F\bigwedge^2(V^*)$ determined by equations \REF{eqAVrietˆBianchiInCoordinate0},  and a Lie structure $c$ identifies with the family of its \emph{structure constants}   $\{c_{ij}^k\}$.    Obviously, a natural action of     $\GL(V)$ on $V\otimes_\F\bigwedge^2(V^*)$ leaves $\Lie(V)$ invariant, and defines an action of $\GL(V)$ on $\Lie(V)$.\par If $\dim V=3$,  we introduce the  basis $\{\boldsymbol{\xi}^h\}_{h=1,2,3}$ of $\bigwedge^2(V^*)$, $\boldsymbol{\xi}^h\df \epsilon^h_{\ ij} \xi^{i}\wedge\xi^{j} $, where $\epsilon_{\ ij}^h$ is purely skew--symmetric symbol. Then an element  $c$ of $V\otimes_\F\bigwedge^2(V^*)$ looks as
 $
	c=c_h^kx_k\otimes\boldsymbol{\xi}^h
$, where   
     $c_{ij}^k=c_h^k\epsilon_{\ ij}^{h}$, and \REF{eqAVrietˆBianchiInCoordinate0} becomes
\begin{equation}\label{eqAVrietˆBianchiInCoordinate}
	\sum_i\epsilon_{mi}^{h}c_{i}^mc_{h}^k=0,\quad k=1,2,3.
\end{equation}

\section{Differential Calculus over algebra $S(V)$}

In this section elements of differential calculus over $V^*$ are   sketched, in the the spirit of differential calculus over commutative algebras (see \cite{Jet}). Below  $V$ stands for e a finite--dimensional $\F$--vector space, $n=\dim V$, and $S(V)=\bigoplus S_i(V)$, where $S_i(V)$ is the $i$--th symmetric power of $V$. The algebra $S(V)$ is naturally interpreted as the algebra of polynomials on       $V^*$, whose  $\F$--\emph{spectrum} identifies with $V^*$. Consequently, the necessary elements of differential calculus on the \virg{manifold} $V^*$ are interpreted as those over commutative algebra $S(V)$.\par
% Indeed, an $\F$--valued unitary algebra homomorphism of $S(V)$ is uniquely determined by its values on homogeneous monomials, i.e., elements of $V$. From this perspective, $V^*$ looks like a \virg{manifold} and $S(V)$ as an algebra of $\F$--valued functions on it. In particular, $x_1,x_2,x_3$ together define a global chart. \par
Denote by $D(V^*)$ the $S(V)$--module of \emph{derivations} of the algebra $S(V)$, which we interpret as vector fields on $V^*$.  Then, obviously, the map 
%
%The next step is to make the symbols $ \frac{ \partial}{ \partial x_i}$ meaningful. By analogy with   smooth manifolds, $ \frac{ \partial}{ \partial x_i}$ should be  an $\F$--linear endomorphism of $S(V)$ taking the value $\delta^i_j$ on $x_j$, and fulfilling the Leibniz rule. Notice   that  the element $1\otimes\xi^i$ of $S(V)\otimes_\F V^*$   acts  on $V$, the 0--degree part of $S(V)$, which generates the whole algebra $S(V)$.  By using the Leibniz rule, such an action can be extended to $S(V)$. The resulting endomorphism acts precisely as  $ \frac{ \partial}{ \partial x_i}$ is expected to act. More precisely, the following correspondences
\begin{eqnarray}
D(V^*) &\longrightarrow & S(V)\otimes_\F V^* \label{eqIdCV}\\
X &\leftrightarrow & X|_{V}\nonumber\\
X_\theta=a_{i_1,\dots,i_n,i} x_1^{i_1}\cdots x_n^{i_n} \frac{ \partial}{ \partial x_i}&\leftrightarrow& a_{i_1,\ldots,i_n,i} x_1^{i_1}\cdots x_n^{i_n}\otimes\xi^i = \theta\nonumber
\end{eqnarray}
where $ \frac{ \partial}{ \partial x_i}(v_1v_2\cdots v_m)\df\sum_{i=1}^n \xi^i(v) v_1\cdots v_{i-1}v_i\cdots v_m$, is a $S(V)$--module isomorphism. Put
\begin{equation}
D_\bullet(V^*)\df\bigoplus_i D_i(V^*),
\end{equation}
where $D_i(V^*)$ is the $S(V)$--module of \emph{skew--symmetric multi--$i$--derivations}  of the algebra $S(V)$, which we interpret as $i$--vector fields on $V^*$. Then a similar isomorphism between $D_\bullet(V^*)$ and $ S(V)\otimes_\F \bigwedge^\bullet V^*$ holds. In particular,  
$ c=c_{{ij}}^kx_k\otimes\xi^i\wedge\xi^j $ 
 corresponds to the   {bi--vector field} 
\begin{equation}\label{eqPC}
{P^c}\df c_{ij}^kx_k \frac{ \partial}{ \partial x_{i}}\wedge\frac{ \partial}{ \partial x_{j}}.
\end{equation}
If $n=3$ and $c=c_h^kx_k\otimes\boldsymbol{\xi}^h$, \REF{eqPC} reads
\begin{equation}\label{eqPC3}
{P^c}\df c_h^k\epsilon_{ij}^hx_k \frac{ \partial}{ \partial x_{i}}\wedge\frac{ \partial}{ \partial x_{j}}.
\end{equation}
The algebra $S(V)\otimes_\F \bigwedge^\bullet(V^*)$ is $\Z_2$--graded. For example, linear vector fields are exactly elements of bidegree $(1,1)$. We emphasize that accordingly to \REF{eqIdCV} linear vector fields correspond to endomorphisms of vector space $V$,
  \begin{equation}\label{eqLVF}{X_\varphi}\df \varphi_i^jx_j\frac{ \partial}{ \partial x_i},\end{equation}    
where, by definition, $X_\varphi(v)=\varphi(v)$, $v\in V$.\par The \emph{Liouville vector field} on $V^*$
\begin{equation*}
X_\id=x^i\frac{\partial}{\partial x^i}
\end{equation*}
plays a special role, and is  denoted by $\Delta$. A bi--vector is called \emph{linear} when its bidegree is $(1,2)$, \emph{quadratic} if it is $(2,2)$, etc.   These definitions extend straightforwardly to all tensor fields over $V^*$.\par
 Similarly,  
\begin{equation}
\Lambda^\bullet(V^*)\df\bigoplus_i \Lambda^i(V^*),
\end{equation}
is the $S(V)$--module of \emph{polynomial differential forms} on $V^*$. Here the $S(V)$--module $\Lambda^i(V^*)$ of $i$--th order differential forms on $V^*$ is identified with the $i$--th skew--symmetric power $\bigwedge^i(S(V)\otimes_\F V)$ of the $S(V)$--module $S(V)\otimes_\F V$, which coincides with $S(V)\otimes_\F\bigwedge^\bullet(V)$.  In particular,   this   identification for $i=1$ looks as
\begin{eqnarray}
\Lambda^1(V^*) &\longrightarrow & S(V)\otimes_\F V\label{eqIdentificazioneForme}\\
\omega_q=a^i_{i_1,\ldots,i_n} x_1^{i_1}\cdots x_n^{i_n}dx_i&\leftrightarrow& a^i_{i_1,\dots,i_n} x_1^{i_1}\cdots x_n^{i_n}\otimes x_i = q.\nonumber
\end{eqnarray}
%of $\Lambda^1(V^*) $ with the $S(V)$--module $S(V)\otimes_\F V$, and a similar isomorphism between $\Lambda^\bullet(V)$ and $ S(V)\otimes_\F \bigwedge^\bullet V^*$ holds.\par
%
In view of the above isomorphisms, natural operations with multi--vector fields and differential forms, such as insertion, Lie derivative, Schouten bracket, etc.,  are easily reproduced in $S(V)\otimes_\F\bigwedge^\bullet(V^*)$ and $S(V)\otimes_\F\bigwedge^\bullet(V)$. These algebras are naturally \emph{bi--graded} ($\Z^2$--graded). The total degree of an element of bidegree $(p,q)$ is $p+q$. Obviously, 
%
% 
% 
% Dually to multi--vector fields, the $S(V)$--module 
%$S(V)\otimes_\F \bigwedge(V)$ identifies with the module of \Emph{differential forms} on $V^*$.
%For example,   a  {quadratic form} ${q}=q^{ij}x_i\otimes x_j$ on $V^*$ corresponds to the     {linear differential 1--form} 
%\[
%{\omega_q}\df q^{ij}x_i  dx_j.
%\]    
%All the above constructed tensor fields come equipped with those natural operations that are familiar in the differential--geometric setting, like de Rham differentiation, insertion, Lie derivative, wedge product,     {Schouten commutator} $\Nh{\cdot}{\cdot}$, etc. For example, a bi--vector $P$ is called \emph{integrable} if $\Nh{P}{P}=0$. A 
 a tensor field   $T$ is    {homogeneous of total degree $k$} iff $L_\Delta(T)=kT$. 
% This is an alternative method to capture the linearity of a tensor field. For instance,    a bi--vector $P$ is linear if and only if it is of degree 2. Linear differential forms, i.e., the   elements of   
%$V\otimes_\F  V$, can be  characterized as the homogeneous differential forms  of degree 2.\par
 The Schouten bracket is denoted by $\Nh{\cdot}{\cdot}$.
%
  
% 

%It is interesting to notice that the symmetric properties of quadratic forms   correspond to differential properties of the corresponding to them linear differential forms.
Elements of $S(V)\otimes_\F V^*\subset S(V)\otimes_\F\bigwedge^\bullet(V^*)$ (resp.,  $S(V)\otimes_\F V\subset S(V)\otimes_\F\bigwedge^\bullet(V)$) will be called \emph{linear}. A linear 1--form ${\omega_q}\df q^{ij}x_i  dx_j$,  is closed iff the matrix  $q=\| q^{ij}\|$ is  {symmetric}. Also, observe that if $n=3$ and   $q$ is  {skew--symmetric} then $\omega_q\wedge d\omega_q$ is  {zero}.\par
A bivector $P\in S(V)\otimes_\F\bigwedge^2(V^*)$ is called \emph{Poisson} if $\Nh{P}{P}=0$. The following fundamental correspondence, for the first time established by  S. Lie, is the starting point of the paper.
\begin{proposition}\label{propLiePoisson}
There is a one--to--one correspondence between Lie algebra structures on $V$ and linear Poisson bivectors on $V^*$. Namely,
\begin{equation}\label{eqPCcomepiacealprofessore}
c\equiv \{c_{ij}^k\}\leftrightarrow P^c=c_{ij}^k x_k \frac{\partial}{\partial x_i}\wedge \frac{\partial}{\partial x_j}.
\end{equation}
\end{proposition}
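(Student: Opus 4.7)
The plan is to reduce the statement to two separate facts: (i) that the map $c \mapsto P^c$ is a linear bijection between $V\otimes_\F \bigwedge^2(V^*)$ and the space of linear bivector fields on $V^*$, and (ii) that under this bijection, the Jacobi identity on $c$ is equivalent to $\Nh{P^c}{P^c}=0$.

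Step (i) is essentially free from the framework already set up. The isomorphism $D_\bullet(V^*)\cong S(V)\otimes_\F\bigwedge^\bullet V^*$ stated after \REF{eqIdCV}, restricted to the bi--homogeneous component of bidegree $(1,2)$, identifies linear bivector fields with $V\otimes_\F\bigwedge^2(V^*)$, and formula \REF{eqPC} is exactly this identification. So I would state this identification as an immediate consequence of what precedes, remarking that linearity (=bi--homogeneity $(1,2)$) corresponds to the coefficients $c_{ij}^k$ being constants.

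For step (ii), the key computation is the Schouten self--bracket of a linear bivector. Using the derivation property of $\Nh{\cdot}{\cdot}$, the fundamental relations $\Nh{x_a}{\partial_b}=-\delta_{ab}$ and $\Nh{\partial_a}{\partial_b}=0$, I would expand
\[
\Nh{P^c}{P^c}=\Nh{c_{ij}^k x_k\partial_i\wedge\partial_j}{c_{\ell m}^n x_n\partial_\ell\wedge\partial_m}
\]
by distributing across the wedge and collecting the nonzero contractions. All contributions are cubic in $c$ and linear in $x$ (so $\Nh{P^c}{P^c}$ is a linear $3$--vector field), and the coefficient of $x_k\,\partial_a\wedge\partial_b\wedge\partial_c$ after skew--symmetrization turns out to be, up to an overall nonzero constant, the cyclic Jacobi expression
\[
c_{aj}^k c_{bc}^j + c_{cj}^k c_{ab}^j + c_{bj}^k c_{ca}^j,
\]
i.e.\ exactly the left--hand side of \REF{eqAVriet�BianchiInCoordinate0}. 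Since linear $3$--vector fields are determined by their coefficients, $\Nh{P^c}{P^c}=0$ iff these Jacobi expressions vanish for all $a,b,c,k$, which is exactly the condition that $c$ be a Lie structure.

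The only step requiring genuine care is the bookkeeping in the Schouten computation: one must track the signs coming from moving $\partial$'s past one another in $\partial_i\wedge\partial_j$, keep the skew--symmetry of $c_{ij}^k$ consistent with the wedge, and verify that the three cyclic terms arise from the three ways the $x_k$ of one factor can be contracted with a $\partial$ of the other. I expect this index matching to be the main (purely combinatorial) obstacle; once it is done, the equivalence with \REF{eqAVriet�BianchiInCoordinate0} is immediate and the correspondence $c\leftrightarrow P^c$ in \REF{eqPCcomepiacealprofessore} follows.
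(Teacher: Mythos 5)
Your proposal is correct in outline, but note that the paper itself offers \emph{no} proof of this proposition: it is stated as the classical correspondence going back to S.~Lie, with references to the literature, and the text moves on immediately to naming $P^c$ the Poisson bi--vector of $c$. So you are supplying an argument where the paper supplies a citation. Your two--step reduction is the standard one and it works: step (i) really is immediate from the isomorphism $D_\bullet(V^*)\cong S(V)\otimes_\F\bigwedge^\bullet V^*$ restricted to bidegree $(1,2)$, and step (ii) is the well--known fact that for a linear bivector $P^c$ the Schouten self--bracket $\Nh{P^c}{P^c}$ is a linear $3$--vector field whose coefficient of $x_k\,\partial_a\wedge\partial_b\wedge\partial_c$ is, up to a nonzero constant, the cyclic Jacobiator $c_{aj}^k c_{bc}^j + c_{cj}^k c_{ab}^j + c_{bj}^k c_{ca}^j$. (A quick sanity check: with $\{x_i,x_j\}=c_{ij}^k x_k$ the identity $[x_a,[x_b,x_c]]=[[x_a,x_b],x_c]+[x_b,[x_a,x_c]]$ expands precisely to that expression, consistent with the coordinate form of the Jacobi identity given in the paper's preliminaries.) The one place where your write--up is genuinely incomplete is the Schouten computation itself, which you only assert \virg{turns out} to give the Jacobiator; that is routine bookkeeping but it is the entire content of the proposition, so in a final version you should either carry it out explicitly or observe that $\Nh{P^c}{P^c}(f,g,h)$ equals (twice) the Jacobiator of the bracket $\{f,g\}=P^c(df,dg)$ evaluated on the coordinate functions, which reduces step (ii) to the elementary bracket computation above.
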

 $P^c$ given by   \REF{eqPC}  is called the \emph{Poisson bi--vector} associated with $c$, and the corresponding to it bracket is referred to as the \emph{Lie--Poission bracket} on $S(V)$ (see \cite{Marmo1994}).
\par 
 
Recall (see\cite{CabrasVinogradov1992}) that the map
\begin{equation}\label{eqDiPi}
d_P\df\Nh{P}{\,\cdot\,}:D_\bullet(V^*)\longmapsto D_\bullet(V^*),\quad P\in D_2(V^*)
\end{equation}
is a differential in $D(V^*)$, i.e., $d_P^2=0$, iff $P$ is a Poisson bivector. Moreover we have (see \cite{CabrasVinogradov1992})
\begin{proposition}
There exists an unique homomorphism $\Gamma_P:D_\bullet(V^*)\longrightarrow\Lambda^\bullet(V^*)$ of $S(V)$--algebras which is a cochain map from $(D_\bullet(V^*),d_P)$ to $(\Lambda^\bullet(V^*),d)$.
\end{proposition}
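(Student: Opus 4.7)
The plan is to exploit the fact that both $D_\bullet(V^*)$ and $\Lambda^\bullet(V^*)$ are freely generated as graded--commutative $S(V)$--algebras by their respective degree--one parts, so that any $S(V)$--algebra homomorphism between them is uniquely determined by its restriction to generators. Combining this universal property with the cochain--map constraint in degree zero should pin $\Gamma_P$ down uniquely, and also supply its construction.

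For uniqueness, note that $\Gamma_P$ necessarily acts as the identity on $D_0(V^*)=S(V)$, since it is an $S(V)$--algebra homomorphism. The cochain identity $d\circ\Gamma_P=\Gamma_P\circ d_P$ applied to a function $f\in S(V)$ therefore collapses to
\begin{equation*}
\Gamma_P(X_f)=df,\qquad X_f\df d_P f=\Nh{P}{f},
\end{equation*}
that is, $\Gamma_P$ must send every Hamiltonian vector field to its generating differential. Letting $f$ run through the coordinate functions $x_1,\dots,x_n$, and exploiting $S(V)$--linearity, this fully prescribes $\Gamma_P$ on the generating subspace $V^*\subset D_1(V^*)$, hence on all of $D_\bullet(V^*)$.

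For existence, one defines $\Gamma_P$ on the degree--one generators by the formula above, extends it multiplicatively to $D_\bullet(V^*)$, and verifies the cochain condition in higher degrees. The crucial observation is that both $d_P=\Nh{P}{\cdot}$ and the de Rham differential $d$ are graded derivations of degree $+1$ on their respective algebras---the former precisely because $\Nh{P}{P}=0$, cf.\ \cite{CabrasVinogradov1992}. Consequently the combination $\Gamma_P\circ d_P-d\circ\Gamma_P$ is itself a graded derivation relative to the algebra homomorphism $\Gamma_P$, which vanishes by construction on the generating set $S(V)\cup V^*$ and therefore vanishes on the whole of $D_\bullet(V^*)$.

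The hard part will be the construction in degree one: the defining equations $\Gamma_P(X_{x_i})=dx_i$ form, in coordinates, a linear system whose matrix is essentially $P^\sharp$, and one must both solve it for the unknowns $\Gamma_P(\partial_{i})\in\Lambda^1(V^*)$ and check that the various values assembled from different $i$ are mutually compatible. This is where the Jacobi identity \REF{eqAVriet�BianchiInCoordinate0}---equivalently the Poisson condition on $P$---enters essentially, guaranteeing that the defining system is consistent, that the multiplicative extension is unambiguous, and that the resulting map commutes with the two differentials in every bidegree.
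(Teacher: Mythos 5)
First, note that the paper itself offers no proof of this proposition --- it is quoted from \cite{CabrasVinogradov1992} --- so there is no in-paper argument to compare against; your attempt must stand on its own. Your template (pin the map down on algebra generators, extend multiplicatively, kill $\Gamma_P\circ d_P-d\circ\Gamma_P$ by a derivation argument) is the right one, but it cannot succeed in the direction the statement is printed, and the step you defer to the end as ``the hard part'' is not merely hard: it is impossible. Since a unital $S(V)$--algebra homomorphism is the identity on $S(V)=D_0(V^*)$, the cochain condition forces $\Gamma_P(P_{x_l})=dx_l$. But for a \emph{linear} Poisson bivector $P=P^c$ one has $P_{x_l}=\sum_j a_{lj}\,\partial/\partial x_j$ with the coefficients $a_{lj}=2c^k_{lj}x_k$ linear polynomials vanishing at the origin, so $\Gamma_P(P_{x_l})=\sum_j a_{lj}\Gamma_P(\partial/\partial x_j)$ lies in $\mathfrak{m}\cdot\Lambda^1(V^*)$ with $\mathfrak{m}=(x_1,\dots,x_n)$, whereas $dx_l$ does not: the linear system you propose to solve for the unknowns $\Gamma_P(\partial/\partial x_j)$ has no solution over $S(V)$. (For $n=3$ the matrix $\|a_{lj}\|$ is skew--symmetric of odd size, so its determinant is identically zero; for $P=0$ the condition degenerates to $0=dx_l$.) The Jacobi identity is irrelevant to this obstruction, and for the same reason your uniqueness argument fails: the Hamiltonian fields $P_{x_1},\dots,P_{x_n}$ do not generate $D_1(V^*)$ over $S(V)$ when $P$ is degenerate, so prescribing $\Gamma_P$ on them does not determine it on $V^*\subset D_1(V^*)$.

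The map that \cite{CabrasVinogradov1992} actually furnishes goes the other way: $\Gamma_P:\Lambda^\bullet(V^*)\to D_\bullet(V^*)$, a cochain map from $(\Lambda^\bullet(V^*),d)$ to $(D_\bullet(V^*),d_P)$, and the arrow in the statement should be read accordingly. In that direction your scheme works verbatim and with no linear system to solve: $\Lambda^\bullet(V^*)$ is the free graded--commutative $S(V)$--algebra on $dx_1,\dots,dx_n$, and these generators are themselves of the form $d(x_i)$, so the degree--zero cochain condition both forces and consistently defines $\Gamma_P(dx_i)=d_P(x_i)=P_{x_i}$; freeness makes the multiplicative extension unambiguous, giving existence and uniqueness of the algebra map simultaneously. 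The one genuine verification remaining is that the derivation $\Gamma_P\circ d-d_P\circ\Gamma_P$ also vanishes on the degree--one generators, i.e.\ that $d_P(P_{x_i})=\Nh{P}{\Nh{P}{x_i}}=\frac12\Nh{\Nh{P}{P}}{x_i}=0$; this is precisely where $\Nh{P}{P}=0$ and the graded Jacobi identity for the Schouten bracket enter. In your write--up this degree--one check is asserted (``vanishes by construction on the generating set $S(V)\cup V^*$'') but never performed; the construction only controls the degree--zero generators.
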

1--cocycles (resp., 1--coboundaries) of $d_P$ are called \emph{canonical} (resp., \emph{Hamiltonian}) vector fields on $V^*$ (with respect to the Poisson structure $P$ on $V^*$). The Hamiltonian vector field corresponding to the Hamiltonian function $f\in S(V)$ will be denoted by $P_f$, i.e., $P_f=d_P(f)$. It is easy to see that $P_f=-i_{df}(P)$ (the contraction of $df$ and $P$).\par
When $P=P_c$, the corresponding to the Hamiltonian vector fields foliation is referred to as the \emph{symplectic foliation} determined by $c$. \par

%If $P$ is a Poisson bi--vector, then $d_P\df\Nh{P}{\,\cdot\,}$ is a $D(V^*)$--valued derivation of $S(V)$. By the universal property of the de Rham differential, a unique $S(V)$--modules homomorphism $\Gamma_P$ exist, making the next diagram commutative.
%\begin{equation}
%\xymatrix{S(V) \ar[r]^{d_P} \ar[d]^{\id}& D(V^*) \ar[r] ^{d_P}\ar[d]^{\Gamma_P}&D_2(V^*) \ar[r]^{d_P}\ar[d]^{\Gamma_P} &\cdots\\
% S(V) \ar[r]^d & \Lambda^1(V^*) \ar[r] ^d&\Lambda^2(V^*) \ar[r] ^d&\cdots
%}
%\end{equation}
%Notice that $d_P$ and $\Gamma_P$ have been tacitly extended as a differential, the \emph{Hamiltonian differential},   and as an algebra homomorphism, the \emph{Hamiltonian map}, respectively (see \cite{CabrasVinogradov1992}, sec. 4, for more details). Notice that a bi--vector field $P$ is integrable if and only if $d_P^2=0$, i.e., if and only if it is a Poisson one.\par
%1-cocycles of $d_P$ are called \emph{canonical} vector fields. 1--coboundaries of $d_P$ are the \emph{Poisson}, or \emph{Hamiltonian} ones. We usually write $P_f$ for $d_P(f)$. By definition, $P_f=-i_{df}(P)$.
%           $f$ is the \emph{generating function} of $P_f$. When $P=P_c$, the corresponding to the Hamiltonian vector fields foliation is referred to as the \emph{symplectic foliation} determined by $c$. \par
          
From now on we shall assume that $\dim V=3$. 
The volume form $\boldsymbol{v}=dx^1\wedge dx^2\wedge dx^3$  determines a standard duality between  {$i$--vector fields} and $(3-i)$--differential forms. The linear bi--vector $P^c$ defined by \REF{eqPC3} is dual to the     {linear 1--form} \[\alpha_c=\sum_hc_{{h}}^kx_kdx_h,\]
i.e., $P^c(f,g)\boldsymbol{v}=df\wedge dg \wedge\alpha_c$, $f,g\in S(V)$. 

We have (see\cite{MarmoVilasiVinogradov1998})
\begin{lemma}\label{lmLemmaIntegrabilitˆ}
$P\in D_2(V^*)$ is Poisson iff $\alpha\wedge d\alpha=0$ for the dual to $P$ 1--form $\alpha$.
\end{lemma}
Denote by $q_c$ the bilinear form on $V^*$ corresponding to $\alpha_c$ in \REF{eqIdentificazioneForme}.
\begin{corollary}
If   $q_c$ is either symmetric, or skew--symmetric, then $c$ is a a Lie structure on $V$.
\end{corollary}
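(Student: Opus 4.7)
My plan is to treat this as an almost immediate corollary of the preceding lemma combined with the two parenthetical observations made just before it. Specifically, by Proposition \ref{propLiePoisson}, $c$ being a Lie structure is equivalent to $P^c$ being a Poisson bi-vector; by Lemma \ref{lmLemmaIntegrabilit�} (applied in dimension 3, where $P^c$ is dual to $\alpha_c$ via $\boldsymbol{v}$), the latter is equivalent to the Frobenius-type condition
\[
\alpha_c \wedge d\alpha_c = 0.
\]
So the whole task reduces to verifying this identity under either hypothesis on $q_c$.

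First I would handle the symmetric case. Under the identification \REF{eqIdentificazioneForme}, $\alpha_c$ is the linear 1-form $\omega_{q_c} = q_c^{ij} x_i\, dx_j$. The remark preceding the lemma states that such a form is closed precisely when its matrix is symmetric. Hence, if $q_c$ is symmetric, then $d\alpha_c = 0$, and the integrability condition holds trivially.

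Next I would address the skew-symmetric case, which is the one that genuinely uses $n = 3$. The same preliminary remark explicitly notes that in dimension 3 a linear 1-form with skew matrix satisfies $\omega_q \wedge d\omega_q = 0$. Applied to $\alpha_c = \omega_{q_c}$, this gives $\alpha_c \wedge d\alpha_c = 0$ directly.

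The only point where any real work might be hidden is justifying this last remark, should a reader demand it: one writes $d\alpha_c = q_c^{ij}\, dx_i \wedge dx_j$ and expands
\[
\alpha_c \wedge d\alpha_c = q_c^{ij} q_c^{kl}\, x_i\, dx_j \wedge dx_k \wedge dx_l,
\]
then collects the coefficient of each $x_i\, dx_1 \wedge dx_2 \wedge dx_3$ and checks it vanishes by the skew-symmetry of $q_c$ together with $n = 3$ (the three-index Levi-Civita symbol is what forces cancellation). This is a short direct computation and, given the remark is already asserted, need not be reproduced in the proof. So in the end the proof is essentially a two-line chain of citations, with no genuine obstacle.
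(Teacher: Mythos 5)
Your proposal is correct and follows exactly the route the paper intends: the paper's own proof is the one-line citation ``Directly from Lemma \ref{lmLemmaIntegrabilit�} and Proposition \ref{propLiePoisson}'', and you simply make explicit the two preliminary observations (closedness of $\omega_q$ for symmetric $q$, and $\omega_q\wedge d\omega_q=0$ for skew $q$ when $n=3$) that the paper leaves implicit. No discrepancy.
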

\begin{proof}
Directly from Lemma \ref{lmLemmaIntegrabilitˆ} and Proposition \ref{propLiePoisson}.
\end{proof}

%It is straightforward to verify that 
%$P^c$ is integrable if and only if $\alpha_c\wedge d\alpha_c=0.$ (see, e.g.,\cite{MarmoVilasiVinogradov1998}).  
% Then, in view of  Lemma \ref{lemSimmetria}, if the bilinear form $q_c$ corresponding to $\alpha_c$ is  {either symmetric or skew--symmetric}, then $P^c$ is integrable. Finally, by using the equation \REF{eqAVrietˆBianchiInCoordinate}, one can recognize that  $P^c$ is  {integrable} if and only if $c$ is a  {Lie structure} on $V$.  Consequently, if  {$q_c$} is either symmetric of skew--symmetric, then  {$c$} is a Lie structure. 

Hence $\Lie (V)$ can   be identified with \emph{a subset in the space of linear differential 1--forms}.    As such, it contains  the subspace $\Lie_0(V)$   of    differential forms which correspond to symmetric bilinear forms on $V$ in  \REF{eqIdentificazioneForme}, and the subspace $N$ of those which correspond to   {skew--symmetric} differential forms. Recall that a structure $c$ is unimodular if and only if $\alpha_c$ is symmetric (see\cite{MarmoVilasiVinogradov1998}). Accordingly, elements of $\Lie_0(V)$ (resp., $N$) are called unimodular (resp., \emph{purely non--unimodular}).\par

Since a bilinear form splits into the sum of a symmetric and a skew--symmetric part, a Lie structure $c$ on $V$ can be \virg{disassembled} into the sum of an unimodular component  with  a purely non--unimodular one,  
\begin{equation}\label{eqDisassemblaggioCanonico}
\alpha_c=dF+\alpha,\quad dF\in\Lie_0(V),\alpha\in N.
\end{equation}
In terms of Lie structures, \REF{eqDisassemblaggioCanonico} reads $c=c_F+c_\alpha$, where $c_F$ (resp., $c_\alpha$) is the Lie structure corresponding to $dF$ (resp., $\alpha$), and in terms of brackets,
\begin{equation*} 
[v,w]=[v,w]_0+[v,w]_1, \quad v,w\in V
\end{equation*}
where $[\,\cdot\, , \,\cdot\,]$ (resp., $[\,\cdot\, , \,\cdot\,]_0$, $[\,\cdot\, , \,\cdot\,]_1$) is the Lie bracket on $V$ corresponding to $c$ (resp., $c_F$, $c_\alpha$).\par
%Conversely, when trying to \virg{assemble} a Lie structure out of such basic components,  one finds out that such a process is not always successful. This motivates the next  definition, whose first appearance can be traced back to \cite{MarmoVilasiVinogradov1998}.
%From the opposite point of view,   $c_F$   can be \virg{assembled} with a purely non--unimodular one $c_\alpha$, only if $c_F$ and $c_\alpha$ are \emph{compatible} in the following sense.
Recall the following 
\begin{definition}
Elements $c_1, c_2\in \Lie(V)$ are said to be \emph{compatible} if $c_1+c_2\in \Lie(V)$.
\end{definition}
So, the unimodular part $c_F$ of $c$ and its purely non--unimodular part $c_\alpha$ are compatible.\par
Disassembling \REF{eqDisassemblaggioCanonico} can also be read as $\alpha_c=\pi_0(\alpha_c)+\alpha$, where
\begin{equation}\label{eqPiZero}
V\otimes_\F{\textstyle\bigwedge^2(V^*)}\stackrel{ \pi_0}{\longrightarrow} \Lie_0(V)
\end{equation}
  is the canonical projection   of bilinear forms onto symmetric ones. %Such a point of view will be used in the sequel.

\begin{remark}
%In higher dimensions, the disassembling is still possible, but not canonical. 
The possibility to identify Lie structures as a bilinear forms is a peculiarity of the three--dimensional case only.%, in which unimodular structures corresponds to symmetric forms. 
\end{remark}

The compatibility condition of two Lie structures are, obviously, expressed in terms of their unimodular and purely non--unimodular part as follows.
\begin{lemma}
Lie structures $c_{dF+\alpha}$ and $c_{dG+\beta}$    are compatible if and only if
\begin{equation}
\Nh{c_F}{c_\beta}+\Nh{c_G}{c_\alpha}=0, 
\end{equation}
or, equivalently,
\begin{equation}
dF\wedge\beta+dG\wedge\alpha=0.
\end{equation}
\end{lemma}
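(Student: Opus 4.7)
The strategy is to pass to the Poisson bi-vector picture of Proposition~\ref{propLiePoisson} and polarize. Compatibility of $c_1=c_{dF+\alpha}$ and $c_2=c_{dG+\beta}$ asserts that $P^{c_1}+P^{c_2}$ is Poisson, i.e.\ $\Nh{P^{c_1}+P^{c_2}}{P^{c_1}+P^{c_2}}=0$. Expanding the self-bracket and using $\Nh{P^{c_i}}{P^{c_i}}=0$ (each $c_i$ is already Lie) collapses this to the bilinear condition $\Nh{P^{c_1}}{P^{c_2}}=0$.

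Next I would decompose each $c_i$ into its unimodular and purely non-unimodular parts via \REF{eqDisassemblaggioCanonico} and expand the cross-bracket by bilinearity of the Schouten bracket:
\begin{equation*}
\Nh{c_F+c_\alpha}{c_G+c_\beta}=\Nh{c_F}{c_G}+\Nh{c_F}{c_\beta}+\Nh{c_\alpha}{c_G}+\Nh{c_\alpha}{c_\beta}.
\end{equation*}
The \emph{same-type} cross terms vanish automatically: $dF+dG=d(F+G)$ is symmetric and $\alpha+\beta$ is skew, so by the Corollary $c_F+c_G\in\Lie_0(V)$ and $c_\alpha+c_\beta\in N$ are themselves Lie structures. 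Replaying the polarization argument on each of these diagonal pairs yields $\Nh{c_F}{c_G}=0$ and $\Nh{c_\alpha}{c_\beta}=0$, leaving exactly $\Nh{c_F}{c_\beta}+\Nh{c_G}{c_\alpha}=0$.

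For the equivalent 1-form statement I would polarize Lemma~\ref{lmLemmaIntegrabilit�} itself: the assignment $P\mapsto\Nh{P}{P}$, identified with $\alpha_P\wedge d\alpha_P$ via the volume $\boldsymbol{v}$, is quadratic in $P$, so its symmetric bilinearization produces
\[
i_{\Nh{P_1}{P_2}}\boldsymbol{v}\;\propto\;\alpha_1\wedge d\alpha_2+\alpha_2\wedge d\alpha_1.
\]
Applied to each of the two surviving cross-brackets the first slot is occupied by an exact form ($dF$ or $dG$), which annihilates one of the two summands; the remaining pieces combine, after standard exterior-algebra bookkeeping in dimension three, into the stated identity $dF\wedge\beta+dG\wedge\alpha=0$.

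The main obstacle is this last translation. The conceptual steps (Proposition~\ref{propLiePoisson}, bilinearity, vanishing of same-type terms) are forced; what genuinely has to be worked out is the form-level polarization of Lemma~\ref{lmLemmaIntegrabilit�} with the correct sign and normalization, together with a careful tracking of signs to confirm that the surviving 3-form really pins down the stronger $2$-form condition $dF\wedge\beta+dG\wedge\alpha=0$ and not merely its exterior derivative.
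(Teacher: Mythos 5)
Your argument for the first displayed condition is correct and is, as far as one can tell, exactly what the author intends: the paper offers no proof at all (the lemma is introduced with \virg{obviously}), and the natural route is the double polarization you use --- first of $\Nh{P}{P}=0$ to reduce compatibility of two Lie structures to $\Nh{P^{c_1}}{P^{c_2}}=0$, then of the decomposition $c_i=c_{F}+c_{\alpha}$, with the same--type cross terms killed because $\Lie_0(V)$ and $N$ are linear subspaces of $\Lie(V)$. (Both polarizations use $\mathrm{char}\,\F\neq 2$, which the paper assumes.)

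Your hesitation about the second displayed condition is justified, and you should not try to push the \virg{exterior--algebra bookkeeping} through to the literal identity $dF\wedge\beta+dG\wedge\alpha=0$: it is false as written. Polarizing the integrability criterion $\alpha\wedge d\alpha=0$ gives, as you say, $dF\wedge d\beta+dG\wedge d\alpha=0$ (the terms $\beta\wedge d(dF)$ and $\alpha\wedge d(dG)$ vanish), and since $dF\wedge d\beta+dG\wedge d\alpha=-d(dF\wedge\beta+dG\wedge\alpha)$, the 3--form condition only says that the quadratic 2--form $dF\wedge\beta+dG\wedge\alpha$ is \emph{closed}, not that it vanishes. A concrete counterexample inside the paper's own computations: take $F=\frac{1}{2}x_1^2$, $\alpha=0$, $G=0$, $\beta=\alpha_2=x_3dx_1-x_1dx_3$. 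By Lemma~\ref{lemCNC} (or Proposition~\ref{propProposizioneCommutatoriMisti}) the structures $c_F$ and $c_{\alpha_2}$ are compatible and $\Nh{c_F}{c_{\alpha_2}}=0$, yet
\begin{equation*}
dF\wedge\beta+dG\wedge\alpha=x_1dx_1\wedge(x_3dx_1-x_1dx_3)=-x_1^2\,dx_1\wedge dx_3\neq 0.
\end{equation*}
The second display of the lemma (and likewise the condition $dF\wedge\alpha=0$ in the list of equivalent compatibility conditions, and $dx_i^2\wedge\alpha_j=0$ in the proof of Proposition~\ref{propProposizioneCommutatoriMisti}) is missing exterior derivatives; the intended statement is $dF\wedge d\beta+dG\wedge d\alpha=0$, and with that correction your proof is complete.
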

The following fact is obvious as well.
\begin{lemma}\label{lemLemmaStupidoMaUtile}
Let $P$ and $Q$ be commuting bi--vectors, and $v,w\in V$, Then $\Nh{vP}{wQ}=vP(w)\wedge Q-wQ(v)\wedge P$.
\end{lemma}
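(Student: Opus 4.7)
The identity is a direct consequence of the graded Leibniz rule for the Schouten bracket once the commuting hypothesis $\Nh{P}{Q}=0$ is used. The plan is to view $v$ and $w$ as degree--$0$ elements of $S(V)\otimes_\F\bigwedge^\bullet(V^*)$, so that $vP=v\wedge P$ and $wQ=w\wedge Q$, and to expand $\Nh{v\wedge P}{w\wedge Q}$ by iterated Leibniz.

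Concretely, I would first apply the Leibniz rule in the left slot to pull $v$ out of $\Nh{vP}{wQ}$, obtaining two summands involving $v\,\Nh{P}{wQ}$ and $\Nh{v}{wQ}\wedge P$; then apply Leibniz once more in the right slot of each. Four a priori nonzero terms appear, involving the brackets $\Nh{v}{w}$, $\Nh{P}{Q}$, $\Nh{P}{w}$ and $\Nh{v}{Q}$. Two of them drop out immediately: $\Nh{v}{w}=0$ since the Schouten bracket of two functions is zero, and $\Nh{P}{Q}=0$ by the commuting hypothesis. The two surviving terms are identified, via the formula $\Nh{P}{f}=-i_{df}(P)=P_f$ used already earlier in the paper together with graded antisymmetry, with the vector fields $P(w)$ and $Q(v)$, and they reassemble into $vP(w)\wedge Q-wQ(v)\wedge P$.

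The only real obstacle is the bookkeeping of signs: each application of Leibniz, the graded antisymmetry $\Nh{A}{B}=-(-1)^{(|A|-1)(|B|-1)}\Nh{B}{A}$, and the identification of $\Nh{\cdot}{f}$ with minus the contraction against $df$ all carry parity factors that must be kept mutually consistent. Once a convention is fixed, every sign is forced and no further geometric input is needed; the lemma is a purely algebraic manifestation of the derivation property of the Schouten bracket against the wedge product.
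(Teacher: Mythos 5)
Your argument is correct, and it is essentially the only content this lemma has: the paper gives no proof at all (the statement is introduced with ``The following fact is obvious as well''), so the implicit justification is precisely the bi--Leibniz expansion you describe. Expanding $\Nh{v\wedge P}{w\wedge Q}$ by the graded Leibniz rule in each slot yields the four terms you list; $\Nh{v}{w}=0$ because the Schouten bracket of two functions vanishes, $\Nh{P}{Q}=0$ is the commuting hypothesis, and the two survivors are (up to signs) $v\,\Nh{P}{w}\wedge Q$ and $w\,\Nh{Q}{v}\wedge P$, which become the stated right--hand side once $\Nh{P}{w}=P_w=-i_{dw}(P)$ is substituted. The one genuine loose end --- which you correctly identify but do not resolve --- is the sign calibration: the relative minus sign in $vP(w)\wedge Q-wQ(v)\wedge P$ and the precise meaning of $P(w)$ (whether it denotes $i_{dw}P$ or its negative $P_w$) depend on the conventions chosen for the Schouten bracket and for contraction, and these are not fully pinned down in the paper. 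The cleanest way to finish is to fix your conventions and then check the resulting formula against the explicit computation $\Nh{c_{x_1dx_1}}{c_{x_2dx_3-x_3dx_2}}=2x_1\boldsymbol{\xi}$ carried out in the proof of Proposition \ref{propProposizioneCommutatoriMisti}, which is exactly the computation the lemma is meant to systematize; once that single example comes out right, every sign in your derivation is forced and the proof is complete.
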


\section{Finite and infinitesimal $\GL(V)$--actions}

Fix an   {automorphism $\psi$}$\in\GL(V)$. The adjoint to $\psi$ map is a   {diffeomorphism of $V^*$}, which we still denote by $\psi$. Indeed, $\psi$, the diffeomorphism,   corresponds (in the sense of \cite{Jet})  to the  algebra automorphism of $S(V)$ whose restriction to $V$ coincides with $\psi$, the automorphism.\par
Then the action of $\psi$ is naturally prolonged to differential forms and multi--vector fields on $V^*$, and, in view of isomorphisms \REF{eqIdCV} and \REF{eqIdentificazioneForme}, to the algebras $S(V)\otimes_\F\bigwedge^\bullet (V^*)$ and $S(V)\otimes_\F\bigwedge^\bullet (V)$, respectively. We keep the same symbol $\psi$ for the prolonged automorphism, except for differential forms, when   the pull-back $\psi^*$ is used.\par     An easy consequence of Lemma \ref{lmLemmaIntegrabilitˆ}   is that the action of $\GL(V)$ on linear differential 1--forms restricts to   $\Lie(V)$. In terms of Lie brackets this action reads
	\[
	[v,w]'\df \psi^{-1}([\psi(v),\psi(w)]),\quad v,w\in V,
\]
where  $[\,\cdot\, , \,\cdot\,]$   (resp., $[\,\cdot\, , \,\cdot\,]'$) corresponds to $\alpha_c$, (resp., $\psi^*(\alpha_c)$). It is straightforward to verify that $P^{\psi(c)}=\psi(P^c)$. \par 
%Since in Poicar\'e duality we tacitly identified three--vectors with scalars, the action of $\psi$ on bracket coincides  with the action of $\psi$ on differential forms, up to a factor $\det\psi$,
\begin{remark}\label{remAzione}
The identification $\alpha_c\leftrightarrow P^c$ of linear 1--forms with linear bi--vector does not commute with   actions of $\GL(V)$ on them. Namely, we have
\begin{equation*}
%\psi^*(q_c)\cdot\det\psi=q_{\psi(c)},\quad \psi\in\GL(V).
\psi^*(\alpha_c)\cdot\det\psi=\alpha_{\psi(c)},\quad \psi\in\GL(V).
\end{equation*}
\end{remark}

%To perform our analysis of the moduli space $\frac{\Lie(V)}{\GL(V)}$, we shall consider the stabilizer of a Lie structure $c$, i.e., the subgroup
Denote by
	$
	\Stab(c)\df \{\psi\in\GL(V)\ |\  {\psi(c)=c}\}\subseteq\GL(V)
$
  the \emph{stabilizer} of $c$.\par
An  {endomorphism $\varphi$}$\in\End(V)$, i.e., a   {linear vector field} on $V^*$ (see    \REF{eqLVF}), can be interpreted as an \emph{infinitesimal automorphism} and, as such, it acts on tensor fields on $V^*$ by Lie derivation.     On the other hand, the  differential $d_{P^c}$  (see   \REF{eqDiPi})
%operator on    {linear} multi--vector fields (see Remark \ref{rem1}) 
%	\[
%V\otimes_\F \bigwedge(V^*)\stackrel{ d_{P^c} }{\longrightarrow} V\otimes_\F \bigwedge(V^*)
%\]
acts on $\varphi$ and produces $ d_{P^c} (\varphi)$. It is easy to verify that  
$
L_{X_\varphi}(\alpha_c)=\alpha_{d_{P^c}(\varphi)}.
$ 
The infinitesimal counterpart of the stabilizer is the \emph{symmetry    Lie sub--algebra}
	$$
	\sym(c)\df \{\varphi\in\End(V)\ |\ L_{X_\varphi}(c)=0\}\subseteq\End(V).
$$

\begin{remark}
Notice that  $ d_{P^c} (\varphi)$ is a linear bi--vector field on $V^*$, but not necessarily a Poisson one.
\end{remark}

We conclude this section by collecting basic facts about the cohomology of Lie structures (see \cite{G1} for more details), which will be used to describe the orbits of the Bianchi variety.\par
  A linear bi--vector field $P$ such that  {$ d_{P^c} (P)=0$} is called a \emph{2--cocyle of $c$}. These cocycles form  a subspace $Z^2(c)$ in $\End(V)$.    
  A linear bi--vector field $P$ such that  {$P= d_{P^c} (\varphi)$}, for some endomorphism $\varphi$, is called a \emph{2--coboundary of $c$}. The totality of 2--coboundaries is a  subspace of $Z^2(c)$ denoted by $B^2(c)$. The quotient space $H^2(c)\df \frac{Z^2(c)}{B^2(c)}$ is called the \emph{2--cohomology of $c$}.\par

 Intuitively, the tangent space at $c$ to 
    {$\Lie(V)$} may be taught as  the affine subspace parallel to \emph{$Z^2(c)$} and passing through $c$. Similarly, the tangent space at $c$ to     {$\GL(V)\cdot c$} may be viewed as the affine subspace parallel to \emph{$B^2(c)$} and passing through $c$.
  So, in \virg{smooth} points of $\Lie(V)$, we can interpret   $\dim Z^2(c)$ as the  {dimension of $\Lie(V)$ at $c$},       $\dim B^2(c)$ as  {the dimension of the orbit of $c$},   and  the difference $\dim Z^2(c) - \dim B^2(c)=\dim H^2(c)$ as its  {co--dimension}.

\section{The Canonical Disassembling of a 3--Dimensional Lie Structure}\label{secSmontaggioCanonico}

Firstly observe that $\GL(V)$  preserves the fibers of the projection $\pi_0$ of $V\otimes_\F V$ over $ \Lie_0(V)$.

%Since $\Lie_0(V)$ and $N$ are  $\GL(V)$--submodules,   the action of $\GL(V)$  {preserves the fibers of the projection  $\pi_U:V\otimes_\F\bigwedge^2(V^*)\longrightarrow U$}. [SPOSTARE PRIMA LA DEFINIZIONE DI QUESTA PROIEZIONE]

%\begin{definition}
%The sub--bundle 
%	\[
%	\zeta\df \left.\left(\pi_U\right)\right|_{\Lie(V)}
%\]
%is called the bundle of \emph{non--unimodular 2--cocycles}.
%\end{definition}
%\begin{definition}
Put $Z_N^2(dF)\df Z^2(c_F)\cap N$. 
%is called the space of \emph{non--unimodular  2--cocycles of $dF$}.
%   \end{definition}
The following assertion is a direct consequence of the above definitions.
   \begin{proposition}\label{corCompatibilitˆUN}
In the above notation the following conditions are equivalent:
\begin{itemize}
\item $dF+\alpha$ corresponds to a Lie structure,
\item  $c_F$ and $c_\alpha$ are compatible,
\item   $\Nh{c_F}{c_\alpha}=0$,
\item  $dF\wedge\alpha=0$,
\item $\alpha\in Z_N^2(dF)$.
\end{itemize}
\end{proposition}
An easy consequence of Proposition \ref{corCompatibilitˆUN} is the following
\begin{lemma}
$Z_N^2(dF)=\zeta^{-1}(c_F)$, with $\zeta\df \left.\left(\pi_0\right)\right|_{\Lie(V)}$.
\end{lemma}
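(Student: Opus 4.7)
\medskip
\noindent\textbf{Proof plan.} The lemma is essentially a restatement of the preceding Proposition \ref{corCompatibilit�UN} with the domain of definition $\zeta$ spelt out. I will unpack each side of the alleged equality and match them using that proposition.

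First, I would unfold $\zeta^{-1}(c_F)$: by definition, $c\in\zeta^{-1}(c_F)$ means $c\in\Lie(V)$ and $\pi_0(c)=c_F$. Invoking the canonical disassembling \REF{eqDisassemblaggioCanonico}, any $c\in V\otimes_\F\bigwedge^2(V^*)$ splits uniquely as $\alpha_c=\pi_0(\alpha_c)+(\alpha_c-\pi_0(\alpha_c))$ with the second summand in $N$; the condition $\pi_0(c)=c_F$ thus forces $\alpha_c=dF+\alpha$ for a unique $\alpha\in N$, i.e.\ $c=c_F+c_\alpha$. Hence the fiber of $\zeta$ over $c_F$ is in canonical bijection, via the translation $c\leftrightarrow c-c_F=c_\alpha$ (equivalently $\alpha_c\leftrightarrow\alpha$), with the set of those $\alpha\in N$ for which $dF+\alpha$ still corresponds to a Lie structure.

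Second, I would apply Proposition \ref{corCompatibilit�UN} directly to the description obtained in the previous step: the condition \virg{$dF+\alpha$ corresponds to a Lie structure} appears there as one of five equivalent conditions, another of which is precisely \virg{$\alpha\in Z_N^2(dF)$}. This chain of equivalences yields
\[
c=c_F+c_\alpha\in\zeta^{-1}(c_F)\ \Longleftrightarrow\ \alpha\in Z_N^2(dF),
\]
which, under the canonical identification described above (and implicitly adopted in the statement of the lemma), is exactly the asserted equality.

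There is no real obstacle beyond this bookkeeping: the only point that requires a word of comment is the identification of the affine fiber $\zeta^{-1}(c_F)=c_F+Z_N^2(dF)\subset\Lie(V)$ with the linear subspace $Z_N^2(dF)\subset N$; this is the natural translation by $c_F$ that is already implicit in the canonical disassembling and in the notational conventions of Section \ref{secSmontaggioCanonico}. Once this is acknowledged, the proof is a one-line reference to Proposition \ref{corCompatibilit�UN}.
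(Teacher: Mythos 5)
Your proof is correct and follows exactly the route the paper intends: the paper states this lemma without proof as ``an easy consequence of Proposition \ref{corCompatibilit�UN}'', and your unpacking of $\zeta^{-1}(c_F)$ via the canonical disassembling followed by the equivalence ``$dF+\alpha$ is a Lie structure $\Leftrightarrow$ $\alpha\in Z_N^2(dF)$'' is precisely that argument made explicit. Your remark about identifying the affine fiber $c_F+Z_N^2(dF)$ with the linear space $Z_N^2(dF)$ is a fair point of care that the paper leaves implicit.
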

 Note that the map $\zeta$ is not of constant--rank. Namely, the dimension of $\zeta^{-1}(c_F)$ depends on the rank of the polynomial $F$.
% 
% 
%  The next step is to show that the number of independent purely non--unimodular charges which can be attached to a given unimodular structure $dF$ equals
It should be stressed that $\zeta^{-1}(c_F)$ is naturally interpreted as a variety of purely non--unimodular  structures compatible with $dF$. We shall show that its dimension equals   $3-\rank (dF)$.\par
To this end, we compute the Schouten brackets between the basis elements $\{x_idx_j\}_{i,j=1,2,3}$ of $\Lie_0(V)$ and the purely non--unimodular Lie structures  $\alpha_i\df \varepsilon_i^{\ i_1i_2}x_{i_1}dx_{i_2}$, i.e., the basis elements of $N$.\par We usually write ${\textstyle\frac{1}{2}}d(x_i^2)$ instead of $x_idx_i$, $i=1,2,3$. 
\begin{proposition}\label{propProposizioneCommutatoriMisti}
\begin{equation*}
\Nh{c_{{\textstyle\frac{1}{2}}d(x_i^2)}}{ c_{\alpha_j}}=\left\{\begin{array}{ll}0 & \textrm{ if } j\neq i , \\2x_j\boldsymbol{\xi} & \textrm{ otherwise;}  \end{array}\right.\quad \Nh{c_{{\textstyle\frac{}{}}dx_{i_1}x_{i_2}}}{ c_{\alpha_j}}=\left\{\begin{array}{ll} 0 & \textrm{ if } j \neq i_1,i_2, \\ 2 x_{i_1}\boldsymbol{\xi}  & \textrm{ if }   j=i_2,  \\ 2 x_{i_2}\boldsymbol{\xi}  & \textrm{ if }   j=i_1. \end{array}\right. 
\end{equation*}
\end{proposition}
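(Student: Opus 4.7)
The plan is to reduce the statement to an elementary Schouten calculus on $V^*$ via the correspondence $c\leftrightarrow P^c$ of Proposition \ref{propLiePoisson}, and then to apply Lemma \ref{lemLemmaStupidoMaUtile} in order to avoid expanding full Schouten differentials by hand.

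I would first write down, using formula (\ref{eqPC3}), the Poisson bi-vectors attached to the basis elements entering the statement. A direct substitution of the coefficients $c_h^k$ read off from $\frac{1}{2}d(x_i^2)=x_i\,dx_i$ and from $d(x_{i_1}x_{i_2})=x_{i_1}dx_{i_2}+x_{i_2}dx_{i_1}$ produces a single decomposable bi-vector of the form $x_i\,\partial_j\wedge\partial_k$ and a sum of two such terms respectively, with $\{i,j,k\}=\{1,2,3\}$. For the skew--symmetric generator $\alpha_j=\varepsilon_j^{\ k\ell}x_k\,dx_\ell$ the would-be contribution $x_j\,\partial_j\wedge\partial_j$ vanishes automatically, and one finds the particularly clean form
\[
P^{c_{\alpha_j}}=2\,\partial_j\wedge\Delta,
\]
where $\Delta$ is the Liouville field. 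This factorisation is the key simplification on which everything else rests.

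Since each of the three bi-vectors above is a finite sum of terms of shape $v\cdot P$ with $v\in V$ a linear function and $P$ of constant coefficients, and since any two constant-coefficient bi-vectors have vanishing Schouten bracket, Lemma \ref{lemLemmaStupidoMaUtile} applies termwise to every bracket of interest. It turns each such bracket into a short sum of expressions $vP(w)\wedge Q-wQ(v)\wedge P$, in which the contractions $P(w)=i_{dw}P$ and $Q(v)$ are literally single coordinate fields $\pm\partial_k$, and the surviving wedges are either zero (when the collision rule $\partial_k\wedge\partial_k=0$ fires) or a scalar multiple of $\partial_1\wedge\partial_2\wedge\partial_3$. A short index census then establishes the dichotomy of the proposition: the bracket vanishes unless the index $j$ of $P^{c_{\alpha_j}}$ is carried by the unimodular factor ($j=i$, respectively $j\in\{i_1,i_2\}$), and in the nonvanishing cases the two Lemma \ref{lemLemmaStupidoMaUtile} contributions add up to the announced multiple of $x_k\,\partial_1\wedge\partial_2\wedge\partial_3$, which under the identification of $3$-vector fields with elements of $V\otimes\bigwedge^3(V^*)$ provided by the duality pair $\boldsymbol{v}\leftrightarrow\boldsymbol{\xi}$ corresponds to $x_k\,\boldsymbol{\xi}$.

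I expect the main bookkeeping obstacle to arise in the mixed unimodular generator $d(x_{i_1}x_{i_2})$, whose bi-vector is a genuine sum of two terms and so creates cross contributions against both summands of $\partial_j\wedge\Delta$; by the $(i_1\leftrightarrow i_2)$-symmetry of $d(x_{i_1}x_{i_2})$ the verification collapses to the two subcases $j=i_1$ and $j\notin\{i_1,i_2\}$, each a one-line application of the lemma.
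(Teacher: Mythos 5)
Your argument is correct and, for the non--vanishing brackets, is essentially the paper's own computation: both decompose each bi--vector into terms of the form $v\cdot P$ with $v$ linear and $P$ of constant coefficients, and then apply Lemma \ref{lemLemmaStupidoMaUtile} termwise (the commutation hypothesis of that lemma holds precisely because constant--coefficient bi--vectors Schouten--commute, as you note). The only divergence is in the vanishing cases: the paper does not compute those at all, but observes that $d(x_i^2)\wedge\alpha_j=0$ for $j\neq i$ and $d(x_{i_1}x_{i_2})\wedge\alpha_j=0$ for $j\neq i_1,i_2$, and then invokes the compatibility criterion of Section 3, namely the equivalence of $\Nh{c_F}{c_\alpha}=0$ with $dF\wedge\alpha=0$, whereas you run the same termwise Schouten computation and let the collisions $\frac{\partial}{\partial x_k}\wedge\frac{\partial}{\partial x_k}=0$ kill everything. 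Both routes are sound; the paper's is shorter for the zero cases, while yours is uniform across all cases and yields the pleasant by--product $P^{c_{\alpha_j}}=2\,\frac{\partial}{\partial x_j}\wedge\Delta$, which the paper never writes down but which makes your final index census transparent (and also explains structurally why the bracket can only survive when $j$ matches an index of the unimodular factor). Your closing \virg{index census} is left at the same level of detail as the paper's \virg{similarly one computes the remaining commutators}, so no gap is introduced.
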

\begin{proof} 
From $d\alpha_j=  \varepsilon_j^{\ i_1i_2}  dx_{i_1}\wedge dx_{i_2}$ it follows that $dx_i^2\wedge\alpha_j=0$ when $j\neq i$ and $dx_{i_1}x_{i_2}\wedge\alpha_j=0$ when $ j \neq i_1,i_2$. Then,  in view of Corollary \ref{corCompatibilitˆUN}, this gives the result for $i\neq j$ and for $i\neq i_1,i_2$.  \par
Next, by using Lemma \ref{lemLemmaStupidoMaUtile} we have
\begin{align*}
\Nh{c_{x_1dx_1}}{c_{x_2dx_3-x_3dx_2}} &= \Nh{x_1\xi^2\wedge\xi^3}{x_2\xi^1\wedge\xi^2-x_3\xi^3\wedge\xi^1}\\
&={x_1\xi^3\wedge\xi^1\wedge\xi^2+x_1\xi^2\wedge\xi^3\wedge\xi^1}=2x_1\boldsymbol{\xi}.
\end{align*}
%With the same technique, compute $\Nh{dx_1x_2}{\alpha_1}$:
%\begin{align*}
%\Nh{x_1dx_2+x_2dx_1}{x_2dx_3-x_3dx_2} &=\Nh{x_1dx_2}{x_2dx_3}-\Nh{x_1dx_2}{x_3dx_2}\\&+\Nh{x_2dx_1}{x_2dx_3}-\Nh{x_2dx_1}{x_3dx_2}\\
%&=\Nh{x_1\xi^3\wedge\xi^1}{x_2\xi^1\wedge\xi^2}-\Nh{x_1\xi^3\wedge\xi^1}{x_3\xi^3\wedge\xi^1}\\&+\Nh{x_2\xi^2\wedge\xi^3}{x_2\xi^1\wedge\xi^2}-\Nh{x_2\xi^2\wedge\xi^3}{x_3\xi^3\wedge\xi^1}\\
%&=-x_2\xi^2\wedge\xi^3\wedge\xi^1+x_2\xi^3\wedge\xi^1\wedge\xi^2\\
%&-x_2(-\xi^1)\wedge\xi^2\wedge\xi^3-x_2(-\xi^2)\wedge\xi^3\wedge\xi^1\\
%&=2x_2
%\end{align*}
%And $\Nh{dx_1x_2}{\alpha_2}$:
%\begin{align*}
%\Nh{x_1dx_2+x_2dx_1}{x_3dx_1-x_1dx_3} &=\Nh{x_1dx_2}{x_3dx_1}-\Nh{x_1dx_2}{x_1dx_3}\\&+\Nh{x_2dx_1}{x_3dx_1}-\Nh{x_2dx_1}{x_1dx_3}\\
%&=\Nh{x_1\xi^3\wedge\xi^1}{x_3\xi^2\wedge\xi^3}-\Nh{x_1\xi^3\wedge\xi^1}{x_1\xi^1\wedge\xi^2}\\&+\Nh{x_2\xi^2\wedge\xi^3}{x_3\xi^2\wedge\xi^3}-\Nh{x_2\xi^2\wedge\xi^3}{x_1\xi^1\wedge\xi^2}\\
%&=x_1\xi^1\wedge\xi^2\wedge\xi^3-x_1 (-\xi^3)\wedge\xi^1\wedge\xi^2 \\
%&- (-x_1\xi^2\wedge\xi^3\wedge\xi^1)-(-x_1(-\xi^1)\wedge\xi^2\wedge\xi^3)\\
%&=2x_1
%\end{align*}
Similarly one computes the remaining commutators.
%The remaining relations are obtained by cyclically permute basis elements.
\end{proof}
\begin{lemma}\label{lemCNC}
$ {\codim} Z_N^2(dF)= {\rank}(dF)$.
       \end{lemma}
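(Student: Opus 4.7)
The plan is to recognize $Z_N^2(dF)$ as the kernel of an explicit linear map from $N$ to a $3$-dimensional target and show directly that this map has rank exactly $\rank(dF)$.

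First, by the preceding Proposition characterizing membership in $Z_N^2(dF)$, an element $\alpha\in N$ lies in $Z_N^2(dF)$ precisely when $\Nh{c_F}{c_\alpha}=0$. Since the Schouten bracket is bilinear, fixing $F$ produces a linear map
\[
\phi_F:N\longrightarrow S_1(V)\cdot\boldsymbol{\xi},\qquad \alpha\longmapsto\Nh{c_F}{c_\alpha}.
\]
The codomain is $3$-dimensional, spanned by $x_1\boldsymbol{\xi},\,x_2\boldsymbol{\xi},\,x_3\boldsymbol{\xi}$, because $c_F$ and $c_\alpha$ both have bidegree $(1,2)$ and hence the bracket has bidegree $(1,3)$. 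Since $Z_N^2(dF)=\ker\phi_F$, it suffices to compute $\rank\phi_F$.

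Second, I will read off the matrix of $\phi_F$ in the bases $\{\alpha_1,\alpha_2,\alpha_3\}$ of $N$ and $\{x_k\boldsymbol{\xi}\}_{k=1,2,3}$ of the codomain. Writing the symmetric form underlying $F$ as $F=\tfrac12\sum_{i,j}a_{ij}x_ix_j$ with $A=(a_{ij})$ a symmetric matrix, I expand
\[
dF \;=\; \sum_i a_{ii}\cdot\tfrac12\,d(x_i^2)\;+\;\sum_{i<j}a_{ij}\cdot d(x_ix_j)
\]
in the natural basis of $\Lie_0(V)$ and apply Proposition \ref{propProposizioneCommutatoriMisti} term by term. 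A short bookkeeping based on the case analysis in that proposition gives
\[
\phi_F(\alpha_j) \;=\; 2\sum_i a_{ij}\,x_i\,\boldsymbol{\xi},
\]
so the matrix of $\phi_F$ in the chosen bases is precisely $2A$.

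Third, since $\rank\phi_F=\rank A=\rank(dF)$, we conclude $\codim Z_N^2(dF)=\rank\phi_F=\rank(dF)$. I do not anticipate a genuine obstacle: once the correct linear map and its codomain are pinned down, Proposition \ref{propProposizioneCommutatoriMisti} supplies every matrix entry on a silver platter. The only care required is tracking the factor $2$ and deciding, for each $(i,j)$, which case of the proposition applies when expanding $\phi_F(\alpha_j)$.
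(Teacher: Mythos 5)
Your proof is correct and follows essentially the same route as the paper's: both identify $Z_N^2(dF)$ as the kernel of the linear map $\alpha\mapsto\Nh{c_F}{c_\alpha}$ into the $3$--dimensional space $S_1(V)\boldsymbol{\xi}$ and compute its rank via Proposition \ref{propProposizioneCommutatoriMisti}. The only (harmless) difference is that the paper first reduces to a diagonal $F=\frac{1}{2}(\lambda x_1^2+\mu x_2^2+\nu x_3^2)$, whereas you keep a general symmetric matrix $A$ and read off the full matrix $2A$ of the map, which makes the reduction to diagonal form unnecessary.
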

\begin{proof}
Let $F={\textstyle\frac{1}{2}}(\lambda x_1^2 +\mu x_2^2+\nu x_3^2)$ and $\alpha=a\alpha_1+b\alpha_2+c\alpha_3$. Then
\[ \Nh{c_F}{c_\alpha}=(2\lambda  {a} x_1+2\mu \emph{b} x_2+2\nu  {c} x_3)\boldsymbol{\xi}
\]  is zero if and only if  {the $\F$--valued vector 
 $
(\lambda a, \mu b, \nu c)
$
 vanishes}.
\end{proof}
\begin{figure}\caption{The Bianchi variety.}\label{BV}
\epsfig{file=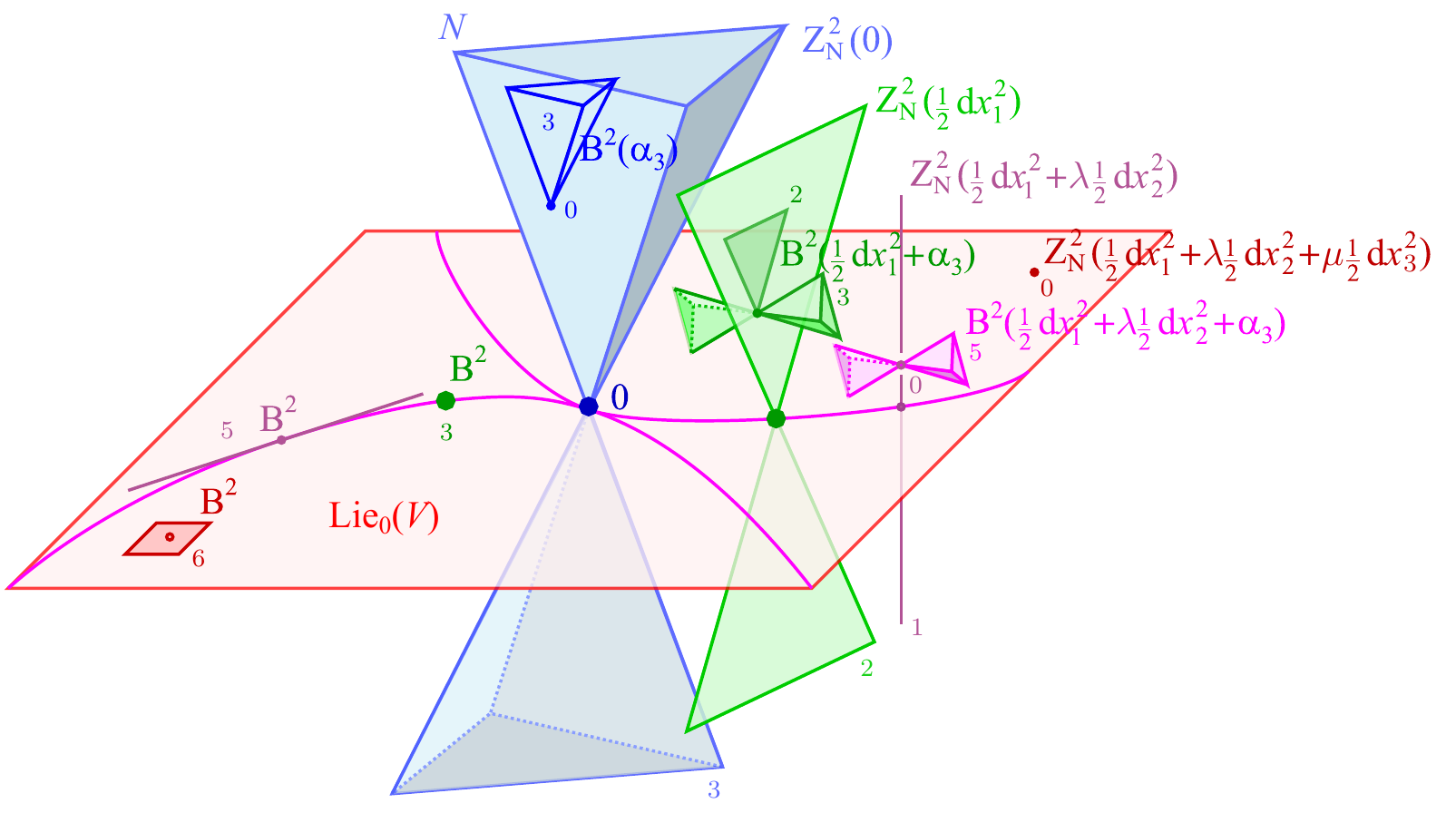, width=11.5cm}
\end{figure}
Figure \ref{BV}   visualizes   Lemma \ref{lemCNC}. The four \virg{vertical} linear spaces, crossing the \virg{horizontal} plane $\Lie_0(V)$, represent the $\zeta$--fibers attached to the rank--0 Lie structure (blue point), to a rank--1 structure (green point), to a rank--2 structure (purple poin), and to a non--degenerate structure (red point).\par
The disassembling property of   Lie structures leads  to a natural factorization of the action of $\GL(V)$ on $\Lie(V)$.  Namely,  $\GL(V)$ preserves $\zeta$.
%, we obtain a \virg{coarse} action, which just shuffles the fibers of $\zeta$. The \virg{fine} action is obtained by fixing a fiber, and acting on it by the stabilizer of its base point.  In such a perspective, 
In view of that, the study of the moduli space $\frac{\Lie(V)}{\GL(V)}$ naturally splits into two steps. The  first of them is to describe  the moduli space of the symmetric bilinear forms (which is well--known for some fields $\F$), while the second is to describe the moduli space $\frac{Z^2_N(dF)}{\Stab(dF)}$.\par
%  then the moduli space of the non--unimodular 2--cocycles (which is independent on the field $\F$).\par
To this end
%, we introduce the   \emph{stabilizer bundle}, i.e.,  the sub--bundle $\sigma:\Sigma\longrightarrow U$ of the trivial bundle $U\times\GL(V)\longrightarrow U$, where 
consider the subvariety $
\Sigma\df\{(dF,\psi)\ |\ \psi\in\Stab(dF)\}\subseteq\Lie_0(V)\times\GL(V)$ and its natural projection $\sigma:\Sigma\longmapsto\Lie_0(V)$, $(dF,\psi)\longmapsto dF$. 
Now fix an orbit \emph{$\Omega\df \GL(V)\cdot dF$} of the $\GL(V)$--action on $\Lie_0(V)$ (see Remark \ref{remAzione}). Lemma \ref{lemCNC} tells precisely  that   $\zeta|_\Omega$ is a $(3-\rank dF)$--dimensional  {vector bundle} over $\Omega$. \par
Observe that  $\sigma|_\Omega$ is a principal  group bundle over $\Omega$, acting on $\zeta|_\Omega$. 
\begin{lemma}
The quotient bundle $\frac{\zeta|_\Omega}{\sigma|_\Omega}$ is endowed with  {an absolute parallelism} and, therefore, it is  {trivial}.
\end{lemma}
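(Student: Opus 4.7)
The plan is to exploit the $\GL(V)$-equivariance of $\zeta|_\Omega$ together with transitivity of the $\GL(V)$-action on the base $\Omega$ in order to produce a canonical identification between any two fibers of the quotient $\zeta|_\Omega/\sigma|_\Omega$. Once such a canonical identification exists for every pair of points, it \emph{is} an absolute parallelism, and triviality of the bundle follows tautologically.

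First I would fix a reference point $dF_0\in\Omega$ and note that, because $\GL(V)$ preserves $\pi_0$ (hence $\zeta$) and acts transitively on $\Omega$, any $\psi\in\GL(V)$ with $\psi(dF_0)=dF$ restricts, via its prolonged action on linear 1--forms, to a linear isomorphism $\psi\colon Z^2_N(dF_0)\longrightarrow Z^2_N(dF)$. Two such choices $\psi,\psi'$ differ by an element of the stabilizer: $\psi'=\psi\circ h$ for some $h\in\Stab(dF_0)$. Next, I would descend to the quotient. Since $\Stab(dF_0)$ acts on $Z^2_N(dF_0)$ and the ambiguity $h$ lies precisely in this group, the induced map on orbit spaces
\[
Z^2_N(dF_0)/\Stab(dF_0)\longrightarrow Z^2_N(dF)/\Stab(dF)
\]
is independent of the choice of $\psi$. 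This furnishes the sought canonical identification of each fibre of $\zeta|_\Omega/\sigma|_\Omega$ with the reference fibre $Z^2_N(dF_0)/\Stab(dF_0)$, i.e. an absolute parallelism, and thus a trivialization $\zeta|_\Omega/\sigma|_\Omega\cong\Omega\times\bigl(Z^2_N(dF_0)/\Stab(dF_0)\bigr)$.

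The main thing to verify, and the only place a genuine check is required, is the cocycle-type compatibility of the identifications: that the composite $dF_0\to dF\to dF'$ coincides with the direct identification $dF_0\to dF'$. This is immediate from the group law in $\GL(V)$, since if $\psi_1(dF_0)=dF$ and $\psi_2(dF)=dF'$, then $\psi_2\circ\psi_1$ is itself a valid representative of the direct identification, and both sides reduce to the class of $(\psi_2\circ\psi_1)$ modulo $\Stab(dF_0)$. A minor subtlety — which I would spell out but expect to be routine — is that the $\GL(V)$-action on $\zeta$-fibres used above must be the one induced on $\Lie(V)$, not on linear 1--forms, in view of Remark \ref{remAzione}; since the discrepancy is only by the scalar $\det\psi$, it acts trivially on the projectivization and, in particular, descends harmlessly to the quotient by $\Stab(dF)$.
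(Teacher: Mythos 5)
Your argument is essentially the paper's: the paper defines the transport pairwise, $t(\Stab(dF)\cdot(dF+\alpha))\df\Stab(dG)\cdot(dG+\varphi^*(\alpha))$, and checks independence of both the representative $\alpha$ and the choice of $\varphi$ by conjugating the ambiguity into $\Stab(dG)$ --- exactly your descent-to-the-quotient step, with your fixed-basepoint formulation replaced by a two-point one. The only claim I would not let stand is the closing aside: a nonzero scalar does \emph{not} in general act trivially on $Z_N^2(dF)/\Stab(dF)$ (in the rank--2 case this quotient is the whole line $\langle\alpha_3\rangle$ by Proposition \ref{propFibraRangoDue}, so it is finer than the projectivization), but the aside is dispensable, since the well-definedness argument works verbatim for whichever of the two actions of Remark \ref{remAzione} one fixes once and for all.
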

\begin{proof} Take $dF, dG\in\Omega$, and choose $\varphi\in\GL(V)$ such that $dG=\varphi^*(dF)$. Define parallel displacement   $t:\left(\frac{\zeta|_\Omega}{\sigma|_\Omega}\right)^{-1}(dF)\longrightarrow\left(\frac{\zeta|_\Omega}{\sigma|_\Omega}\right)^{-1}(dG)$,
\begin{equation}\label{eqDefParTr}
t(\Stab(dF)\cdot (dF+\alpha))\df\Stab(dG)\cdot(dG+\varphi^*(\alpha)),
\end{equation}
and prove that \REF{eqDefParTr} does not depend on the choice of $\alpha$ and $\varphi$.\par
If $\alpha'$ is another choice of the non--unimodular charge of the orbit of $dF+\alpha$, then $\alpha'=\phi^*(\alpha)$, with $\phi\in\Stab(dF)$. So, $\varphi^{-1}\phi\varphi\in\Stab(dG)$ implies that $\Stab(dG)\cdot (dG+\varphi^*(\alpha))=\Stab(dG)\cdot (dG+(\varphi^{-1}\phi\varphi)^*(\varphi^*(\alpha)))=\Stab(dG)\cdot (dG+\varphi^*(\phi^*(\alpha)))=\Stab(dG)\cdot (dG+\varphi^*( \alpha'))$.\par
If $\overline{\varphi}$ is another   transformation such that $dG=\overline{\varphi}^*(dF)$, then   $\varphi^{-1}\overline{\varphi}\in\Stab(dG)$. Hence, $\Stab(dG)\cdot (dG+\varphi^*(\alpha))=\Stab(dG)\cdot (dG+(\varphi^{-1}\overline{\varphi})^*(\varphi^*(\alpha)))=\Stab(dG)\cdot (dG+\overline{\varphi}^*(\alpha))$.
 \end{proof}
Let $ c=c_F+c_\alpha$ be a Lie structure.     The orbit $GL(V)\cdot\alpha_c$ of $\alpha_c$ is precisely  {the only parallel section of $\frac{\zeta|_\Omega}{\sigma|_\Omega}$ which takes the value $\Stab(dF)\cdot\alpha$ at the point $dF$}.      
In other words, we have proved the main
\begin{theorem}\label{thTeoremaCentrale}
The orbit space $\frac{\Lie(V)}{\GL(V)}$ is fibered over the orbit space $\frac{S^2(V)}{\GL(V)}$,   the fiber at $\Omega$ being given by  {the set of parallel sections} of $\frac{\zeta|_\Omega}{\sigma|_\Omega}$.
   \end{theorem}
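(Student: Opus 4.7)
My plan is to use the $\GL(V)$-equivariance of $\zeta$ to descend to a projection on orbit spaces, and then identify its fibers with the parallel sections produced by the preceding lemma.

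First I would note that $\zeta=\pi_0|_{\Lie(V)}$ is $\GL(V)$-equivariant, since $\pi_0$ is the projection onto symmetric bilinear forms and $\GL(V)$ preserves the splitting of bilinear forms into symmetric and skew-symmetric parts. Thus $\zeta$ descends to a well-defined map $\bar{\zeta}:\frac{\Lie(V)}{\GL(V)}\longrightarrow\frac{\Lie_0(V)}{\GL(V)}=\frac{S^2(V)}{\GL(V)}$, whose fiber over the class of $\Omega=\GL(V)\cdot dF$ is, tautologically, the set of $\GL(V)$-orbits contained in $\zeta^{-1}(\Omega)$.

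Next I would show that every such orbit $\mathcal{O}=\GL(V)\cdot(dF+\alpha)$ intersects each fiber $\zeta^{-1}(dG)$, $dG\in\Omega$, in exactly one $\Stab(dG)$-orbit. Non-emptiness: if $dG=\varphi^{*}(dF)$, then $\varphi^{*}(dF+\alpha)=dG+\varphi^{*}(\alpha)$ lies in $\mathcal{O}\cap\zeta^{-1}(dG)$. Uniqueness modulo $\Stab(dG)$: two such elements differ by a $\GL(V)$-transformation that fixes $dG$, hence by an element of $\Stab(dG)$. Therefore $\mathcal{O}$ determines a section $s_{\mathcal{O}}$ of $\frac{\zeta|_\Omega}{\sigma|_\Omega}\to\Omega$, given by $dG\longmapsto\Stab(dG)\cdot(dG+\varphi^{*}(\alpha))$; this is literally the parallel displacement \REF{eqDefParTr}, so $s_{\mathcal{O}}$ is parallel by construction.

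Conversely, I would recover a $\GL(V)$-orbit from any parallel section $s$ by fixing some $dF\in\Omega$, choosing a representative $\alpha$ of $s(dF)$, and taking the $\GL(V)$-orbit of $dF+\alpha$; the parallelism of $s$ guarantees independence of the choices and the identity $s_{\mathcal{O}}=s$. The two assignments are mutually inverse, which establishes the bijection claimed. The step I expect to be the main obstacle is the second one, i.e.\ showing that the intersection of $\mathcal{O}$ with a single fiber reduces to a single $\Stab$-orbit rather than a coarser family; but this is essentially the same well-definedness verification already carried out in the proof of the preceding lemma, so the theorem should emerge as a conceptual repackaging of that argument rather than as an independent calculation.
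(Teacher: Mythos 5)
Your proposal is correct and follows essentially the same route as the paper: the paper's own argument consists of the preceding lemma establishing the absolute parallelism via the displacement \REF{eqDefParTr}, followed by the one--line observation that the orbit $\GL(V)\cdot\alpha_c$ is the unique parallel section of $\frac{\zeta|_\Omega}{\sigma|_\Omega}$ taking the value $\Stab(dF)\cdot\alpha$ at $dF$. You merely spell out more explicitly the bijection between $\GL(V)$--orbits in $\zeta^{-1}(\Omega)$ and parallel sections, with the same well--definedness checks.
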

%\begin{remark}
%Recall that the action of $\psi\in\GL(V)$ on $q\in S^2(V)$, being inherited by the action on three--vector--valued differential forms, looks as $q\longmapsto\det\psi\cdot \psi^*(q)$. [GIA' DETTO PRIMA?]
%\end{remark}

So, we have the following algorithm for describing     {orbits of   Lie structures}:    
\begin{enumerate}
\item find the  {orbits of the action of $\GL(V)$ on $\Lie_0(V)$};    
\item find the  {parallel sections of $\frac{\zeta|_\Omega}{\sigma|_\Omega}$}, for any orbit $\Omega$ coming from the first step.
\end{enumerate}    
The evident advantage of this procedure is that the fibers of  $\zeta$ and $\sigma$ are much smaller than $\Lie(V)$ and  $\GL(V)$, respectively. Moreover, as we shall see,    the second step does not depend on the field $\F$. 
 
\begin{remark}
Even in the    case when the orbit space $\frac{S^2(V)}{\GL(V)}$ is not known,  elements of      $\Lie_0(V)$  are distinguished  by their ranks (see \cite{Lam}). Degenerate forms fill up a  {cubic hypersurface} (purple curve in  Fig. \ref{BV}), which in its turn contains a  {closed subset} of  {rank--one} forms (green points in Fig. \ref{BV}). 
\end{remark}

Let $c=c_F+c_\alpha$ be a Lie structure, and $\Omega$ the orbit of $dF$ in $\Lie_0(V)$.
\begin{lemma}\label{lemLemmaCheSembraStupidoMaNonLoEAffatto}
$\zeta|_{\GL(V)\cdot \alpha_c}$ is a   bundle over $\Omega$ with the fiber $\Stab(dF)\cdot\alpha$.
\end{lemma}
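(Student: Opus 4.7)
\emph{Proof plan.} The plan is to exploit $\GL(V)$-equivariance of $\zeta$ and then identify the fiber over a base-point as a stabilizer orbit. First, I would note that $\zeta = \pi_0|_{\Lie(V)}$ is $\GL(V)$-equivariant: this is precisely the statement, made at the opening of Section \ref{secSmontaggioCanonico}, that $\GL(V)$ preserves the fibers of $\pi_0$, which in turn reflects the fact that the action respects the splitting of a bilinear form into its symmetric and skew-symmetric parts. It follows immediately that $\zeta|_{\GL(V)\cdot\alpha_c}$ takes values in, and surjects onto, $\GL(V)\cdot dF = \Omega$.

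Next I would compute the fiber over $dF$. If $\beta \in \GL(V)\cdot\alpha_c$ lies above $dF$, write $\beta = \psi\cdot\alpha_c$ for some $\psi \in \GL(V)$; since the action preserves the symmetric and skew parts, $\beta = \psi\cdot dF + \psi\cdot\alpha$, and the requirement $\zeta(\beta) = dF$ forces $\psi\cdot dF = dF$, i.e.\ $\psi \in \Stab(dF)$. Hence $\beta - dF = \psi\cdot\alpha \in \Stab(dF)\cdot\alpha$; conversely, every element of this stabilizer orbit produces such a $\beta$. This identifies the fiber over $dF$ (after translation by $dF$) with $\Stab(dF)\cdot\alpha$.

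For an arbitrary $dG \in \Omega$, equivariance transports the fiber over $dF$ onto the fiber over $dG$ via any $\varphi \in \GL(V)$ with $\varphi\cdot dF = dG$; combined with the standard fact that $\psi \mapsto \psi\cdot dF$ exhibits $\GL(V) \to \Omega$ as a principal $\Stab(dF)$-bundle, local sections $s : U \to \GL(V)$ yield trivializations $(dH,\gamma) \mapsto dH + s(dH)\cdot\gamma$ of $\zeta|_{\GL(V)\cdot\alpha_c}$ over $U \subseteq \Omega$, with $\gamma$ ranging over $\Stab(dF)\cdot\alpha$. The main subtlety to double-check is the compatibility between the two $\GL(V)$-actions in play (on Lie structures versus on $1$-forms, which by Remark \ref{remAzione} differ by the determinantal scalar); but this twist is pointwise invertible and so preserves the orbit-theoretic content of the argument, and everything goes through.
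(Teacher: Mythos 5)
Your argument is correct and coincides with the paper's own proof: both reduce to the fiber over $dF$ by $\GL(V)$-equivariance of $\zeta$ (i.e.\ the fact that the action preserves the splitting into symmetric and skew parts) and then identify that fiber with $\Stab(dF)\cdot\alpha$ by observing that an element of the orbit lying over $dF$ must come from a $\psi$ fixing $dF$. Your extra remarks on local trivializations and the determinantal twist of Remark \ref{remAzione} are sound but not needed beyond what the paper already does.
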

\begin{proof}
Since $\GL(V)$ acts as a bundle automorphism on $\zeta|_{\GL(V)\cdot \alpha_c}$, it suffices to compute the fiber $\zeta|_{\GL(V)\cdot \alpha_c}^{-1}(dF)$. An element $c'=c_F+c_{\alpha'}$ is in such a fiber if and only if    $dF+\alpha'\in\GL(V)\cdot\alpha_c$, i.e., $\alpha'=\psi^*(\alpha)$, with $\psi\in\Stab(dF)$.
\end{proof}
% Then we can compute  the dimension of $\GL(V)\cdot c$ by knowing the dimension of $\GL(V)\cdot dF$ and that of $\Stab(dF)\cdot\alpha$.
\begin{corollary}
$\dim\GL(V)\cdot \alpha_c=\dim(\GL(V)\cdot dF)+\dim\Stab(dF)\cdot\alpha$.
\end{corollary}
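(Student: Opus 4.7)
The plan is to deduce this corollary immediately from the preceding Lemma~\ref{lemLemmaCheSembraStupidoMaNonLoEAffatto}, which does all the structural work. That lemma identifies the restriction $\zeta|_{\GL(V)\cdot\alpha_c}$ as a bundle over $\Omega=\GL(V)\cdot dF$ whose typical fiber is $\Stab(dF)\cdot\alpha$. Once this bundle structure is in hand, the only remaining ingredient is the standard dimension formula
\[
\dim(\text{total space})=\dim(\text{base})+\dim(\text{fiber}),
\]
which here reads precisely
\[
\dim \GL(V)\cdot\alpha_c = \dim\Omega+\dim\bigl(\Stab(dF)\cdot\alpha\bigr)
=\dim\bigl(\GL(V)\cdot dF\bigr)+\dim\bigl(\Stab(dF)\cdot\alpha\bigr).
\]

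First I would note that the total space $\GL(V)\cdot\alpha_c$ and the base $\Omega$ are both $\GL(V)$--homogeneous, and that $\zeta$ is $\GL(V)$--equivariant (by Remark~\ref{remAzione}, modulo the scalar $\det\psi$ which does not affect the partition into $\GL(V)$--orbits). Consequently the projection $\zeta|_{\GL(V)\cdot\alpha_c}\to\Omega$ is surjective with all fibers isomorphic, so the dimension formula above is valid without any further regularity check: over a single fiber we can apply Lemma~\ref{lemLemmaCheSembraStupidoMaNonLoEAffatto}, and homogeneity transports this to every other fiber.

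The only mildly subtle point, and the one I would be most careful about, is that in the lemma the fiber is described as the orbit $\Stab(dF)\cdot\alpha$, and one should confirm that ``dimension'' here is meant in the same algebraic sense on both sides of the equality. Since $\Stab(dF)\subseteq\GL(V)$ is an algebraic subgroup and $\alpha$ is a fixed element of $N$, the orbit $\Stab(dF)\cdot\alpha$ is an algebraic subvariety of $N$, so its dimension is well defined; identical considerations apply to $\GL(V)\cdot dF$ and $\GL(V)\cdot\alpha_c$. There is no real obstacle here — the entire statement is essentially the additivity of dimensions in a homogeneous fibration, with Lemma~\ref{lemLemmaCheSembraStupidoMaNonLoEAffatto} supplying the fibration itself.
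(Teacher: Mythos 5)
Your proposal is correct and matches the paper's intent: the corollary is stated as an immediate consequence of Lemma~\ref{lemLemmaCheSembraStupidoMaNonLoEAffatto}, with the dimension of the total space of the bundle $\zeta|_{\GL(V)\cdot\alpha_c}$ computed as base plus fiber. The paper gives no further argument, so your additional remarks on equivariance and well-definedness of the dimensions only make explicit what the paper leaves tacit.
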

This corollary suggests a formula for computing $\dim B^2(c)$,
\begin{equation*}
\dim B^2(c)=\dim B^2(c_F)+\dim\left(\frac{\Stab (dF)}{\Stab (dF)\cap\Stab(\alpha)}\right),
\end{equation*}
whose \virg{infinitesimal version} is
\begin{equation}\label{eqFormulaBiDueStruttureMiste}
\dim\left( \frac{\End(V)}{\sym(dF+\alpha)}\right)=\dim\left( \frac{\End(V)}{\sym(dF) }\right)+\dim\left(\frac{\sym (dF)}{\sym (dF)\cap\sym(\alpha)}\right).
\end{equation}
\begin{remark}\label{remSimpaticissimo}
Notice that $\sym(dF+\alpha)=\sym(dF)\cap\sym(\alpha)$.
%, so that  formula \REF{eqFormulaBiDueStruttureMiste} above looks like a \virg{factorization} of $\frac{\End(V)}{\sym(dF+\alpha)}$, due to the disassemblement property of $c$.
\end{remark}

\section{Computations}

\subsection{Unimodular structures}
In the case $\alpha=0$ Lemma \ref{lemLemmaCheSembraStupidoMaNonLoEAffatto} says that the orbit of $\alpha_c$ coincides with $\Omega$. In view of \REF{eqFormulaBiDueStruttureMiste}, in order to find its dimension, it is sufficient to  compute $\dim[\sym(dF)]$  (Proposition  \ref{propSimmetrieSimmetriche}).
% and plug the result into   \REF{eqFormulaBiDueStruttureMiste}. To find $\dim\Lie(V)$ at points of $\Omega$, we just have to add to $\dim U$, which is six, the dimension of the  fiber of $\zeta|_\Omega$, as   Proposition  \ref{propSimpatica} will show.
\begin{proposition}\label{propSimmetrieSimmetriche}
\begin{equation*}
\dim B^2(dF)=\left\{\begin{array}{cc}6 & \textrm{\normalfont if } \rank dF=3 \\5 &\textrm{\normalfont if } \rank dF=2 \\3 &\textrm{\normalfont if } \rank dF=1\end{array}\right.
\end{equation*}
\end{proposition}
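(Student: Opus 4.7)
The plan is to reduce the proposition to three straightforward linear-algebra computations, one per value of $\rank dF$. By the identification, discussed just before Section~\ref{secSmontaggioCanonico}, of $\dim B^2(c)$ with the dimension of the $\GL(V)$-orbit of $c$, we have
\[
\dim B^2(c_F) \;=\; \dim\End(V) - \dim\sym(c_F) \;=\; 9 - \dim\sym(c_F),
\]
so it suffices to show that $\dim\sym(c_F) = 3,\,4,\,6$ when $\rank dF = 3,\,2,\,1$ respectively.

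The first step is to make the symmetry condition explicit. Differentiating the identity $\psi^{*}(\alpha_c)\cdot\det\psi = \alpha_{\psi(c)}$ of Remark~\ref{remAzione} at $\psi = \id + t\varphi$ yields
\[
L_{X_\varphi}(\alpha_c) + \mathrm{tr}(\varphi)\,\alpha_c \;=\; \alpha_{L_{X_\varphi}(c)}.
\]
Thus $\varphi\in\sym(c_F)$ iff $L_{X_\varphi}(dF) = -\mathrm{tr}(\varphi)\,dF$; since $F$ is homogeneous of degree $2$, this reduces to the purely algebraic equation
\[
L_{X_\varphi}(F) \;=\; -\mathrm{tr}(\varphi)\,F \qquad\text{in } S^2(V).
\]

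The second step is to diagonalize $F$. Since $\dim\sym(c_F)$ is $\GL(V)$-invariant and any symmetric bilinear form over $\F$ with $\mathrm{char}\,\F\ne 2$ is diagonalizable (cf.\ \cite{Lam}), we may assume $F = \tfrac12(\lambda_1 x_1^2+\lambda_2 x_2^2+\lambda_3 x_3^2)$ with precisely $r=\rank dF$ nonzero coefficients. Writing $\varphi = (\varphi_k^j)$ and using $X_\varphi = \varphi_k^j x_j\,\partial/\partial x_k$ from \REF{eqLVF}, a direct expansion of $L_{X_\varphi}(F)$ decouples the condition into the transparent linear system
\[
2\lambda_j\,\varphi_j^j \;=\; -\mathrm{tr}(\varphi)\,\lambda_j\;\;(j=1,2,3), \qquad \lambda_a\,\varphi_a^b + \lambda_b\,\varphi_b^a \;=\; 0 \;\;(a\ne b),
\]
on the nine entries of $\varphi$.

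The final step is a case analysis. For $r=3$, summing the diagonal relations gives $\mathrm{tr}(\varphi)=0$, hence all $\varphi_j^j$ vanish, and each of the three off-diagonal relations imposes one linear condition on a pair $(\varphi_a^b,\varphi_b^a)$, leaving $\dim\sym(c_F)=3$. For $r=2$ (say $\lambda_3=0$) the $j=3$ diagonal equation is vacuous, the off-diagonal equations for $(a,b)=(1,3),(2,3)$ force $\varphi_1^3=\varphi_2^3=0$, and the remaining relations leave $\dim\sym(c_F)=4$. For $r=1$ (say $\lambda_2=\lambda_3=0$) only the $j=1$ diagonal relation and two off-diagonal equations survive, imposing $\varphi_1^2=\varphi_1^3=0$ together with $3\varphi_1^1+\varphi_2^2+\varphi_3^3=0$, while the other six entries remain free, giving $\dim\sym(c_F)=6$. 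Substituting into $\dim B^2(c_F)=9-\dim\sym(c_F)$ yields the claimed values $6,\,5,\,3$. The main obstacle is purely clerical: one must track the $\mathrm{tr}(\varphi)$ correction of Remark~\ref{remAzione} (easy to overlook), respect the index convention of \REF{eqLVF} when translating between the matrix of $\varphi$ and that of $X_\varphi$, and note that the step $\mathrm{tr}(\varphi)=0$ in the rank-$3$ case uses $5\ne 0$ in $\F$, which is implicit in the paper's working assumptions.
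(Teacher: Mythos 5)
Your proposal is correct and follows essentially the same route as the paper: reduce $\dim B^2(c_F)$ to $9-\dim\sym(c_F)$, diagonalize $F$, expand $L_{X_\varphi}$ on $dF$, and count the solutions of the resulting linear system; your case analysis reproduces the dimensions $3,4,6$ for $\dim\sym$ and hence $6,5,3$ for $\dim B^2$. The one substantive point where you diverge is the symmetry condition itself: you derive $L_{X_\varphi}(dF)=-\mathrm{tr}(\varphi)\,dF$ by differentiating the finite identity of Remark \ref{remAzione}, whereas the paper works with the plain condition $L_{X_\varphi}(dF)=0$, consistent with its stated identity $L_{X_\varphi}(\alpha_c)=\alpha_{d_{P^c}(\varphi)}$ (which carries no trace term). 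For $\rank dF<3$ the two conditions cut out \emph{different} subspaces of $\End(V)$ — the trace term perturbs only the diagonal block of your linear system without changing its rank — so the dimension count is unaffected and both proofs are valid. The trace-free version is slightly cleaner: in the rank-$3$ case it gives $\varphi_j^j=0$ directly, so the $5\neq 0$ caveat you rightly flag never arises (and indeed, had you taken the opposite sign for the trace correction, the sum of the diagonal relations would read $2\,\mathrm{tr}\varphi=3\,\mathrm{tr}\varphi$ and the caveat would again disappear; the sign is not pinned down by the paper's conventions, which is worth being aware of). None of this affects the correctness of your count over $\R$ or $\C$, which is the setting in which the proposition is used.
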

\begin{proof}
We shall  show that 
\begin{equation*}
\dim [\sym(dF)]=\left\{\begin{array}{cc}3 &\textrm{ if } \rank dF=3 \\4 &\textrm{ if } \rank dF=2 \\6 &\textrm{ if } \rank dF=1.\end{array}\right.
\end{equation*}
To this end, prove that  $\varphi\in\sym(\frac{1}{2}d(x_1^2+\lambda x_2^2+\mu x_3^2))$ if and only if 
\begin{equation}\label{eqSimmForSimmetriche1}
X_\varphi=(-\lambda a x_2-\mu  bx_3)\frac{\partial}{\partial x_1}+( ax_1+ cx_2 + ex_3)\frac{\partial}{\partial x_2}+( b x_1+f x_2 + dx_3)\frac{\partial}{\partial x_3},
\end{equation}
with the   coefficients $a,\ldots,f$ satisfying conditions
\begin{equation}\label{eqSimmForSimmetriche2}
\left\{\begin{array}{c}\lambda c=0 \\ \mu d=0 \\ \lambda e+\mu f=0.\end{array}\right.
\end{equation}
Indeed, since  $L_{X_\varphi}(\frac{1}{2}d(x_k^2))=(\varphi_i^jx_j\frac{\partial}{\partial x_i})(\frac{1}{2}d(x_k^2))=\frac{1}{2}d((\varphi_i^jx_j\frac{\partial}{\partial x_i})(x_k^2))=d(\varphi_i^jx_j\delta_k^ix_k)=\varphi_k^jdx_jx_k,$ the Lie derivative
\begin{align*}
L_{X_\varphi}({\textstyle\frac{1}{2}}d(x_1^2 +\lambda x_2^2 +\mu x_3^2))&=\varphi_1^1{\textstyle\frac{1}{2}}d(x_1^2)+\lambda\varphi_2^2{\textstyle\frac{1}{2}}d(x_2^2)+\mu\varphi_3^3{\textstyle\frac{1}{2}}d(x_3^2)\\
&+(\varphi_1^2+\lambda\varphi_2^1)d(x_1x_2)+(\mu\varphi_3^1+\varphi_1^3)d(x_3x_1)+(\lambda\varphi_2^3+\mu\varphi_3^2)d(x_2x_3)
\end{align*}
vanishes if and only if $X_\varphi$ can be put in the form \REF{eqSimmForSimmetriche1}, with coefficients satisfying \REF{eqSimmForSimmetriche2}. 
\end{proof}%[AGGIUNGERE TUTTE LE PARENTESI PER NON FAR SEMBRARE DELLE METRICHE]\par
In the left side of Figure \ref{BV} the spaces $B^2(dF)$, whose dimension was computed in Proposition \ref{propSimmetrieSimmetriche},  are drawn as tangent spaces to $\Lie_0(V)$.%, and its degeneracy loci, depending on the rank of $dF$.
% being 3, 2, and 1, respectively.
 \begin{proposition}\label{propSimpatica}
 $\dim Z^2(c_F)=9-\rank dF$.
 \end{proposition}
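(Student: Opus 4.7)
The plan is to identify $Z^2(c_F)$ with the tangent space to $\Lie(V)$ at $c_F$ and to evaluate its dimension by exploiting the canonical disassembling $V\otimes_\F\bigwedge^2(V^*)=\Lie_0(V)\oplus N$.

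First I will linearize the Jacobi identity in the 1--form language. By Lemma \ref{lmLemmaIntegrabilit�}, the variety $\Lie(V)$ is cut out inside the space of linear 1--forms on $V^*$ by $\alpha\wedge d\alpha=0$. For $c'\in Z^2(c_F)$ write $\beta=\alpha_{c'}$; expanding $(dF+t\beta)\wedge d(dF+t\beta)$ and using $d(dF)=0$ shows that the first--order term in $t$ is $t\, dF\wedge d\beta$. Hence $c'\in Z^2(c_F)$ if and only if
\[
dF\wedge d\beta=0.
\]

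Second, using the disassembling \eqref{eqDisassemblaggioCanonico} I will split $\beta=dG+\alpha$ with $dG\in\Lie_0(V)$ and $\alpha\in N$. Since $d(dG)=0$, only the purely non--unimodular component of $\beta$ contributes to the constraint, which becomes $dF\wedge d\alpha=0$, i.e. $\alpha\in Z_N^2(dF)$. Consequently, as a subspace of the ambient $\Lie_0(V)\oplus N$,
\[
Z^2(c_F)=\Lie_0(V)\oplus Z_N^2(dF).
\]

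Third, the dimension count is immediate: $\dim\Lie_0(V)=6$, being the space of symmetric bilinear forms on a three--dimensional vector space, and $\dim Z_N^2(dF)=3-\rank dF$ by Lemma \ref{lemCNC}. Adding gives $\dim Z^2(c_F)=9-\rank dF$.

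The only delicate point is the verification that every symmetric linear 1--form $dG$ automatically defines a 2--cocycle at $c_F$; this is transparent, since $d(dG)=0$ kills the linearized Jacobi condition outright, so no serious obstacle is expected. Geometrically the formula matches Figure \ref{BV} perfectly: the \virg{horizontal} $6$--dimensional plane $\Lie_0(V)$ together with the \virg{vertical} fiber of $\zeta$ over $dF$, of dimension $3-\rank dF$, assemble into the tangent space $Z^2(c_F)$.
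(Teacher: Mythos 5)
Your argument is correct and follows essentially the same route as the paper, whose entire proof is the observation $Z^2(c_F)=\Lie_0(V)\oplus Z_N^2(dF)$ followed by Lemma \ref{lemCNC}. You merely make that observation explicit by linearizing $\alpha\wedge d\alpha=0$ at $dF$ to get the cocycle condition $dF\wedge d\beta=0$, which kills the symmetric part of $\beta$ automatically and reduces the skew part to $\alpha\in Z_N^2(dF)$ --- exactly the decomposition the paper invokes.
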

 \begin{proof}
 Observe that \emph{$Z^2(c_F)=\Lie_0(V)\oplus Z_N^2(dF)$} and apply Lemma \ref{lemCNC}.
 \end{proof}
Figure \ref{BV} makes evident  Proposition \ref{propSimpatica}. Indeed, $Z^2(c_F)$ is precisely the space spanned by the \virg{horizontal} subspace $\Lie_0(V)$ and the \virg{vertical} subspaces $Z_N^2(dF)$.\par
The above results concerning the orbits of unimodular structures are summarized in  the  next table for $\F=\R$.
{\small\begin{center}
	\begin{tabular}{cccccccc}
	Type &Bianchi type(s) & Lie structure(s)&$\rank dF$ & \emph{$\dim\GL(V)\cdot dF$} & $\dim Z_N^2(dF)$ & $\dim Z^2(c_F)$ & $ \dim H^2(c_F)$\\
	$A_0$ &AI & Abelian &0 & \emph{0} & 3 & 9 & 9\\
	$A_1$&AII	& Heisenberg &1 & \emph{3} & 2 & 8 & 5\\
	$A_2^-$, $A_2^+$&AVI$_0$, AVII$_0$	& $\mathfrak{e}(1,1)$, $\mathfrak{e}(2)$&	2 & \emph{5} & 1 & 7 & 2\\
	$A_3^-$, $A_3^+$&AVIII, AIX		 &	$\mathfrak{o}(2,1)$, $\mathfrak{o}(3)$ & 3& \emph{6} & 0 & 6 & 0
	\end{tabular}
\end{center}}
\begin{remark}
In this table we introduce a new notation for isomorphism classes of three--dimensional Lie algebras, hoping it will be more informative.   
%Type labels is a private notation of   this paper. 
The original Bianchi notation can be found  in  \cite{CG}.%, and its peculiar distinction of certain types can be understood by looking at the corresponding symplectic foliations.
\end{remark}
%It is interesting to notice  that the codimension of   $\Omega$ in $\Lie(V)$ is precisely the codimension of  {$\Omega$ in $U$},    {plus} the dimension of the  fiber of $\zeta|_\Omega$. 
\subsection{Non--unimodular structures}
\subsubsection{$\rank dF=0$.}
 Then   $dF=0$, $\Omega=\GL(V)\cdot dF=\{0\}$, $Z_N^2(0)=N$ and $\Stab(c_F)=\GL(V)$. In other words,     $\frac{\zeta|_\Omega}{\sigma|_\Omega}$ consists of just one fiber, which identifies with
 \begin{equation}\label{eqSpazioModuliFibraRangoZero}
\frac{N}{\GL(V)}.
\end{equation}
Independently on the field $\F$, it can be easily proved (see \cite{MarmoVilasiVinogradov1998}) the following
\begin{proposition}
The moduli space \REF{eqSpazioModuliFibraRangoZero} consists of two orbits, one of which is  0.
\end{proposition}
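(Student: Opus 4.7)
The plan is to reduce the claim to the standard classification of skew--symmetric bilinear forms. By \REF{eqIdentificazioneForme}, $N$ is canonically identified with the $3$--dimensional subspace $\bigwedge^{2}V\subset V\otimes V$ of skew--symmetric bilinear forms on $V^{*}$, spanned by $\alpha_{1},\alpha_{2},\alpha_{3}$. The first step is to unwind, by a short direct computation, the $\GL(V)$--action recalled in the previous section: pull--back on a linear $1$--form $\omega_{q}=q^{ij}x_{i}dx_{j}$ corresponds, on the level of the associated bilinear form $q$, to the congruence transformation $q\mapsto \Psi\, q\, \Psi^{T}$, where $\Psi$ is the matrix of $\psi$. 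This makes it manifest that $N$ is $\GL(V)$--invariant (skew--symmetry is preserved), and reduces our orbit problem to a classical one.

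Once the action is so identified, the conclusion follows from the well--known fact, valid over any field of characteristic different from $2$, that two skew--symmetric bilinear forms on a finite--dimensional vector space are congruent if and only if they have the same rank, and that this rank is necessarily even. In our three--dimensional setting the possible ranks are only $0$ and $2$, giving exactly two congruence classes: the zero form, and the set of all non--zero skew--symmetric forms. Thus $N/\GL(V)$ consists of exactly two orbits, one of them being $\{0\}$.

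As a variant more in line with the paper's viewpoint, one may instead contract with the volume $3$--covector $\boldsymbol{\xi}$ to construct the map $\Phi:\bigwedge^{2}V\longrightarrow V^{*}$, $v\wedge w\mapsto i_{v\wedge w}\boldsymbol{\xi}$. A direct check shows that $\Phi$ is $\GL(V)$--equivariant up to the nowhere--vanishing scalar $\det\psi$; since such a scalar preserves orbits, $\Phi$ induces a bijection between $\GL(V)$--orbits in $N$ and in $V^{*}$, and the latter are obviously $\{0\}$ and $V^{*}\setminus\{0\}$. I expect the only genuinely verificational step to be the identification of the $\GL(V)$--action with matrix congruence (or, equivalently, the equivariance computation for $\Phi$); everything else is either classical or an immediate appeal to transitivity of $\GL(V)$ on non--zero covectors.
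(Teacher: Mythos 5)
Your proof is correct, but it is not the route the paper takes: the paper does not actually prove this proposition at all --- it states that the claim ``can be easily proved'' independently of the field and cites \cite{MarmoVilasiVinogradov1998}, and then, via Proposition \ref{propSimmetrieAntiSimmetriche}, supplies only an \emph{infinitesimal} argument valid for $\F=\R$ or $\C$ (namely $\dim\sym(\alpha_3)=6$, so the orbit of $\alpha_3$ is $3$--dimensional and hence fills $N\smallsetminus\{0\}$). Your argument --- identifying the $\GL(V)$--action on $N$ with congruence of skew--symmetric bilinear forms and invoking the symplectic--basis theorem (rank is even and is a complete congruence invariant, hence only ranks $0$ and $2$ occur in dimension $3$) --- is a self--contained, purely algebraic proof that actually delivers the ``independently of the field $\F$'' assertion the paper only gets by citation; your variant via $\Phi:\bigwedge^2V\to V^*$ and transitivity of $\GL(V)$ on $V^*\smallsetminus\{0\}$ is equally valid. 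Two small points of hygiene: the pull--back action $\psi^*$ on $\alpha_c$ is the one corresponding to isomorphism of Lie brackets (the $\det\psi$ twist of Remark \ref{remAzione} only enters when passing to $P^c$), so your first reduction needs no correction for it; and in the variant, the statement ``such a scalar preserves orbits'' should be justified by noting that the two $\GL(V)$--orbits in $V^*$, namely $\{0\}$ and $V^*\smallsetminus\{0\}$, are each stable under multiplication by nonzero scalars, so the $\det\psi$--twisted equivariance still induces a bijection on orbit sets. Neither point is a gap.
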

%  Accordingly to Lemma \ref{lemLemmaCheSembraStupidoMaNonLoEAffatto}, the orbit of $c$ is a bundle over $\{0\}$ with fiber $\GL(V)\cdot\alpha_3$. 
\begin{proposition}\label{propSimmetrieAntiSimmetriche}
$\varphi\in\sym(\alpha_3)$ if and only if  
 \begin{equation}\label{eqSimmForSkSimmetriche1}
X_\varphi=( a x_1+bx_2)\frac{\partial}{\partial x_1}+( cx_1- ax_2  )\frac{\partial}{\partial x_2}+( d x_1+e x_2 + fx_3)\frac{\partial}{\partial x_3}.
\end{equation}
\end{proposition}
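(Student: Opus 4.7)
The plan is to follow the template of Proposition \ref{propSimmetrieSimmetriche}: write $\alpha_3$ explicitly in coordinates, compute $L_{X_\varphi}(\alpha_3)$ directly via Leibniz, expand the result in the basis $\{x_idx_j\}_{i,j=1,2,3}$ of linear $1$--forms, and read off the linear constraints on the nine entries $\varphi_i^j$ of the endomorphism $\varphi$.

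First I would unpack the definition of $\alpha_3$: from $\alpha_j=\varepsilon_j^{\ i_1i_2}x_{i_1}dx_{i_2}$ one obtains $\alpha_3=x_1dx_2-x_2dx_1$. Then, using $L_{X_\varphi}(x_k)=\varphi_k^jx_j$ and $L_{X_\varphi}(dx_k)=\varphi_k^jdx_j$ together with the Leibniz rule, the Lie derivative $L_{X_\varphi}(\alpha_3)$ becomes a sum of four terms of the form $\varphi_\bullet^jx_j\,dx_\bullet$ and $x_\bullet\varphi_\bullet^j\,dx_j$. Grouping coefficients by the nine independent monomials $x_idx_j$ should collapse the vanishing condition to three scalar equations that I expect to be
\[
\varphi_1^1+\varphi_2^2=0,\qquad \varphi_1^3=0,\qquad \varphi_2^3=0,
\]
with the remaining five entries $\varphi_1^2,\varphi_2^1,\varphi_3^1,\varphi_3^2,\varphi_3^3$ unconstrained. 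Relabeling $a=\varphi_1^1=-\varphi_2^2$, $b=\varphi_1^2$, $c=\varphi_2^1$, $d=\varphi_3^1$, $e=\varphi_3^2$, $f=\varphi_3^3$ then reproduces \REF{eqSimmForSkSimmetriche1} verbatim.

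There is no conceptual obstacle here; the computation is purely mechanical once the conventions are fixed. The only mildly delicate point is to respect the paper's index convention (the index in $\partial/\partial x_i$ is treated as upper, so that $X_\varphi|_V=\varphi$ forces $X_\varphi(x_k)=\varphi_k^jx_j$), since a transposition error would scramble the three constraints and yield a spurious answer. As a sanity check, the parameter count $9-3=6$ matches the six free coefficients $a,\ldots,f$ in \REF{eqSimmForSkSimmetriche1}, and geometrically it reflects the fact that $\sym(\alpha_3)$ contains an $\mathfrak{sl}(2)$ acting on the $(x_1,x_2)$--plane together with a complementary $3$--dimensional subalgebra acting along the $x_3$--direction.
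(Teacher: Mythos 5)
Your proposal is correct and follows essentially the same route as the paper: the paper's proof is precisely the direct Leibniz computation $L_{X_\varphi}(\alpha_3)=-\varphi_1^3\alpha_1-\varphi_2^3\alpha_2+(\varphi_1^1+\varphi_2^2)\alpha_3$, yielding the same three constraints $\varphi_1^3=\varphi_2^3=0$, $\varphi_1^1+\varphi_2^2=0$ and the same relabeling to $a,\ldots,f$. Your parameter count and index-convention caveat are both consistent with the paper's conventions.
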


\begin{proof}
It directly follows from
\begin{align*}
L_{X_\varphi}(\alpha_3)&=(\varphi_i^jx_j{\textstyle\frac{\partial}{\partial x_i}})(x_1dx_2-x_2dx_1)=\\
&=\varphi_i^jx_j\delta_1^idx_2+x_1d(\varphi_i^jx_j\delta_2^i)-\varphi_i^jx_j\delta_2^idx_1-x_2d(\varphi_i^jx_j\delta_1^i)=\\
&=\varphi_1^j(x_jdx_2-x_2dx_j)+\varphi_2^j(x_1dx_j-x_jdx_1)=-\varphi_1^3\alpha_1-\varphi_2^3\alpha_2+(\varphi_1^1+\varphi_2^2)\alpha_3.
\end{align*}

\end{proof}
  The \virg{vertical} blue  subspace in Figure \ref{BV} is $N$. The 3--dimensional space $B^2(\alpha_3)$ is shown inside $N$.\par
Notice that when $\F=\R$ or $\C$, Proposition \ref{propSimmetrieAntiSimmetriche} is sufficient to prove that the orbit of $\alpha_3$ is 3--dimensional and, therefore, it coincides  with $N\smallsetminus\{0\}$.

   \subsubsection{$\rank dF=1$.} 
Independently on the field $\F$, all rank--1 elements of $\Lie_0(V)$ belong to the same orbit    $\Omega=\GL(V)\cdot {\textstyle\frac{1}{2}} d(x_1^2)$. To compute the fiber of $\frac{\Lie(V)}{\GL(V)}$ over $\Omega$, it suffices to compute the moduli space 
\begin{equation}\label{eqSpazioModuliFibraRangoUno}
\frac{Z_N^2({\textstyle\frac{1}{2}} d(x_1^2))}{\Stab({\textstyle\frac{1}{2}} d(x_1^2))}
\end{equation}
 (see  Theorem \ref{thTeoremaCentrale}).  \par
Observe that $Z_N^2({\textstyle\frac{1}{2}} d(x_1^2))$ is the 2--dimensional vector space spanned by $\alpha_2$ and $\alpha_3$ (see the proof of Lemma \ref{lemCNC}). Fix a non--zero element  $a\alpha_2+b\alpha_3$. Then it is possible to choose an automorphism $\psi\in\GL(V)$ which preserves $x_1$ and sends $bx_2-ax_3$ to $x_2$. In other words, $\psi\in\Stab({\textstyle\frac{1}{2}} d(x_1^2))$ and $\psi^*(a\alpha_2+b\alpha_3)=\alpha_3$, thus proving the following
\begin{proposition}\label{propFibraRangoUno}
The moduli space \REF{eqSpazioModuliFibraRangoUno} consists of two orbits, one of which is  0.
\end{proposition}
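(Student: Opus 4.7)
The plan is to exhibit exactly two $\Stab(\frac{1}{2}d(x_1^2))$--orbits in $Z_N^2(\frac{1}{2}d(x_1^2))$: the singleton $\{0\}$, and the orbit of $\alpha_3$, which I will show contains all non--zero elements.

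First I would make the fiber explicit. By Proposition \ref{corCompatibilit�UN}, an element $\alpha=a\alpha_1+b\alpha_2+c\alpha_3\in N$ belongs to $Z_N^2(\frac{1}{2}d(x_1^2))$ iff $d(x_1^2)\wedge\alpha=0$; the computation already carried out in the proof of Lemma \ref{lemCNC} forces $a=0$. Hence $Z_N^2(\frac{1}{2}d(x_1^2))=\Span{\alpha_2,\alpha_3}$ is a 2--dimensional subspace of $N$.

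Second, I would rewrite a general non--zero element in a form that suggests the right change of variables. Setting $y\df bx_2-ax_3\in V$, a direct expansion gives
\begin{equation*}
a\alpha_2+b\alpha_3 = x_1\,dy - y\,dx_1,
\end{equation*}
i.e.\ the same shape as $\alpha_3=x_1\,dx_2-x_2\,dx_1$ after the linear substitution $x_2\leftrightarrow y$. Since $(a,b)\neq(0,0)$, the vector $y$ is non--zero and linearly independent of $x_1$, so $\{x_1,y\}$ extends to a basis $\{x_1,y,z\}$ of $V$. Define $\psi\in\GL(V)$ by $\psi(x_1)=x_1$, $\psi(y)=x_2$, $\psi(z)=x_3$; this $\psi$ is an isomorphism by construction, fixes $x_1$, and therefore lies in $\Stab(\frac{1}{2}d(x_1^2))$.

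Finally, by naturality of $d$ under pull--back,
\begin{equation*}
\psi^*(a\alpha_2+b\alpha_3)=\psi^*(x_1\,dy-y\,dx_1)=x_1\,dx_2-x_2\,dx_1=\alpha_3,
\end{equation*}
so every non--zero element of $Z_N^2(\frac{1}{2}d(x_1^2))$ is $\Stab(\frac{1}{2}d(x_1^2))$--equivalent to $\alpha_3$, giving the two announced orbits. The whole argument hinges on spotting the identity $a\alpha_2+b\alpha_3=x_1\,dy-y\,dx_1$; once this is noticed the problem collapses to a one--line verification, and I do not expect any genuine obstacle. The only point where care is needed is keeping the pull--back convention of the $\GL(V)$--action straight, as flagged in Remark \ref{remAzione}, which here is unproblematic because $\psi$ is chosen to stabilize the base form $\frac{1}{2}d(x_1^2)$.
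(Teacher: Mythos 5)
Your proposal is correct and follows essentially the same route as the paper: identify $Z_N^2(\frac{1}{2}d(x_1^2))=\Span{\alpha_2,\alpha_3}$ via Lemma \ref{lemCNC}, then move any non--zero $a\alpha_2+b\alpha_3$ to $\alpha_3$ by an automorphism fixing $x_1$ and sending $bx_2-ax_3$ to $x_2$. The only difference is that you make explicit the identity $a\alpha_2+b\alpha_3=x_1\,dy-y\,dx_1$ with $y=bx_2-ax_3$, which the paper leaves implicit.
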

      \subsubsection{$\rank dF=2$.} 
The orbits of rank--2 structures  in $\Lie_0(V)$ are $\Omega=\GL(V)\cdot dF$, $F={\textstyle\frac{1}{2}} (x_1^2+\epsilon x_2^2)$, with $\epsilon\in\F$  (see \cite{Lam}).  Recall that $Z_N^2(dF)$ is the 1--dimensional subspace spanned by $\alpha_3$ (see the proof of Lemma \ref{lemCNC}).\par
We shall show that the fiber over $\Omega$ is $\F$.
\begin{proposition}\label{propFibraRangoDue}
Let $\F$ be $\R$ (resp., $\C$). Then the moduli space 
\begin{equation}\label{eqSpazioModuliFibraRangoDue}
\frac{Z_N^2(dF)}{(\Stab dF)},\quad F={\textstyle\frac{1}{2}} (x_1^2+\epsilon x_2^2), \epsilon=\pm 1\ (\textrm{resp.}\ 1)
\end{equation}
coincides with $\langle\alpha_3\rangle$.
\end{proposition}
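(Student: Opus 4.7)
The plan is to describe $\Stab(dF)$ explicitly and then show that its action on the one--dimensional line $Z_N^2(dF)=\langle\alpha_3\rangle$ (established in the proof of Lemma \ref{lemCNC}) is, at worst, a sign, so the quotient is identified with the line $\langle\alpha_3\rangle$ itself.

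First I would determine $\Stab(dF)$. By Remark \ref{remAzione}, $\psi\in\GL(V)$ lies in $\Stab(c_F)$ iff $\det(\psi)\,\psi^*(dF)=dF$. Viewing $dF$ through \REF{eqIdentificazioneForme} as the symmetric bilinear form on $V^*$ with matrix $Q=\mathrm{diag}(1,\epsilon,0)$, this equation forces $\psi$ to preserve the kernel line $\langle x_3\rangle$, and a short block--matrix calculation using the non--degeneracy of $Q$ restricted to $\langle x_1,x_2\rangle$ rules out any off--diagonal mixing. Thus $\psi$ decomposes into a $2\times 2$ block $A$ acting on $\langle x_1,x_2\rangle$ together with a scalar $d$ on $\langle x_3\rangle$, subject to
\[
A\,S_2\,A^T=(d\det A)^{-1}S_2,\qquad S_2=\mathrm{diag}(1,\epsilon).
\]

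Next I would evaluate the action on $\alpha_3$. Writing $\alpha_3$ as the skew bilinear form whose $2\times 2$ block is the symplectic form $K_2=\bigl(\begin{smallmatrix}0&1\\-1&0\end{smallmatrix}\bigr)$, the universal identity $AK_2A^T=\det(A)\,K_2$ (valid for any $2\times 2$ matrix $A$) yields $\psi^*(\alpha_3)=\det(A)\,\alpha_3$, so by Remark \ref{remAzione} the Lie--structure action reads $\alpha_3\mapsto\det(\psi)\det(A)\,\alpha_3$. Taking determinants in the stabilizer equation above gives $\det(\psi)^2\det(A)^2=1$, so the scalar $\det(\psi)\det(A)$ lies in $\{\pm 1\}$. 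In particular the $\Stab(dF)$--orbit through $t\alpha_3$ is contained in $\{\pm t\alpha_3\}$, and distinct (unordered) scalar multiples of $\alpha_3$ sit in distinct orbits; combined with Theorem \ref{thTeoremaCentrale}, this identifies the fiber of $\Lie(V)/\GL(V)$ over $\Omega$ with the line $\langle\alpha_3\rangle$, as claimed.

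The main obstacle will be keeping the two separate occurrences of the $\det(\psi)$--twist from Remark \ref{remAzione} straight---one enters the defining equation of $\Stab(dF)$, the other appears in the action of $\Stab(dF)$ on $\alpha_3$---together with the verification that the off--diagonal block of $\psi$ must vanish. Once these twists are correctly balanced, the identity $AK_2A^T=\det(A)\,K_2$ collapses the entire computation to the single scalar constraint $\det(\psi)\det(A)=\pm 1$.
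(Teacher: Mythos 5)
Your route is genuinely different from the paper's: the paper never writes down $\Stab(dF)$ explicitly, but reduces the claim to the inclusion $\Stab(dF)\subseteq\Stab(\lambda\alpha_3)$ and deduces it from the infinitesimal computations of Propositions \ref{propSimmetrieSimmetriche} and \ref{propSimmetrieAntiSimmetriche} (i.e.\ $\sym(dF)\subseteq\sym(\alpha_3)$), which is precisely why the hypothesis $\F=\R$ or $\C$ appears there. Your finite, matrix--level argument is attractive because it avoids exponentiation, but as written it has two defects. The first is the claim that the off--diagonal block of $\psi$ vanishes: it does not. The equation defining $\Stab(dF)$ only forces $\psi$ to preserve the plane $\langle x_1,x_2\rangle$ carrying the nondegenerate part of $Q=\mathrm{diag}(1,\epsilon,0)$, while the component of $\psi(x_3)$ along that plane is annihilated by $Q$ and hence completely unconstrained; the stabilizer is block \emph{triangular}, with a two--dimensional unipotent factor. (Compare with Proposition \ref{propSimmetrieSimmetriche}: $\dim\sym(dF)=4$, whereas your block--diagonal group is only two--dimensional.) This error happens to be harmless for the next step, since $\alpha_3$ lives entirely in the preserved plane and only the $2\times2$ block $A$ acts on it, so the identity $AK_2A^T=\det(A)K_2$ still yields that the action on $\langle\alpha_3\rangle$ is by the scalar $\det(\psi)\det(A)\in\{\pm1\}$.

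The second defect is the real gap: bounding the scalar by $\pm1$ does not prove that the quotient \emph{coincides with} $\langle\alpha_3\rangle$. If the value $-1$ is attained, the orbit of $t\alpha_3$ is $\{\pm t\alpha_3\}$ and the quotient is $\langle\alpha_3\rangle/\{\pm1\}$, a different set; your phrase ``distinct (unordered) scalar multiples'' silently concedes this without resolving it. The paper's proof closes exactly this point by asserting that every element of $\Stab(dF)$ actually \emph{fixes} $\alpha_3$, so to match the stated conclusion you would have to show $\det(\psi)\det(A)=+1$ for all $\psi\in\Stab(dF)$. Be aware that this is delicate rather than routine: for $\epsilon=1$ the element $\psi=\mathrm{diag}(1,-1,-1)$ stabilizes $dF$ (it preserves the bracket of $\mathfrak{e}(2)$) and sends $\alpha_3$ to $-\alpha_3$, so the sign $-1$ \emph{is} realized by a component of $\Stab(dF)$ not seen by the infinitesimal argument. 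Your computation is therefore pointing at a genuine subtlety in the statement itself, but as a proof of the proposition as written it is incomplete: you must either rule out the sign or explain in what sense $\langle\alpha_3\rangle$ and $\langle\alpha_3\rangle/\{\pm1\}$ are being identified.
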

\begin{proof}
Notice that he stabilizer in $\Stab(dF)$ of an element $\lambda\alpha_3\in Z_N^2(dF)$ coincides with $\Stab(\lambda\alpha_3)\cap\Stab(dF)$. To prove the result, it suffices to show that $\Stab(dF)$ is contained in $\Stab(\lambda\alpha_3)$.\par
This is obvious for  $\lambda=0$. For $\lambda\neq 0$ we, first, observe that $\Stab(\lambda\alpha_3)=\Stab(\alpha_3)$. Then, it follows  from Propositions \ref{propSimmetrieSimmetriche} and \ref{propSimmetrieAntiSimmetriche}   that a symmetry of $dF$ is also a symmetry of $\alpha_3$.
\end{proof}
The proof of the above proposition is simplified by infinitesimal  arguments,   which does not work if $\F$ is different from $\R$ or $\C$. For a generic $\F$ see \cite{MarmoVilasiVinogradov1998}.

\subsubsection{Cocycles of non--unimodular Lie structures}
 
\begin{lemma}\label{lemLemmaCheRitenevoFineASeStessoMaChePoiAllaFineHoDovutoRichiamare} If $c$ is a non--unimodular Lie structures, then  $\dim Z^2(c)=6$.\end{lemma}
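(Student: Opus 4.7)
The plan is to realize $Z^2(c)$ as the Zariski tangent space to $\Lie(V)$ at $c$ and to compute its dimension from the Jacobian of the defining Jacobi polynomials. Recall that $\Lie(V)$ sits in the $9$--dimensional ambient space $V\otimes_\F\bigwedge^2(V^*)$ with coordinates $(c_h^k)$ and is cut out by the three polynomials $J^k(c)\df\varepsilon^h_{mi}c_i^m c_h^k$, $k=1,2,3$. Since $Z^2(c)=\{Q\ |\ \Nh{P^c}{P^Q}=0\}$ consists exactly of those $Q$ for which $c+\epsilon Q$ is Lie modulo $\epsilon^2$, one has $Z^2(c)=\ker J_c$, where $J_c=(\partial J^k/\partial c_r^s)(c)$ is the $3\times 9$ Jacobian matrix. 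Therefore $\dim Z^2(c)=9-\rank J_c$, and the lemma amounts to showing $\rank J_c=3$ whenever $c$ is non--unimodular.

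A direct differentiation gives
\[
\frac{\partial J^k}{\partial c_r^s}(c)=\delta^k_s\,t^r-\varepsilon^h_{rs}\,c_h^k,\qquad t^r\df\varepsilon^r_{mi}\,c_i^m.
\]
The vector $t=(t^1,t^2,t^3)$ is the Levi--Civita dualization of the skew--symmetric part of the matrix $(c_h^k)$; hence $t=0$ iff $(c_h^k)$ is symmetric iff $\alpha_c\in\Lie_0(V)$ iff $c$ is unimodular. So non--unimodularity of $c$ is equivalent to $t\neq 0$.

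It remains to show that $t\neq 0$ forces the three rows $R^1,R^2,R^3$ of $J_c$ to be $\F$--linearly independent. Assume $\sum_k\lambda_k R^k=0$, which reads $\lambda_s\,t^r=\sum_k\lambda_k\,\varepsilon^h_{rs}\,c_h^k$ for every $r,s$. The right--hand side is skew--symmetric in $(r,s)$ via the Levi--Civita factor; swapping $r\leftrightarrow s$ and adding the two resulting identities kills the $c$--dependent piece and leaves
\[
\lambda_r\,t^s+\lambda_s\,t^r=0\qquad\forall\,r,s,
\]
that is, the symmetric tensor $\lambda\odot t$ vanishes. Since $\mathrm{char}\,\F\neq 2$ and $t\neq 0$, this forces $\lambda=0$, whence $\rank J_c=3$ and $\dim Z^2(c)=6$.

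The substantive step is this \emph{symmetrization trick}: by exploiting the skew--symmetry of $\varepsilon^h_{rs}$ one eliminates all dependence on the specific $c$ and reduces the rank question to the clean algebraic implication $\lambda\odot t=0\Rightarrow\lambda=0$ for $t\neq 0$, after which the remainder is routine linear algebra. An alternative route would be to split into cases according to $\rank(dF)$, reduce each to a normal form via $\GL(V)$, and compute $Z^2(c)$ by hand; this works but is considerably longer and would essentially duplicate the orbit analysis of the previous fiber propositions.
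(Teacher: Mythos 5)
Your proof is correct, but it takes a genuinely different route from the paper's. The paper first puts $c$ in the normal form $c_{\frac12(\lambda x_1^2+\mu x_2^2)}+c_{\alpha_3}$ and then computes $\Nh{c}{c_G+c_\alpha}$ for an arbitrary linear bivector using the explicit commutator table of Proposition \ref{propProposizioneCommutatoriMisti}; the bracket vanishes iff the three visibly independent linear equations $f+k\lambda=0$, $e+l\mu=0$, $c=0$ hold, whence the cocycle space is $9-3=6$--dimensional. You instead avoid any normalization: you identify $Z^2(c)$ with the kernel of the $3\times 9$ Jacobian of the Jacobi polynomials of \REF{eqAVriet�BianchiInCoordinate}, observe that non--unimodularity is exactly $t\neq 0$ for the antisymmetrization vector $t^r=\varepsilon^r_{mi}c_i^m$, and prove the three rows independent by the symmetrization trick, which cleanly kills the skew ($c$--dependent) part and isolates the symmetric condition $\lambda_r t^s+\lambda_s t^r=0$. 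I checked your Jacobian formula and the implication $\lambda\odot t=0,\ t\neq 0\Rightarrow\lambda=0$ (valid since $\mathrm{char}\,\F\neq 2$: take $r=s$ to get $\lambda_r t^r=0$, then cross terms finish it); both are right, and the identification $Z^2(c)=\ker J_c$ is legitimate because the Schouten bracket is symmetric on bivectors, so the differential of $J$ at $c$ is $Q\mapsto 2\Nh{P^c}{P^Q}$ up to the chosen normalization. What your argument buys is uniformity: it needs neither the normal form (hence no appeal to the classification of non--unimodular structures) nor the earlier commutator computations, and as a byproduct the same Jacobian has rank $\rank dF$ when $t=0$, recovering Proposition \ref{propSimpatica} in the unimodular case. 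What the paper's computation buys is the explicit basis \REF{eqZetaDueNonUnimodulariBASE} of $Z^2(c)$, which is reused throughout the compatibility--variety section; your proof yields the dimension but not directly that basis.
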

\begin{proof} 
Any non--unimodular Lie structure is equivalent to   $c_{{\textstyle\frac{1}{2}} (\lambda x_1^2 +\mu x_2^2)}+c_{\alpha_3}$. Let $dF=d({\textstyle\frac{1}{2}}(ax_1^2 +b x_2^2 +c x_3^2)+e x_2x_3+fx_1x_3+gx_1x_2)$ (resp., $\alpha=k\alpha_1+l\alpha_2+m\alpha_3$) be an arbitrary element of $\Lie_0(V)$ (resp., $N$). Then, independently on $\lambda$ and $\mu$, the commutator
\begin{align*}
 \Nh{c_{{\textstyle\frac{1}{2}} (\lambda x_1^2 +\mu x_2^2)}+c_{\alpha_3}}{c_F+c_\alpha}&=
  \Nh{c_{{\textstyle\frac{1}{2}} (\lambda x_1^2 +\mu x_2^2)} }{c_\alpha}\\ &+\Nh{c_{{\textstyle\frac{1}{2}}(ax_1^2 +b x_2^2 +c x_3^2)+e x_2x_3+fx_1x_3+gx_1x_2}}{c_{\alpha_3}}\\
  &=(k\lambda2x_1+l\mu2x_2+c2x_3+2ex_2+2fx_1)\boldsymbol{\xi}\\
  &=2((f+k\lambda)x_1+(e+l\mu)x_2+cx_3)\boldsymbol{\xi}
\end{align*}
vanishes if and only if the three equations $f+k\lambda=0$, $e+l\mu=0$ and $c=0$ are satisfied.
\end{proof}
The obtained results are  summarized in the following table, where $c=c_F+c_\alpha$.
\begin{center}
	\begin{tabular}{cccccc}
	Type &Bianchi type(s)  & $\rank dF$ & $\dim B^2(c)$ & $\dim Z^2(c)$  & \emph{$\dim H^2( c)$}\\
	$B_0$&V &0 & 3 & 6 & 3\\
	$B_1$&IV &	1 & 5 & 6 & \emph{1}\\
	$B_{2,\lambda}^\pm$&III, VI$_h$, VII$_h$&		2 & 5 & 6 & \emph{1}
	\end{tabular}
\end{center}

\section{Compatibility varieties}

Let $c\in\Lie(V)$.% be a Lie structure.
\begin{definition}
The affine algebraic variety $\Lie(V,c)\df\Lie(V)\cap Z ^2(c)\subseteq Z ^2(c)$ is called the \emph{compatibility variety} of $c$.%, or the \emph{relative} (to $c$) Bianchi variety.
\end{definition}
%\begin{proposition}\label{propCompatibilitˆVarie}
Obviously, $\Lie(V,c)$ can be understood as   the set of Lie  structures   which are compatible with $c$, or as   the union of all linear subspace of $\Lie(V)$ passing through $c$.
%\item   the set of tangent vectors to $\Lie(V)$ which are contained into $\Lie(V)$.
%\end{itemize}
%\end{proposition}
%\begin{proof}
%An easy consequence of [...].
%\end{proof}
%
%
So, $\Lie(V,c)$ is a conic variety.\par
% and any curve belonging to $\Lie(V,c)$ is a deformation of $c$, not necessarily linear.\par
%Any element $d\in \Lie(V,c)$ produces a linear deformation 
%\begin{equation}\label{eqDefLin}
%\gamma_d(t)\df (1-t)c+td
%\end{equation}
% of $c$. 
% On the other hand, let $\gamma$ be a linear deformation. Then, all the one--dimensional linear subspaces passing through a point of $\gamma$ are contained in $\Lie(V)$ (Remark \ref{remProiettivitˆ}). From this it follows that the image of $\gamma$ lies in a linear subspace contained in $\Lie(V)$, and as such is made by mutually compatible structures, by Proposition \ref{propCompatibilitˆVarie}.  
%\begin{lemma}
%Any linear deformation of $c\in\Lie(V)$ is of the form $\gamma_d$, $d\in \Lie(V,c)$.
%\end{lemma}
%
%
%Accordingly to some authors, two deformations of $c$ are equivalent if there exists an element of $\Stab(c)$ which sends one deformation into the other.  Then   $\frac{\Lie(V,c)}{\Stab(c)}$ may be interpreted as the moduli space of linear deformations of $c$. \par
%
%The orbit space $\frac{\Lie(V,c)}{\Stab(c)}$ is  naturally interpreted as  the moduli space of linear deformations of $c$.\par
%
%
% 
%
%
The canonical disassembling of $\Lie(V)$ and other results of Section \ref{secSmontaggioCanonico} are reproduced as well for the compatibility variety $\Lie(V,c)$, with unimodular $c$. 
%
%When $c$ is unimodular, the canonical disassembling described in Section \ref{secSmontaggioCanonico} can be straightforwardly adapted to the compatibility variety $\Lie(V,c)$. 
%
%This   simplifies description of   $\frac{\Lie(V,c)}{\Stab(c)}$ for an unimodular Lie structure $c$.
%
%\begin{remark}
%If $F={ \frac{1}{2}}(x_1^2 +   x_2^2\pm  x_3^2)$, then $\Stab(c_F)$ is   $O(3)$ (resp., $O(2,1)$) (see also Proposition \ref{propSimmetrieSimmetriche}). Similarly, for $F={ \frac{1}{2}}(x_1^2  \pm  x_2^2)$,  the group $\Stab(c_F)$ will be denoted   $O(2,0)$ or $O(1,1,0)$, respectively.  
%%A transformation which  leaves invariant the subspace  $\Span{ x_1, x_2}$   belongs to $O(2,0)$ (resp., $O(1,1,0)$) if, restricted on $\Span{x_1, x_2}$, is an element of  $O(2)$ (resp., $O(1,1)$). 
%Finally, notice that $\Stab(c_F)$, for  $F={ \frac{1}{2}}x_1^2 $,  coincides with the stabilizer of $x_1$.
%\end{remark}
%
%We do not describe the orbits of the action of $\Stab(c_F)$ on $\Lie_0(V)$, since this concerns the theory of symmetric bilinear forms (see \cite{Lam}).\par
In particular, 
$\Lie_0(V)\subseteq\Lie(V,c_F)$   for any   $F$. Consider the map $\zeta^F\df\pi_0|_{\Lie(V,c_F)}$. Then  we have 
$$
(\zeta^F)^{-1}(dG)=Z_N^2(dG)\cap Z_N^2(dF).
$$
% [SOSTITUIRE $G$ AD $F'$ OVUNQUE!]
\subsection{Computations}
In this subsection we shall describe the varieties $\Lie(V,c )$, for all types of structures $c$. Obviously, $\Lie(V,0)=\Lie(V)$, so we assume $c\neq 0$. \par
We   introduce the notation $$s^2\df \Span{{\textstyle \frac{1}{2}}d(x_1^2), {\textstyle \frac{1}{2}}d(x_2^2), {\textstyle \frac{1}{2}}d(x_1x_2)}.$$ Notice that $s^2$ identifies with the space of symmetric bilinear forms on $\Span{\xi^1,\xi^2}$.
\subsubsection{Compatibility variety of  $A_3^\pm$   structures}\label{subsubA3}

Let $c=c_F$. If  $\rank(F)=3$, then $Z_N^2(dF)=0$ and $\Lie(V,c)=\Lie_0(V)$ is a 6--dimensional vector subspace (see Fig. \ref{figComVarRank3}). 

\begin{figure}
\epsfig{file=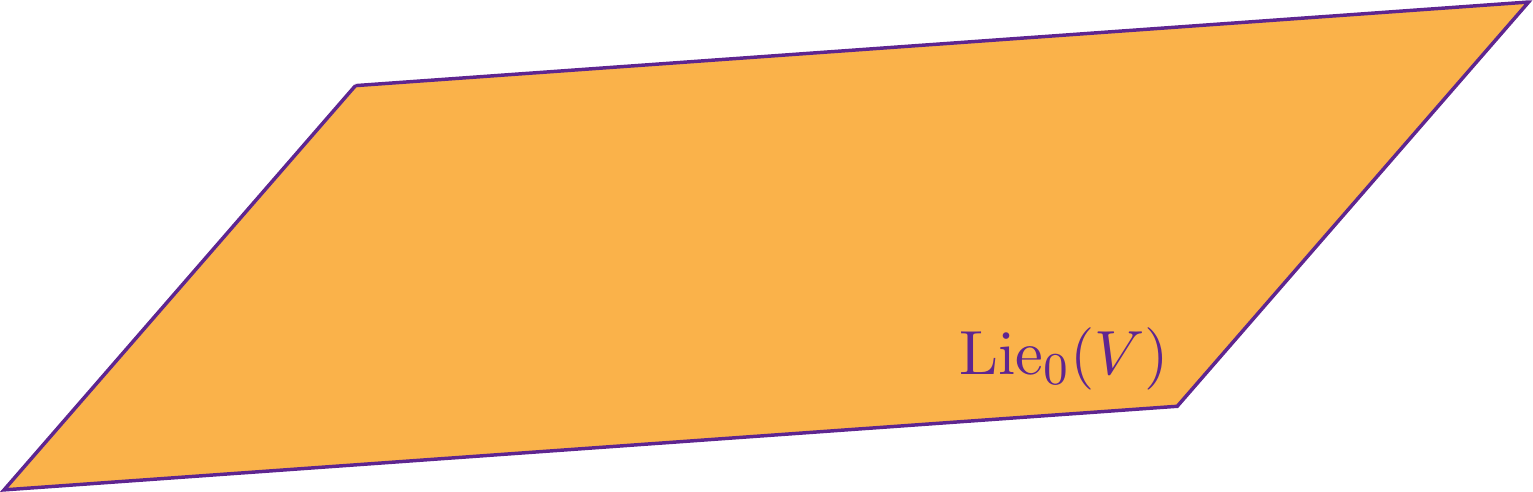,width=12cm}\caption{Only when $c$ is an unimodular  structure   of type $A_3^\pm$, $\Lie(V,c)$ is a linear space.\label{figComVarRank3}}
\end{figure}

\subsubsection{Compatibility variety of $A_2^\pm$ structures}\label{subsubA2}

Let now    $F={ \frac{1}{2}}(x_1^2 +\epsilon  x_2^2)$, $\epsilon\in\F\smallsetminus\{0\}$.
% In this case (see   [...])    $Z_N^2(dF) $ is the linear subspace generated by $\alpha_3$, so $\mathbb{P}Z_N^2(dF) $ is the single point $[\alpha_3]$.
%Finally, let $c_F$ be   of type $A_1$, e.g.,   $F=d{ \frac{1}{2}}(x_1^2  )$.
\begin{lemma}\label{lemZetaZero}
%The subspace  
$Z^2(\alpha_3)\cap\Lie_0(V)
%$ of $\Lie_0(V)$ of compatible with $\alpha_3$ unimodular Lie structures is $ 
=s^2$.
\end{lemma}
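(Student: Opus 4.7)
The plan is to verify the claim by a direct computation of the Schouten brackets, using the table already established in Proposition \ref{propProposizioneCommutatoriMisti}. Since $\Lie_0(V)$ consists of unimodular structures $c_{dG}$ with $G\in S^2(V)$, an element of $\Lie_0(V)$ sits in $Z^2(\alpha_3)$ precisely when $\Nh{c_{\alpha_3}}{c_{dG}}=0$, which by Proposition \ref{corCompatibilit�UN} is equivalent to $dG\wedge\alpha_3=0$. So the problem reduces to identifying the symmetric bilinear forms whose differential wedges trivially against $\alpha_3$.

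The key step is to expand a generic
\[
G=\tfrac{1}{2}(ax_1^2+bx_2^2+cx_3^2)+ex_2x_3+fx_1x_3+gx_1x_2
\]
in the natural basis of $\Lie_0(V)$ and apply Proposition \ref{propProposizioneCommutatoriMisti} term by term. All brackets with $c_{\alpha_3}$ vanish \emph{except} those coming from the summands involving the index $3$: namely $\tfrac{1}{2}d(x_3^2)$ contributes $2cx_3\boldsymbol{\xi}$, $d(x_2x_3)$ contributes $2ex_2\boldsymbol{\xi}$, and $d(x_1x_3)$ contributes $2fx_1\boldsymbol{\xi}$. Summing,
\[
\Nh{c_{\alpha_3}}{c_{dG}}=2\bigl(fx_1+ex_2+cx_3\bigr)\boldsymbol{\xi},
\]
which vanishes iff $c=e=f=0$, i.e.\ iff $G=\tfrac{1}{2}(ax_1^2+bx_2^2)+gx_1x_2$, which is precisely the general element of $s^2$.

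The computation is completely routine once one accepts Proposition \ref{propProposizioneCommutatoriMisti}, so there is no real obstacle; the only thing to be careful about is making sure one treats both the ``diagonal'' generators $\tfrac{1}{2}d(x_i^2)$ and the ``off-diagonal'' generators $d(x_ix_j)$ when expanding $dG$, since the two cases in Proposition \ref{propProposizioneCommutatoriMisti} are stated separately. One could alternatively argue more conceptually by computing $dG\wedge\alpha_3$ directly: writing $\alpha_3=x_1dx_2-x_2dx_1$, the three-form $dG\wedge\alpha_3$ involves $dx_3$ only through the $c$, $e$, $f$ terms, and the vanishing of the three coefficients it produces gives the same conclusion. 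Either way, the equality $Z^2(\alpha_3)\cap\Lie_0(V)=s^2$ follows at once.
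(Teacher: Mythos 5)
Your proof is correct and follows essentially the same route as the paper, whose entire proof of this lemma is the single line ``Immediately from Proposition \ref{propProposizioneCommutatoriMisti}''; you have simply spelled out the term-by-term expansion of a generic $dG$ against $\alpha_3$ that the author leaves implicit. The resulting obstruction $2(fx_1+ex_2+cx_3)\boldsymbol{\xi}$ matches the computation the paper itself carries out later (in the proof of the lemma on cocycles of non--unimodular structures), so nothing further is needed.
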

\begin{proof}
Immediately   from Proposition \ref{propProposizioneCommutatoriMisti}.
\end{proof}
\begin{proposition}
$\Lie(V,c_F)$ is the union \begin{equation}\label{eqVELA}
\Lie(V,c_F)=\Lie_0(V)\cup \Span{s^2 ,\alpha_3}
\end{equation}
of a 6--dimensional  and a  4--dimensional subspace,   intersecting along the 3--dimensional subspace $s^2$. 
\end{proposition}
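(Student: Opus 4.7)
The plan is to combine Proposition \ref{propSimpatica} (which pins down the ambient cocycle space) with Proposition \ref{corCompatibilit�UN} (which controls when an element of this cocycle space is a Lie structure) and Lemma \ref{lemZetaZero} (which identifies the relevant intersection with $\Lie_0(V)$).

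First I would compute $Z^2(c_F)$. Since $\rank(dF)=2$, Proposition \ref{propSimpatica} gives $Z^2(c_F)=\Lie_0(V)\oplus Z_N^2(dF)$, a $7$--dimensional space. Specializing the proof of Lemma \ref{lemCNC} to $\lambda=1$, $\mu=\epsilon$, $\nu=0$ shows $Z_N^2(dF)=\langle\alpha_3\rangle$. Hence every element of $\Lie(V,c_F)\subseteq Z^2(c_F)$ has a unique presentation
\[
dG+\lambda\alpha_3,\qquad dG\in\Lie_0(V),\ \lambda\in\F,
\]
in which $dG$ is the unimodular and $\lambda\alpha_3$ the purely non--unimodular component.

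Next, I would apply Proposition \ref{corCompatibilit�UN} to this decomposition: the 1--form $dG+\lambda\alpha_3$ corresponds to a Lie structure iff $dG\wedge(\lambda\alpha_3)=0$, i.e.\ iff $\lambda\, dG\wedge\alpha_3=0$. Splitting on the value of $\lambda$: when $\lambda=0$ the condition is vacuous and every $dG\in\Lie_0(V)$ is admissible, contributing the $6$--dimensional subspace $\Lie_0(V)$; when $\lambda\neq 0$ the condition becomes $dG\wedge\alpha_3=0$, i.e.\ $dG\in Z^2(\alpha_3)\cap\Lie_0(V)$, which by Lemma \ref{lemZetaZero} equals $s^2$. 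This case contributes the $4$--dimensional subspace $\Span{s^2,\alpha_3}=s^2\oplus\langle\alpha_3\rangle$.

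Taking the union of the two cases yields \eqref{eqVELA}. The intersection of the two components consists of those elements of $\Span{s^2,\alpha_3}$ whose $\alpha_3$--coefficient vanishes, which is precisely the $3$--dimensional space $s^2$; the stated dimensions $6$ and $4$ follow at once. There is no serious obstacle here---the only point that deserves attention is the bookkeeping for $\lambda=0$, which is what makes $\Lie(V,c_F)$ fail to be a single linear space and produces the two--branch conic structure described in the proposition.
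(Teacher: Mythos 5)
Your proof is correct and follows essentially the same route as the paper's: both arguments first force the purely non--unimodular component into $Z_N^2(dF)=\langle\alpha_3\rangle$ (via Lemma \ref{lemCNC}) and then, when that component is nonzero, pin the unimodular component down to $s^2$ via Lemma \ref{lemZetaZero}. The only cosmetic difference is that you enter through the decomposition $Z^2(c_F)=\Lie_0(V)\oplus Z_N^2(dF)$ of Proposition \ref{propSimpatica}, whereas the paper extracts the same constraint directly from the compatibility condition $dF\wedge d\alpha=0$.
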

\begin{proof}
Obviously, the right--hand side of \REF{eqVELA} is contained in the left one. Let  $c'=c_G+\alpha\in \Lie(V,c_F) $ with $\alpha\neq 0$.\par  Since  $c'$ is compatible with $c_F$,  $dF\wedge d\alpha_{c'}=0$. But $dF\wedge d\alpha_{c'}=dF\wedge d\alpha$, so   $dF\wedge d\alpha=0$, i.e. $\alpha\in Z_N^2(dF)$. In view of Lemma \ref{lemCNC}, $Z_N^2(dF)$ is the one--dimensional subspace generated by $\alpha_3$. Hence $\alpha=\lambda\alpha_3$, $\lambda\neq 0$.\par
This shows that   $c_G$, being compatible with $\alpha$, is compatible with   $\alpha_3$ and, by  Lemma \ref{lemZetaZero},  is a linear combination of $  \frac{1}{2}d(x_1^2),   \frac{1}{2}d(x_2^2), d(x_1x_2)$. 
\end{proof}
Figure \ref{figComVarRank2} shows that the structure of $\Lie(V,c_F)$ is quite simple. The 3--dimensional subspace $s^2$ is precisely the locus where the fibers of $\zeta^F$ are nontrivial. The restriction of $\zeta^F$ to it is a trivial bundle with fiber $\Span{\alpha_3}$. 
\begin{figure}
\epsfig{file=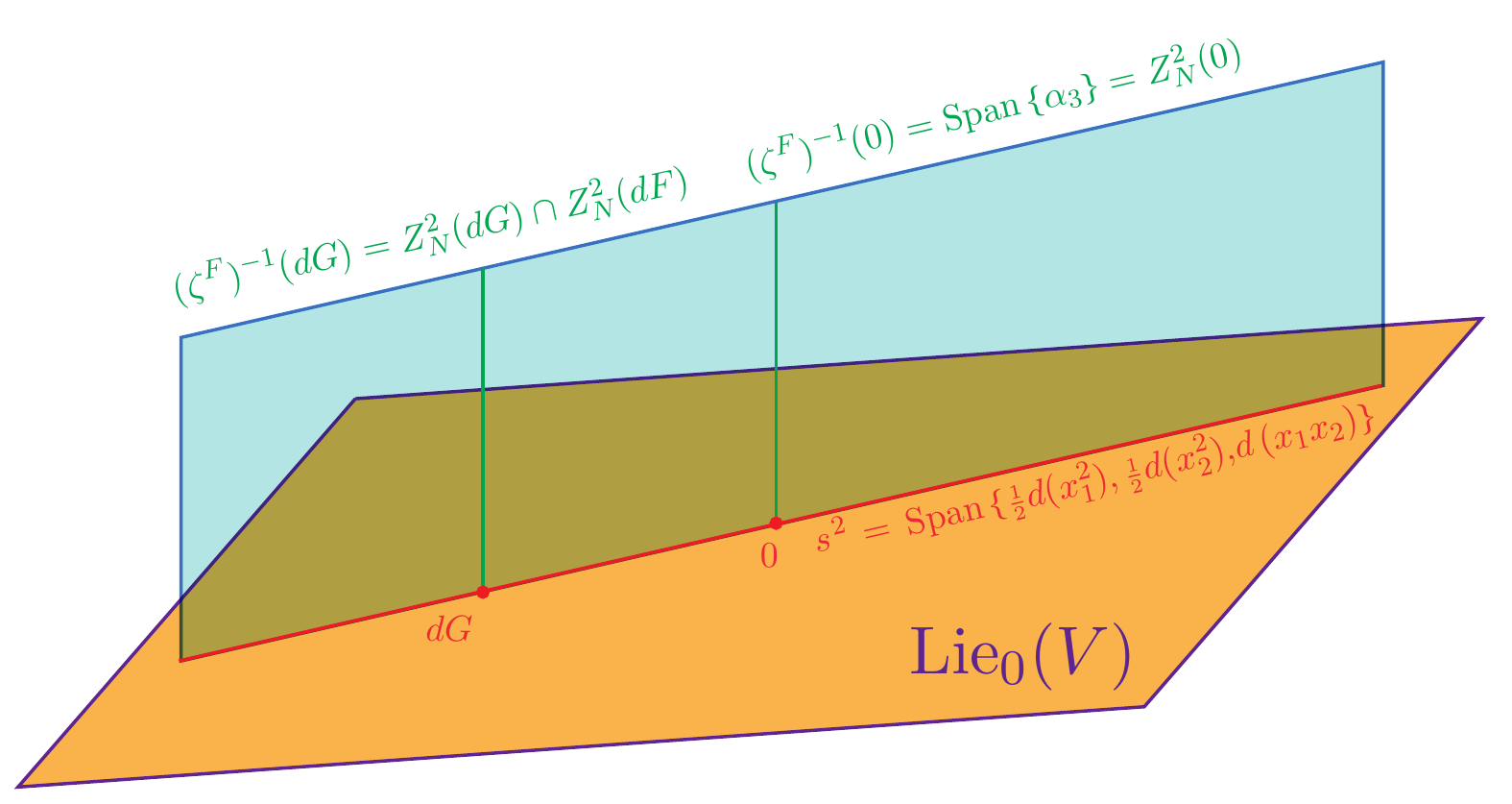,width=12cm}\caption{The compatibility variety af a  structure $c_F$ of type $A_2^\pm$.\label{figComVarRank2}}
\end{figure}

%
%Describe  $\frac{\Lie(V,c_F)}{\Stab(c_F)}$. Firstly, the subspace $\Span{ {\textstyle \frac{1}{2}}d(x_1^2),  {\textstyle \frac{1}{2}}d(x_2^2), d(x_1x_2) }$ is $\Stab(c_F)$--in\-var\-i\-ant. The restricted to it action  coincides with that of $O(2)$ or $O(1,1)$, depending on the signature of $F$, on the space of symmetric bilinear forms on a 2--dimensional $\F$--vector space, whose orbits are well--known.\par Now let $\Omega$ be   a $\Stab(c_F)$--orbit in  $\Span{ {\textstyle \frac{1}{2}}d(x_1^2),  {\textstyle \frac{1}{2}}d(x_2^2), d(x_1x_2) }$. By the same arguments of Proposition \ref{propFibraRangoDue}, it follows  that the set of parallel sections of $\frac{\zeta|^F_\Omega}{\sigma|^F_\Omega}$ is $\F$. 

%\par Analogously to [...], introduce the linear subspace $Z_0^2(\alpha)\df Z^2(\alpha)\cap\Lie_0(V)$ of $\Lie_0(V)$, for a purely non--unimodular structure $\alpha$.\par
%From [...] it follows that $Z_0^2(\alpha_3)=\langle d{ \frac{1}{2}}(x_1^2), d{ \frac{1}{2}}(x_2^2), d(x_1x_2) \rangle$. 
%\begin{lemma}
%$ Z_0^2(0)=\Lie_0(V)$, while $\dim Z_0^2(\alpha)=3$ for all non--zero purely non--unimodular Lie structures $\alpha$.
%\end{lemma}
%\begin{proof}
%Immediate consequence of [...].
%\end{proof}

%
%

\subsubsection{Compatibility variety  of $A_1$ structures}\label{subsubA1}
This case is more complicated (see Fig. \ref{figComVarRank1}). Let $F= {\textstyle \frac{1}{2}}x_1^2$.
\begin{proposition}
If $(0,0)\neq(a,b)\in\F^2$, then $\zeta^F$ is a rank--2 trivial bundle over the  line $\Span{{\textstyle \frac{1}{2}}d(x_1^2)}$  with the fiber $\Span{\alpha_2,\alpha_3}$, and over $\Span{{\textstyle \frac{1}{2}}d(x_1^2), {\textstyle \frac{1}{2}}d((bx_2-ax_3)^2)}\smallsetminus \Span{{\textstyle \frac{1}{2}}d(x_1^2)}$, $\zeta^F$ is a rank--1 trivial bundle with the fiber  $\Span{a\alpha_2+b\alpha_3}$.   Fibers of $\zeta^F$  are trivial over the rest of $\Lie_0(V)$.
\end{proposition}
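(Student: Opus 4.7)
The plan is to translate the fiber computation into a concrete linear-algebra problem in the bilinear-form model of Section~\ref{secSmontaggioCanonico} and then read off the three claimed cases from a rank analysis.

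First I identify $Z_N^2(dF)$: specializing the proof of Lemma~\ref{lemCNC} to $F=\tfrac12 x_1^2$ (i.e.\ $\lambda=1$, $\mu=\nu=0$) gives $\Nh{c_F}{c_{a\alpha_1+b\alpha_2+c\alpha_3}}=2a x_1\boldsymbol{\xi}$, so $Z_N^2(dF)=\Span{\alpha_2,\alpha_3}$. Hence for any $dG\in\Lie_0(V)$, the fiber $(\zeta^F)^{-1}(dG)=Z_N^2(dG)\cap Z_N^2(dF)$ is, via $(q,r)\mapsto q\alpha_2+r\alpha_3$, a linear subspace of $\F^2$.

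To compute that subspace, parameterise $G=\tfrac12(Ax_1^2+Bx_2^2+Cx_3^2)+Ex_2x_3+Fx_1x_3+Hx_1x_2$ and apply Proposition~\ref{propProposizioneCommutatoriMisti} term by term to pair $G$ against $\alpha_2$ and $\alpha_3$. Summing the contributions yields
\[
\Nh{c_G}{c_{q\alpha_2+r\alpha_3}}=2\bigl((qH+rF)x_1+(qB+rE)x_2+(qE+rC)x_3\bigr)\boldsymbol{\xi},
\]
so $(q,r)\in(\zeta^F)^{-1}(dG)$ precisely when $M_G\bigl(\begin{smallmatrix}q\\r\end{smallmatrix}\bigr)=0$, for
$M_G=\bigl(\begin{smallmatrix}H&F\\ B&E\\ E&C\end{smallmatrix}\bigr)$.

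The three clauses of the proposition then correspond to the three possible ranks of $M_G$. Rank $0$ forces $B=C=E=F=H=0$, placing $dG$ on the axis $\Span{\tfrac12 d(x_1^2)}$ and producing the rank-$2$ trivial bundle with fiber $\Span{\alpha_2,\alpha_3}$. Rank $2$ makes $\ker M_G=0$, yielding trivial fibers. Rank $1$ is the delicate case, cut out by the three $2\times 2$ minors $HE-FB$, $HC-FE$, $E^2-BC$; then $\ker M_G$ is one-dimensional, spanned by a unique (up to scalar) $(a,b)\in\F^2\setminus\{(0,0)\}$, and reading the rows of $M_G$ as scalar multiples of $(b,-a)$ gives $B=\mu b^2$, $C=\mu a^2$, $E=-\mu ab$ for some $\mu\in\F$, placing $dG$ on $P_{a,b}\setminus\Span{\tfrac12 d(x_1^2)}$ with fiber $\Span{a\alpha_2+b\alpha_3}$.

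The main obstacle is the rank-$1$ case, where one must simultaneously parameterise the locus by $(a,b)$ modulo scale and pin down the kernel direction to match the claimed fiber. The shared entry $E$ in two rows of $M_G$ is what enforces a Pl\"ucker-type syzygy among the three minors, which makes the $(a,b)$ labelling canonical up to a common scalar and ties the abstract rank condition to the explicit geometric picture of the proposition.
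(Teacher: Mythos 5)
Your reduction of the fiber computation to the kernel of the matrix $M_G=\left(\begin{smallmatrix}H&F\\B&E\\E&C\end{smallmatrix}\right)$ is correct and is in fact a more systematic route than the paper's own argument, which instead uses Lemma \ref{lemCNC} to case-split on $\rank dG$ (equivalently on $\dim Z_N^2(dG)$) and then asserts that, for each line $\Span{a\alpha_2+b\alpha_3}$, there is a unique rank--1 and a unique rank--2 form $dG$ (up to proportionality) realizing that line inside $Z_N^2(dG)\cap\Span{\alpha_2,\alpha_3}$. However, your execution of the rank--1 case has a genuine gap: you record only the consequences of rows two and three of $M_G$ being proportional to $(b,-a)$, obtaining $B=\mu b^2$, $C=\mu a^2$, $E=-\mu ab$, and then silently conclude that $dG$ lies on $\Span{\frac12 d(x_1^2),\frac12 d((bx_2-ax_3)^2)}$. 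But the first row only gives $(H,F)=\kappa(b,-a)$ for an arbitrary $\kappa\in\F$, i.e.\ it contributes a term $\kappa\, d(x_1(bx_2-ax_3))$ to $G$; nothing in your argument forces $\kappa=0$, so the claimed localization of the rank--$\le 1$ locus does not follow.

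This is not a repairable slip in favour of the statement: completing your own computation shows that the locus of nontrivial fibers is the union over $(a:b)$ of the \emph{three}--dimensional spaces $\Span{\frac12 d(x_1^2),\ \frac12 d((bx_2-ax_3)^2),\ d(x_1(bx_2-ax_3))}$, strictly larger than the planes in the statement. Concretely, for $G=x_1x_2$ one has $M_G=\left(\begin{smallmatrix}1&0\\0&0\\0&0\end{smallmatrix}\right)$ with kernel $\Span{\alpha_3}$; equivalently, by Proposition \ref{propProposizioneCommutatoriMisti}, $\Nh{c_{d(x_1x_2)}}{c_{\alpha_3}}=0=\Nh{c_{\frac12 d(x_1^2)}}{c_{\alpha_3}}$, so the fiber of $\zeta^F$ over $d(x_1x_2)$ is $\Span{\alpha_3}\neq 0$, although $d(x_1x_2)$ lies in none of the planes $\Span{\frac12 d(x_1^2),\frac12 d((bx_2-ax_3)^2)}$. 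Hence the final clause of the proposition (\virg{fibers are trivial over the rest of $\Lie_0(V)$}) fails, and the same example contradicts the uniqueness claims in the paper's own proof (e.g.\ that $\frac12(x_1^2+(bx_2-ax_3)^2)$ is, up to proportionality, the only rank--2 form with $Z_N^2=\Span{a\alpha_2+b\alpha_3}$). Your method is the right one, but carried to completion it corrects the statement rather than proving it: the base loci must be enlarged to include the cross terms $d(x_1(bx_2-ax_3))$.
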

\begin{proof}
As it follows from Lemma \ref{lemCNC},  $Z_N^2(dF)=\Span{\alpha_2,\alpha_3}$. Therefore, the intersection $Z_N^2(dF)\cap Z_N^2(dG)$ is 2--dimensional if and only if $Z_N^2(dF) = Z_N^2(dG)$, i.e., if $dG$ belongs to the line $\Span{{\textstyle \frac{1}{2}}d(x_1^2)}$.\par
 The intersection  $Z_N^2(dF)\cap Z_N^2(dG)$ can be of dimension 1 in the following two cases. First,  $Z_N^2(dG)$ is a 2--dimensional subspace intersecting $\Span{\alpha_2,\alpha_3}$ along a line, and, second, $Z_N^2(dG)$ is  a 1--dimensional subspace contained in $\Span{\alpha_2,\alpha_3}$.\par
In the first case, a line in $\Span{\alpha_2,\alpha_3}$ can be written as  $\Span{a\alpha_2+b\alpha_3}$, with $(0,0)\neq(a,b)$. Then $ dG= {\textstyle \frac{1}{2}}d((bx_2-ax_3)^2)$ is the only rank--1 structure  such that $Z_N^2 (dG)$ intersects  $\Span{\alpha_2,\alpha_3}$ along $\Span{a\alpha_2+b\alpha_3}$.\par
In the second case, $dG$ must be a rank--2 structure such that $Z_N^2 (dG)$ is precisely $\Span{a\alpha_2+b\alpha_3}$. Up to proportionality, this is $G={\textstyle \frac{1}{2}}(x_1^2+(bx_2-ax_3)^2)$.
\end{proof}
\begin{figure}
\epsfig{file=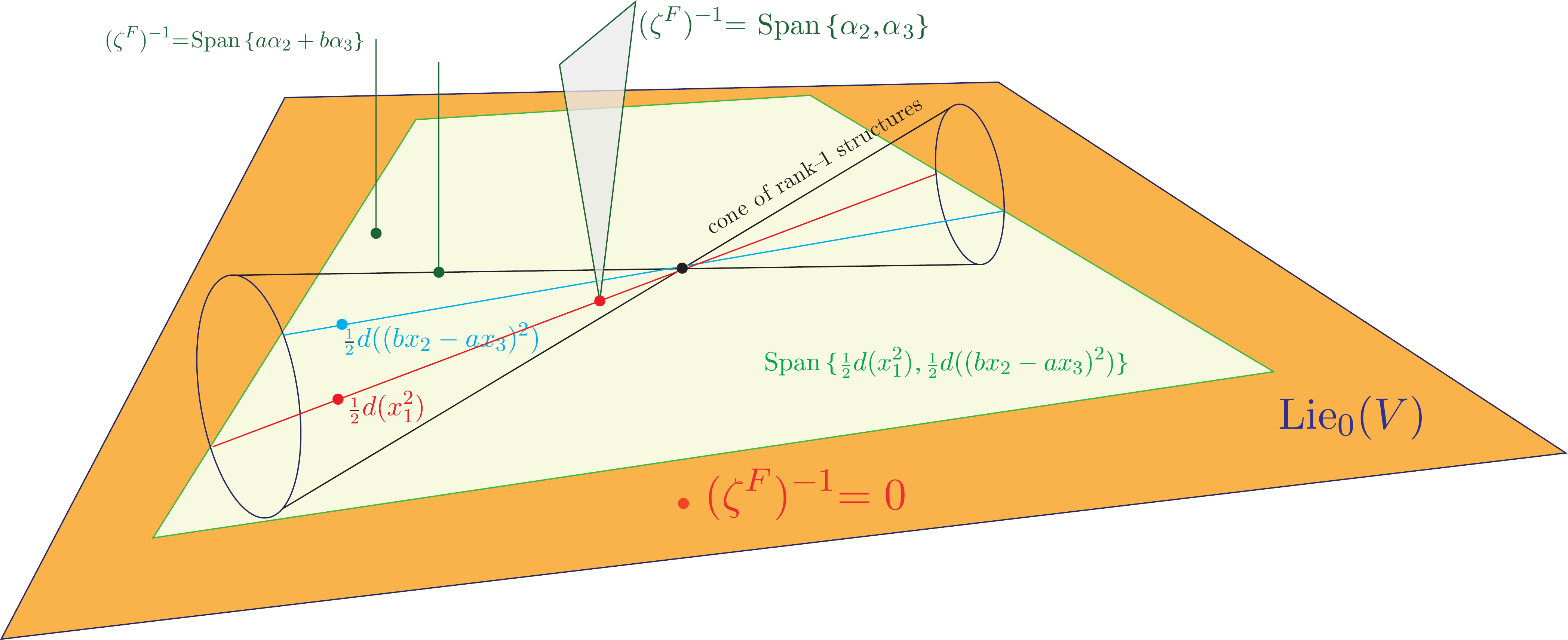,width=12cm}\caption{The compatibility variety af a  structure $c_F$ of type $A_1$.\label{figComVarRank1}}
\end{figure}

%Describe now $\frac{\Lie(V,c_F)}{\Stab(c_F)}$. The line $\Span{{\textstyle \frac{1}{2}}d(x_1^2)}$  is $\Stab(c_F)$--invariant, and on it the action is trivial. Then an orbit $\Omega$ in $\Span{{\textstyle \frac{1}{2}}d(x_1^2)}$ is just a point, and by the same arguments of Proposition \ref{propFibraRangoUno} it follows that $\frac{\zeta|^F_\Omega}{\sigma|^F_\Omega}$ has only one nonzero parallel section.\par
%On the other hand, $\Stab(c_F)$ causes the plane  $\Span{{\textstyle \frac{1}{2}}d(x_1^2), {\textstyle \frac{1}{2}}d((bx_2-ax_3)^2)}$ to rotate around the line 
%$\Span{{\textstyle \frac{1}{2}}d(x_1^2)}$. If $\Omega$ is the orbit of an element not belonging to the line $\Span{{\textstyle \frac{1}{2}}d(x_1^2)}$, then  the set of parallel sections of $\frac{\zeta|^F_\Omega}{\sigma|^F_\Omega}$ is $\F$.

\subsubsection{Compatibility varieties of $B_0$ structures}
If $\alpha_i$ (resp., $d(x_ix_j)$) is a  base vector  of $N$ (resp., $\Lie_0(V)$), then the dual to it covector will denoted by $\alpha_i^\circ$ (resp., $d(x_ix_j)^\circ$).\par
As it follows from Lemma \ref{lemLemmaCheRitenevoFineASeStessoMaChePoiAllaFineHoDovutoRichiamare}, the space of 2--cocycles of the structure $c_F+c_{\alpha_3}$, with $F={{\textstyle\frac{1}{2}} (\lambda x_1^2 +\mu x_2^2)}$, is the  6--dimensional space 
\begin{equation}\label{eqZetaDueNonUnimodulariBASE}
\Span{s^2,\alpha_1-\lambda d(x_1x_3),\alpha_2-\mu d(x_2x_3), \alpha_3}.
\end{equation} 
If $c$ is a  structure of type $B_0$, i.e., $\lambda=\mu=0$, then \begin{equation}\label{eqCompVarBZero}\Lie(V,c)=\zeta^{-1}(s^2).\end{equation}
$\zeta|_{\Lie(V,c)}$ is a stratified vector bundle over $s^2$. Indeed (see Lemma \ref{lemCNC}), $\zeta$ is of rank 3 over $\{0\}$, it is of rank 2 over the quadric $d(x_1^2)^\circ d(x_2^2)^\circ-(d(x_1x_2)^\circ)^2=0$, and it is of rank 1 over the rest of   $s^2$ (see Fig. \ref{figComVarNU0}).
\begin{figure}
\epsfig{file=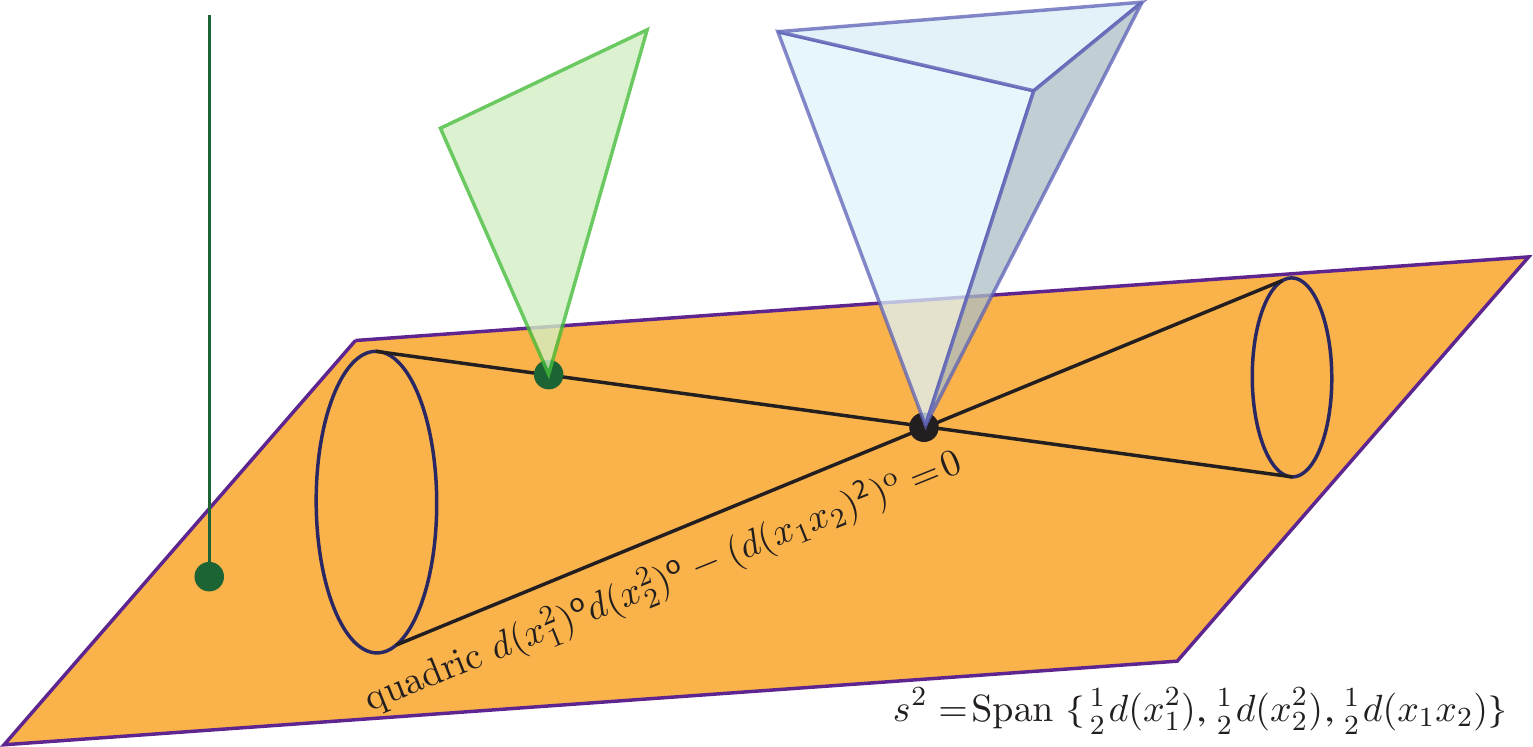,width=12cm}\caption{The compatibility variety of a non--unimodular Lie structure of type $B_0$.\label{figComVarNU0}}
\end{figure}
\subsubsection{Compatibility varieties of $B_1$ structures}

If $c$ is a  structure of type $B_1$, then $\lambda=1$ and $\mu=0$.  Directly from \REF{eqZetaDueNonUnimodulariBASE} it follows that $\Lie(V,c)$ is the intersection of $\zeta^{-1}(\Span{s^2, d(x_1x_3)})$ with the affine hyperplane   $(\alpha_1)^\circ=-(d(x_1x_3))^\circ$. Moreover, if $c_G+ad(x_1x_3)+\alpha\in\Lie(V,c)$, with   $c_G \in s^2$, it  is easy to prove that   $a=0$. In other words, \begin{equation}\label{eqCompVarBUno}\Lie(V,c)= \zeta^{-1}(s^2)\cap\{ (\alpha_1)^\circ =0\},\end{equation}
i.e., $\zeta|_{\Lie(V,c)}$ is a stratified vector bundle over $s^2$, whose fibers are subspaces of the corresponding  fibers of $\zeta$.\par Describe now the corresponding  strata.  
Let $c_G+\alpha\in\Lie(V,c)$. If $c_G\in\Span{{\textstyle \frac{1}{2}}d(x_1^2)}$ then $\zeta|_{\Lie(V,c)}^{-1}(c_G)=\Span{\alpha_2,\alpha_3}$. If $c_G$ is a point of the quadric $d(x_1^2)^\circ d(x_2^2)^\circ-(d(x_1x_2)^\circ)^2=0$, not belonging to the line $\Span{{\textstyle \frac{1}{2}}d(x_1^2)}$, then $\zeta|_{\Lie(V,c)}^{-1}(c_G)$ is the 1--dimensional subspace  $(\alpha_1)^\circ=0$   of $\zeta^{-1}(c_G)$. If $c_G$ is not in the quadric above,  then $\zeta|_{\Lie(V,c)}^{-1}(c_G)$ coincides with $\zeta^{-1}(c_G)$, i.e., $\Span{\alpha_3}$ (see Fig. \ref{figComVarNU1}).
\begin{figure}
\epsfig{file=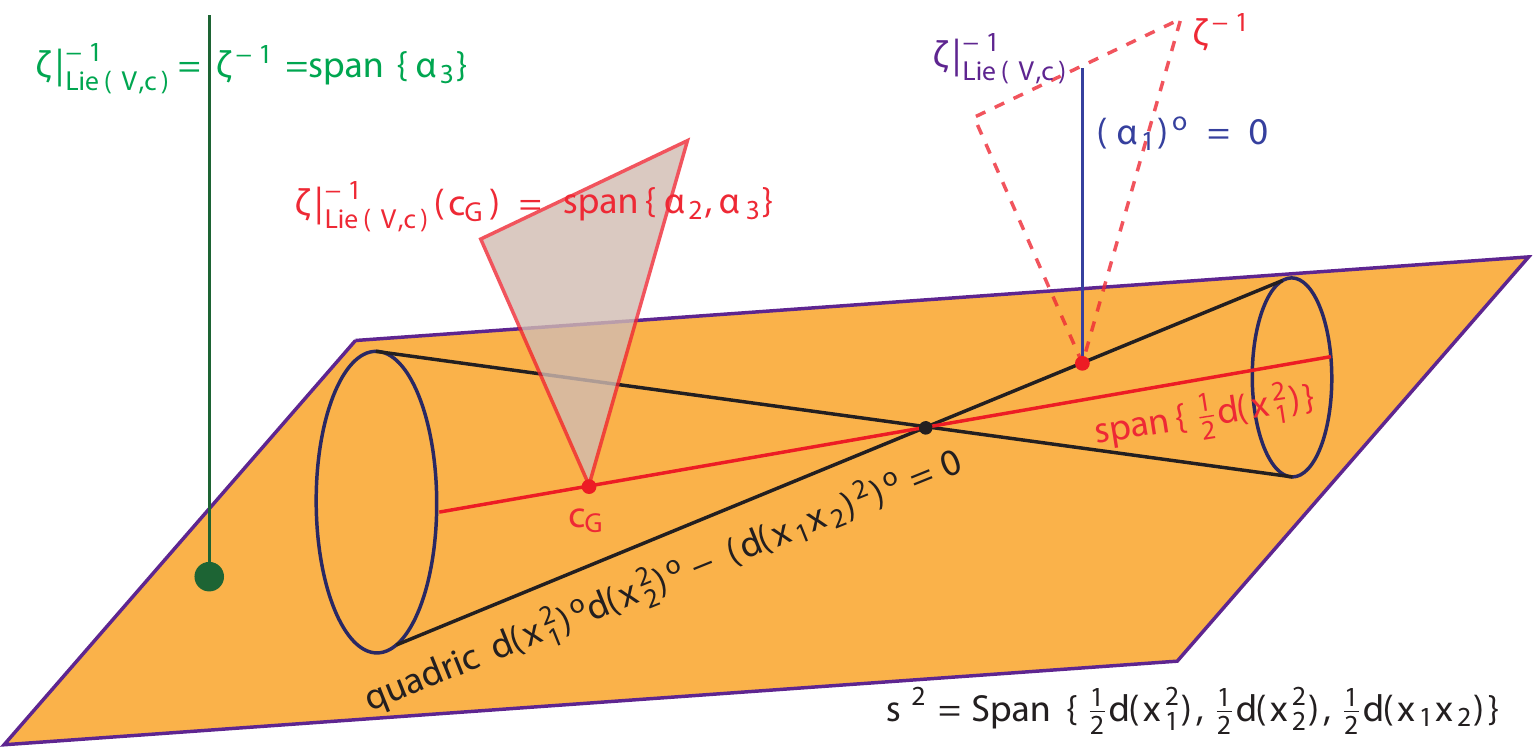,width=12cm}\caption{The compatibility variety of a non--unimodular Lie structure of type $B_1$.\label{figComVarNU1}}
\end{figure}

\subsubsection{Compatibility varieties of $B_{2,\nu}^\pm$ structures}

Finally, if $\lambda=\pm\mu=\nu^{-1}$, then $c$ is a structure of type $B_{2,\nu}^\pm$. In this case $\Lie(V,c)$ is the intersection  of $\zeta^{-1}(\Span{s^2, d(x_1x_3),d(x_2x_3)})$ with the   affine subspace 
$$
\left\{\begin{array}{l} (\alpha_1)^\circ=-\nu(d(x_1x_3))^\circ  \\  (\alpha_2)^\circ=\mp\nu(d(x_2x_3))^\circ .\end{array}\right.
$$
Moreover, if $c'=c_G+ed(x_1x_3)+fd(x_2x_3)+\alpha\in\Lie(V,c)$, with   $c_G \in s^2$, it  is easy to prove that   $e^2=\pm f^2$. \par
If $e=f=0$, i.e., the unimodular component of $c'$ belongs to $s^2$, then    \begin{equation}\label{eqCompVarBDue}\Lie(V,c)\cap \zeta^{-1}(s^2)= \zeta^{-1}(s^2)\cap\{ (\alpha_1)^\circ =(\alpha_2)^\circ=0\},\end{equation}
i.e., the restriction of $\zeta|_{\Lie(V,c)}$ over $s^2$ is a trivial vector bundle with the fiber $\Span{\alpha_3}$.\par
If $ef\neq 0$, then it is easy to prove that $c'=ac + e(d(x_1x_3)-\nu\alpha_1)+f(d(x_2x_3)\mp\nu\alpha_2)$. In other words, the restriction of $\zeta|_{\Lie(V,c)}$ over the degenerate quadric $\{(d(x_1x_3)^\circ)^2\mp (d(x_2x_3)^\circ)^2=0\}\subseteq\Span{{\scriptstyle \frac{1}{2}}d(x_1^2)\pm {\scriptstyle \frac{1}{2}}d(x_2^2), d(x_1x_3),d(x_2x_3)} $ is the graph of the map  
\begin{equation}\label{eqCompVarBDueAff}
 a({\scriptstyle \frac{1}{2}}d(x_1^2)\pm {\scriptstyle \frac{1}{2}}d(x_2^2))+ed(x_1x_3)+fd(x_2x_3)\longmapsto \nu(a\alpha_3-e \alpha_1\mp f \alpha_2).
\end{equation}
\par
Comparing \REF{eqCompVarBZero}, \REF{eqCompVarBUno}, \REF{eqCompVarBDue} and \REF{eqCompVarBDueAff}, one observes that when the rank of the unimodular component of $c$ increases, the dimension of the fibers of $\zeta|_{\Lie(V,c)}$ over $s^2$ decreases. Observe that in all cases,  $\Lie(V,c)\cap\Lie_0(V)=s^2$. It is worth also stressing that    elements $c'\in\Lie(V,c)$ such that $\zeta(c')\not\in s^2$  exists only for structures  $c$   of the type $B_{2,\nu}^\pm$ (see Fig. \ref{figComVarNU}).

\begin{figure}
\epsfig{file=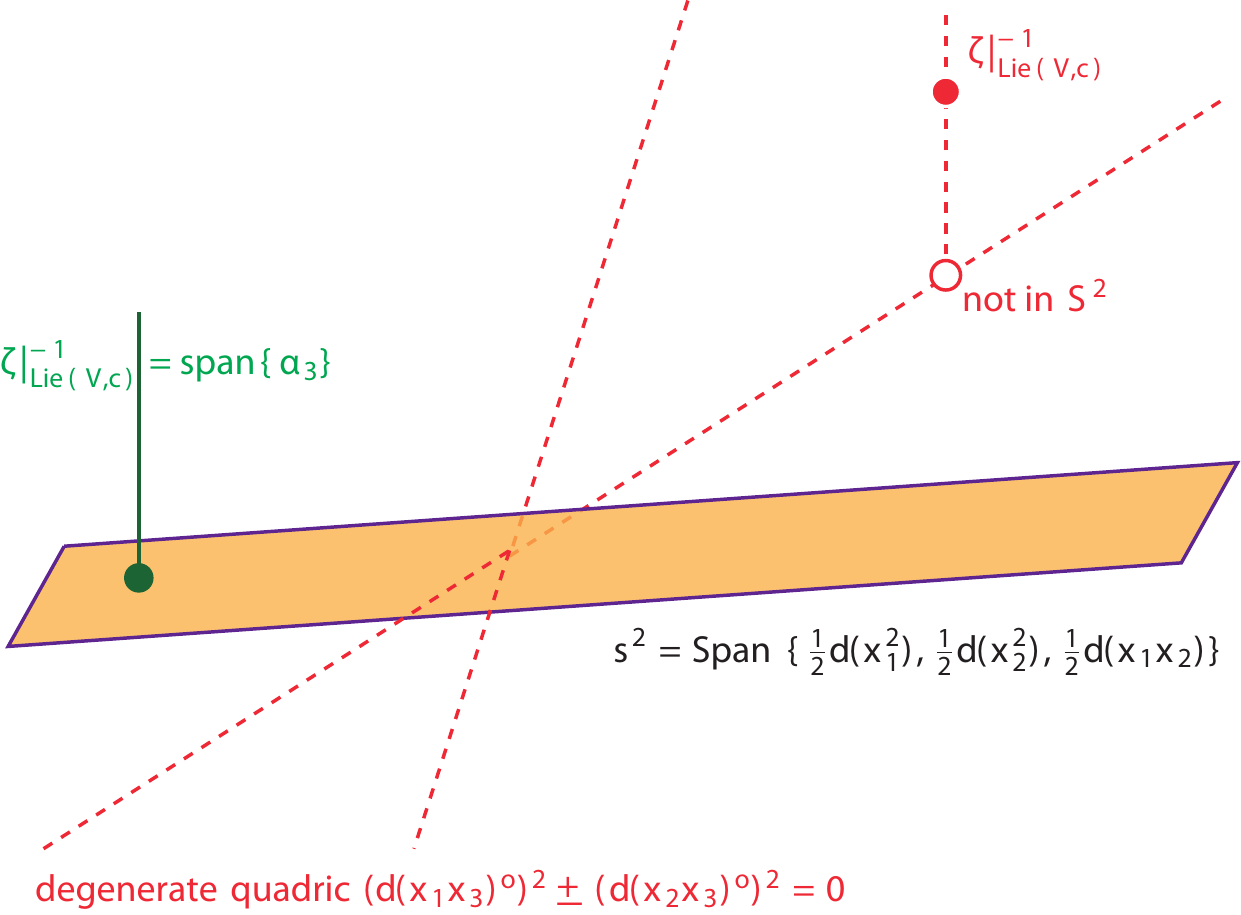,width=12cm}
\caption{The compatibility variety of a non--unimodular Lie structure of type $B_{2,\nu}^\pm$.\label{figComVarNU}}
\end{figure}

\subsection{Deformations of Lie structures}

Recall that  a (algebraic, smooth, continuous) deformation of a Lie structure $c$ is a (algebraic, smooth, continuous) curve in $\Lie(V)$, i.e., a map $\gamma:\F\longmapsto\Lie(V)$, passing through $c$. \par
%When required by the application (e.g., unitary representation in Q.M., simmetric solutions of E.E., etc.), the family $\{c_t\}$ must assumed to be smooth. Smoothness naturally leads to the concept of \emph{infinitesimal deformations}.
%, whose link with cohomology soon became evident [...],  and was later generalized to that of universal deformation [...].\par
%A particular interest was paid to those deformations of $c$ which, once projected on the   moduli space, reduce to two points, one of which is the orbit of $c$. They are called \emph{contractions} [...].\par
Denote by $\mathcal{F}$ the algebra of \emph{algebraic functions} on $\Lie(V)$, i.e.,   the quotient of $S(V\otimes_\F V)$ by the ideal generated by  \REF{eqAVrietˆBianchiInCoordinate}. If $\F=\R$, define also $C^\infty(\Lie(V))$ as the quotient of the algebra $C^\infty(V\otimes_\F V)$  by the ideal generated by  \REF{eqAVrietˆBianchiInCoordinate}. A map from $\F$ to $\Lie(V)$ is called alebraic (resp., \emph{smooth}) if it corresponds to an algebra homomorphism $\mathcal{F}\longmapsto \F[x]$ (resp., $C^\infty(\Lie(V))\longmapsto C^\infty(\R)$) in the sense of \cite{Jet}. In particular, a \emph{linear map} from $\F$ to $\Lie(V)$, i.e., an $\F$--homomorphsim from $\F$ to $V\otimes_\F V$ whose image is contained in $\Lie(V)$, is   algebraic (and smooth, if $\F=\R$). \par
A defomation is called \emph{linear} if   $\gamma$ is a straight line. Observe that the linear deformation 
\begin{equation}\label{eqDefLin}
\gamma_d(t)\df (1-t)c+td
\end{equation}
 of $c$ is naturally associated with the  element $d\in \Lie(V,c)$, $d\neq 0$.  
Obviously, any linear deformation of $c\in\Lie(V)$ is of the form $\gamma_d$.\par
We define an \emph{infinitesimal deformation} to be tangent vector at $c$ of a deformation $\gamma$. In particular, the infinitesimal deformation associated with $\gamma_d$ is the affine vector $\gamma'_d(0)$, connecting  $c$ and $d$. Infinitesimal deformations must be understood as elements of the tangent space to $\Lie(V)$.  Two infinitesimal deformations are called \emph{equivalent} if one is obtained from another by action of $d_c\psi$, with $\psi\in \Stab(c)$. \par
The tangent space  to $\Lie(V)$ is naturally identified with $Z^2(c)$, and the above described  action of $\Stab(c)$ coincides with a natural  action of $\Stab(c)$ on $Z^2(c)$. Moreover, the subset of $Z^2(c)$ that corresponds to the linear    deformations coincides with  $\Lie(V,c)$,  and the action of $\Stab(c)$ restricts to it.  
\subsection{Some examples of deformations}
Now we shall exploit the above description of   $\Lie(V,c)$  in order to describe deformations of a 3--dimensional Lie structure $c$  and   their equivalence classes as well. By abusing the language we shall call the quotient $\frac{\Lie(V,c)}{\Stab(c)}$ \virg{orbit space}.\par
To this end, it will be necessary to consider some special subgroups of $\GL(V)$.
%
%Accordingly to some authors, two deformations of $c$ are equivalent if there exists an element of $\Stab(c)$ which sends one deformation into the other.  Then   $\frac{\Lie(V,c)}{\Stab(c)}$ may be interpreted as the moduli space of linear deformations of $c$. \par
%
%The orbit space $\frac{\Lie(V,c)}{\Stab(c)}$ is  naturally interpreted as  the moduli space of linear deformations of $c$.\par
 %
%
\begin{remark}\label{remRemarkoGruppiOrtogonali}
If $F={ \frac{1}{2}}(x_1^2 +   x_2^2\pm  x_3^2)$, then $\Stab(c_F)$ is   $O(3)$ (resp., $O(2,1)$) (see also Proposition \ref{propSimmetrieSimmetriche}). Similarly, for $F={ \frac{1}{2}}(x_1^2  \pm  x_2^2)$,  the group $\Stab(c_F)$ will be denoted   $O(2,0)$ or $O(1,1,0)$, respectively.  
%A transformation which  leaves invariant the subspace  $\Span{ x_1, x_2}$   belongs to $O(2,0)$ (resp., $O(1,1,0)$) if, restricted on $\Span{x_1, x_2}$, is an element of  $O(2)$ (resp., $O(1,1)$). 
Finally, notice that $\Stab(c_F)$, for  $F={ \frac{1}{2}}x_1^2 $,  coincides with the stabilizer of $x_1$. We do not describe the orbits of the action of $\Stab(c_F)$ on $\Lie_0(V)$, since this concerns the theory of symmetric bilinear forms (see \cite{Lam}).
\end{remark}
Denote by   $p:\Lie(V)\longmapsto\frac{\Lie(V)}{\GL(V)}$   a natural projection of sets. 
%Let $c\in\Lie(V)$ be a Lie structure.
%In our framework we assume the following
Recall that  a (algebraic, smooth) deformation $\gamma$ of $c=\gamma(0)$ is called a \emph{contraction} of  $c$ if 
%\begin{definition}
%An algebraic (resp., smooth, in the case $\F=\R$) curve $\gamma:\F\longmapsto\Lie(V)$ such that $\gamma(0)=c$ is called a \emph{deformation} of $c$. A deformation is called \emph{linear} if such is $\gamma$. If 
$p\circ\gamma$ takes two different values for $t=0$ and $t\neq 0$. %, then $\gamma$ is called a \emph{contraction}.
%\end{definition}
%SPIEGARE $C^\infty(\Lie(V))$.\par
%A deformation is called a \emph{contraction} if $p\circ\gamma$ takes only two values, one of which is $\GL(V)\cdot c$, $p:\Lie(V)\longmapsto\frac{\Lie(V)}{\GL(V)}$ being the natural projection.\par

\subsubsection{Deformations of  $A_3^+$   structures}
Let $F={ \frac{1}{2}}(x_1^2 +   x_2^2 +  x_3^2)$ and $c=c_F$.
Then  $\Lie(V,c)=\Lie_0(V)$ (see Subsection \ref{subsubA3}), and $\Stab(c)=O(3)$ (see Remark \ref{remRemarkoGruppiOrtogonali}). Hence the orbit space  identifies with $\frac{S^2(V)}{O(3)}$, i.e., with  the space of diagonal 3 by 3 matrices over $\F$. \par
Observe that no deformation of $c$ is a contraction.
%\begin{remark}
%This example reveals that the above concept of equivalence of deformations is in some respects inadequate. Indeed, all small enough deformations of $c$ produce an equivalent to $c$ structure, and, as such, they should be considered \virg{trivial}, i.e., equivalent to the constant deformation $\gamma_c$.  Moreover, a deformation of a structure of type $A_3^-$ to  a structure of $A_3^+$, should not be considered at all, since the associated to it straight line will necessarily pass through a rank--2 structure. \par
%There should be given a different notion of deformation of a Lie structure which allows a structure to be deformed only within the closure of its orbit, and a different notion of equivalence of deformations, allowing all linear transformations.  We will not pursue this point of view.
% \end{remark}
 The reader should not confuse between deformations of Lie algebras and deformations of Lie algebra structures.

\subsubsection{Deformations of  $A_2^+$   structures}
Let $F={ \frac{1}{2}}(x_1^2 +   x_2^2)$ and $c=c_F$.\par
Observe that in this case $s^2$ is $O(2,0)$--invariant and the orbits of the restricted action of $O(2,0)$ are the same as the orbits of the natural action of $O(2)$ on $s^2$. It is easy to prove that the set of parallel sections of $\zeta^F|_\Omega$, for such an $\Omega$, is identified with $\F$.
\begin{remark} 
If intersection of two subspaces of a vector space is non--trivial, then there are smooth curves passing from one subspace to the other, in contrast with the algebraic ones. In particular there are smooth curves connecting any point of $\Lie_0(V)$ with any point of  $\Span{ s^2,\alpha_3}$ (See Figure \ref{figComVarRank2}). This is obviously not the case for algebraic curves. So, this example illustrates the difference between algebraic and smooth deformations.
\end{remark}
\subsubsection{Deformations of  $A_1$   structures}
Let $F={ \frac{1}{2}}(x_1^2 )$ and $c=c_F$.\par
In this case, the line $\Span{ \frac{1}{2}(x_1^2 )}$   is $\Stab(c)$--invariant and the  restricted action is trivial, i.e., $\Omega$ is a point. Similarly to Proposition \ref{propFibraRangoUno}, one proves that  there is only one nonzero parallel section of  $\zeta^F|_\Omega$.\par
Under the action of $\Stab(c)$, the plane $\Span{\frac{1}{2}(x_1^2 ),\frac{1}{2}((bx_2-ax_3)^2 )}$ (see Fig. \ref{figComVarRank1}) rotates around the axis  $\Span{ \frac{1}{2}(x_1^2 )}$. If  $\Omega$ is an orbit of $\Stab(c)$ not contained in this axis, then the set of parallel sections of $\zeta^F|_\Omega$ is identified with $\F$.

\subsection{Effect of deformations on symplectic foliation in the case $\F=\R$}
A deformation of a Lie structure $c$ induces a deformation of the   symplectic foliation of $P^c$. Note that only the solvable  3--dimensional Lie stuctures admit non--trivial deformations. 
% We consider now three examples of deformations, in the case $\F=\R$, and we describe the corresponding to them deformations of the symplectic foliation. It will be useful to review the symplectic foliation of a solvable Lie structure $c$.\par
In such a case, $P^c$ can be brought to the form
%the Poisson bi--vector associated with $c$ can be put in the form 
  \begin{equation}\label{eqStruttSol}
P_c=X_\phi\wedge\frac{\partial}{\partial x^3},
\end{equation}
   with $\phi\in\End(\R^2)$. Indeed, 
   solvable Lie structures $B_0$, $B_1$, $B_{2,\lambda}^\pm$, and  $A_2^\pm$ are of this form, with $\varphi$ being 
    $$\scriptsize{\left(\begin{array}{cc}-1 & 0 \\0 & -1\end{array}\right)},\ \scriptsize{\left(\begin{array}{cc}-1 & 1 \\0 & -1\end{array}\right)},\ \scriptsize{\left(\begin{array}{cc}-\lambda & 1 \\ \mp 1 & -\lambda\end{array}\right)},\textrm{{\normalsize\  and }}\scriptsize{\left(\begin{array}{cc}0 & 1 \\ \mp 1 & 0\end{array}\right)},$$ respectively. The nil-potent Lie structure $A_1$ corresponds to $\varphi=\scriptsize{\left(\begin{array}{cc}0 & 1 \\0 & 0\end{array}\right)}$.\par
%       
%   if $X_\phi=\phi_a^bx_b\frac{\partial}{\partial x_a}$, where $a,b=1,2$, then $P^c=\phi_a^bx_b\frac{\partial}{\partial x_a}\wedge\frac{\partial}{\partial x_3}$, and
%  \begin{equation}
%\alpha_c=\frac{1}{2}\left( \phi_2^1dx_1^2-\phi_1^2 dx_2^2\right)+\phi_2^2x_2dx_1-\phi_1^1x_1dx_2.
%\end{equation}
%Solvable Lie structures $B_0$, $B_1$, $B_{2,\lambda}^\pm$, and  $A_2^\pm$ are obtained when $\varphi$ is equal to $\scriptsize{\left(\begin{array}{cc}-1 & 0 \\0 & -1\end{array}\right)}$, $\scriptsize{\left(\begin{array}{cc}-1 & 1 \\0 & -1\end{array}\right)}$, $\scriptsize{\left(\begin{array}{cc}-\lambda & 1 \\ \mp 1 & -\lambda\end{array}\right)}$ and $\scriptsize{\left(\begin{array}{cc}0 & 1 \\ \mp 1 & 0\end{array}\right)}$, respectively. The nil-potent Lie structure $A_1$ is  determined by $\varphi=\scriptsize{\left(\begin{array}{cc}0 & 1 \\0 & 0\end{array}\right)}$, and the Abelian one, $A_0$,  by $\phi=0$ (see \cite{Marmo1994} for the theory of decomposable Poisson bi--vectors in higher dimensions). \par
If $X_\phi=\phi_a^bx_b\frac{\partial}{\partial x_a}$, $a,b=1,2$, then $P^c=\phi_a^bx_b\frac{\partial}{\partial x_a}\wedge\frac{\partial}{\partial x_3}$,
  \begin{equation}
\alpha_c=\frac{1}{2}\left( \phi_2^1dx_1^2-\phi_1^2 dx_2^2\right)+\phi_2^2x_2dx_1-\phi_1^1x_1dx_2,
\end{equation}
and, therefore, 
\begin{eqnarray*}
P^c_{x_1}&=&-\phi(x_1)\frac{\partial}{\partial x^3} ,\\
P^c_{x_2}&=&-\phi(x_2)\frac{\partial}{\partial x^3} ,\\
P^c_{x_3}&=&X_\phi.
\end{eqnarray*}
 Notice that in each point $p=(x_1,x_2,x_3)$ where $\phi(x_1)$ and  $\phi(x_2)$ are not simultaneously zero,  $\Span{ P^c_{x_1}, P^c_{x_2}}$ is the line generated by ${\scriptstyle\left.\frac{\partial}{\partial x^3}\right|_p} $. So, it holds the following lemma.
 \begin{lemma}
Symplectic leaves of a solvable  Lie structure corresponding to Poisson bi--vector \REF{eqStruttSol} are either pull--backs of   trajectories of $X_\phi$ in $\R^2\smallsetminus\ker\phi$ via the projection $\R^3\longmapsto\R^2$, or single points of the subspace $\ker\phi\oplus\langle x_3 \rangle$.
\end{lemma}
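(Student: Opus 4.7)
The plan is to describe the distribution of Hamiltonian vector fields of $P^c$ pointwise, observe that it is involutive, and identify its integral manifolds explicitly. Since symplectic leaves are by definition the maximal integral submanifolds of the distribution generated by all Hamiltonian vector fields, and since linearity together with Leibniz rule reduce this distribution to the $\F$--span of $P^c_{x_1}, P^c_{x_2}, P^c_{x_3}$ at each point, everything boils down to reading off the three vectors computed just above the lemma and checking when the span is zero versus two--dimensional.

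First I would split into two cases according to whether $p=(x_1,x_2,x_3)$ lies in $\ker\phi\oplus\langle x_3\rangle$ or not. If $(x_1,x_2)\in\ker\phi$, then $\phi(x_1)=\phi(x_2)=0$, so $P^c_{x_1}|_p=P^c_{x_2}|_p=0$, and moreover $X_\phi|_p=\phi(x_1,x_2)=0$, so $P^c_{x_3}|_p=0$ as well. All three Hamiltonian generators vanish at $p$, hence the distribution is trivial at $p$ and the symplectic leaf through $p$ reduces to the single point $\{p\}$, which gives the second alternative of the lemma.

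If, on the contrary, $(x_1,x_2)\notin\ker\phi$, then at least one of $\phi(x_1)$, $\phi(x_2)$ is nonzero, and the observation recalled just before the lemma guarantees that $\frac{\partial}{\partial x^3}|_p\in\Span{P^c_{x_1}|_p,P^c_{x_2}|_p}$. Since $X_\phi=P^c_{x_3}$ is a horizontal vector field (no $\frac{\partial}{\partial x_3}$ component) and is nonzero at $p$ by nondegeneracy of $\phi$ on $(x_1,x_2)$, the distribution at $p$ is exactly the two--dimensional plane $\Span{\frac{\partial}{\partial x^3},X_\phi|_p}$. The key fact to check here is that this distribution is involutive: because $X_\phi$ depends only on $x_1,x_2$ one has $[X_\phi,\frac{\partial}{\partial x_3}]=0$, so involutivity is automatic and Frobenius applies.

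Finally, to recognize the integral leaves, consider the canonical projection $\pi:\R^3\longrightarrow\R^2$, $(x_1,x_2,x_3)\mapsto(x_1,x_2)$. The pushforward of the distribution at points with $(x_1,x_2)\notin\ker\phi$ is the one--dimensional line $\Span{X_\phi}$ on $\R^2\smallsetminus\ker\phi$, whose integral curves are precisely the trajectories of $X_\phi$; the vertical $\frac{\partial}{\partial x^3}$--direction lies in the kernel of $d\pi$. Therefore the integral leaf through $p$ is $\pi^{-1}(\gamma)$, where $\gamma$ is the $X_\phi$--trajectory through $(x_1,x_2)$, which is the first alternative of the lemma. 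The only real subtlety is ensuring that on such a cylinder $\pi^{-1}(\gamma)$ the base curve $\gamma$ never hits $\ker\phi$, which follows from the well--known fact that trajectories of a linear vector field cannot cross its zero locus, so the two strata described in the statement are indeed invariant and cover $\R^3$ disjointly.
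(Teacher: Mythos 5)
Your proposal is correct and follows essentially the same route as the paper, which likewise derives the lemma directly from the computation of $P^c_{x_1}$, $P^c_{x_2}$, $P^c_{x_3}$ and the observation about their pointwise span. Your write-up merely makes explicit the case split on $\ker\phi$, the involutivity/Frobenius step, and the invariance of the two strata, all of which the paper leaves implicit.
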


\subsubsection{Deformation of $B_{2,\lambda}^\pm$ to $A_2^\pm$}

In this case, elliptic (resp. hyperbolic) spirals converge to circles (resp. hyperbola), as $\lambda\to 0$. Since the $B_2^\lambda$'s are mutually non--isomorphic for different values of $\lambda$, such deformation is not a contraction.

\subsubsection{Deformation of $B_{2,1}^\pm$ to $A_1$}
Consider the family of structures $\{c_\mu^\pm\}_{\mu\in\R^+}$ of the form \REF{eqStruttSol}, with 
\begin{equation*}
\varphi_\mu^\pm={\left(\begin{array}{cc}-1 & 1 \\ \mp \mu & -1\end{array}\right)}
\end{equation*}
and  
\begin{equation*}
\alpha_{c_\mu^\pm}=\frac{1}{2}(dx_1^2\pm \mu dx_2^2)+\alpha_3.
\end{equation*}

Then the trajectory of $X_{\varphi_\mu^\pm}$ issuing from $(x_1^0,x_2^0)$, $x_1^0\neq 0$, is given by 

\begin{eqnarray*}
x_1(t)&=& e^{-t}\sqrt{(x_1^0)^2+\mu(x_2^0)^2}\cos\left(\arctan\left(\sqrt{\mu}\frac{x_2^0}{x_1^0}\right)+\sqrt{\mu}t\right)\\
x_2(t)&=& e^{-t}\sqrt{\frac{(x_1^0)^2}{\mu}+(x_2^0)^2}\sin\left(\arctan\left(\sqrt{\mu}\frac{x_2^0}{x_1^0}\right)+\sqrt{\mu}t\right)
\end{eqnarray*}

and

\begin{eqnarray*}
x_1(t)&=&  \frac{x_1^0+\sqrt{\mu}x_2^0}{2}e^{(\sqrt{\mu}- 1) t}+\frac{x_1^0-\sqrt{\mu}x_2^0}{2}e^{-(\sqrt{\mu}+1) t}  \\
x_2(t)&=& \frac{x_1^0+\sqrt{\mu}x_2^0}{2\sqrt{\mu}}e^{(\sqrt{\mu}-1) t}-\frac{x_1^0-\sqrt{\mu}x_2^0}{2\sqrt{\mu}}e^{-(\sqrt{\mu}+1) t}    
\end{eqnarray*}

Trajectories of $X_{\varphi_\mu^+}$ (red) and of $X_{\varphi_\mu^-}$ (blue), issuing from   vertices of a regular hexagon centered at the origin, are represented in  Figure \ref{DD}, for $\mu$ running from almost zero (first picture) to 1 (last picture). We see that both elliptic (determined by $c_\mu^+$) and hyperbolic (determined by $c_\mu^-$) spirals converge to the same foliation as $\mu\to 0$, and the constructed  deformation is a contraction.\par
%More precisely, 
%consider the family of structures $\{c_\mu^\pm\}_{\mu\in\R^+}$ of the form \REF{eqStruttSol}, where 
%\begin{equation*}
%\varphi_\mu^\pm={\left(\begin{array}{cc}-1 & 1 \\ \mp \mu & -1\end{array}\right)}
%\end{equation*}
%and consequently
%\begin{equation*}
%\alpha_{c_\mu^\pm}=\frac{1}{2}(dx_1^2\pm \mu dx_2^2)+\alpha_3.
%\end{equation*}

%Then the trajectory of $X_{\varphi_\mu^\pm}$ emanating from $(x_1^0,x_2^0)$, $x_1^0\neq 0$, is given by 

%\begin{eqnarray*}
%x_1(t)&=& e^{-t}\sqrt{(x_1^0)^2+\mu(x_2^0)^2}\cos\left(\arctan\left(\sqrt{\mu}\frac{x_2^0}{x_1^0}\right)+\sqrt{\mu}t\right)\\
%x_2(t)&=& e^{-t}\sqrt{\frac{(x_1^0)^2}{\mu}+(x_2^0)^2}\sin\left(\arctan\left(\sqrt{\mu}\frac{x_2^0}{x_1^0}\right)+\sqrt{\mu}t\right)
%\end{eqnarray*}

%and

%\begin{eqnarray*}
%x_1(t)&=&  \frac{x_1^0+\sqrt{\mu}x_2^0}{2}e^{(\sqrt{\mu}- 1) t}+\frac{x_1^0-\sqrt{\mu}x_2^0}{2}e^{-(\sqrt{\mu}+1) t}  \\
%x_2(t)&=& \frac{x_1^0+\sqrt{\mu}x_2^0}{2\sqrt{\mu}}e^{(\sqrt{\mu}-1) t}-\frac{x_1^0-\sqrt{\mu}x_2^0}{2\sqrt{\mu}}e^{-(\sqrt{\mu}+1) t}    
%\end{eqnarray*}

%In Figure \ref{DD} are portrayed the trajectories of $X_{\varphi_\mu^+}$ (red) and of $X_{\varphi_\mu^-}$ (blue), emanating from the vertexes of a regular hexagon centered at the origin, as $\mu$ ranges from nearly zero (first picture) to 1 (last picture).

\begin{figure}\caption{Projection on the $(x_1,x_2)$--plane of the symplectic leaves of the structures  $B_{2,1}^+$ (red) and $B_{2,1}^-$ (blue), as they undergo a simultaneous deformation   to $A_1$ (red and blue  overlapped).}\label{DD}
\epsfig{file=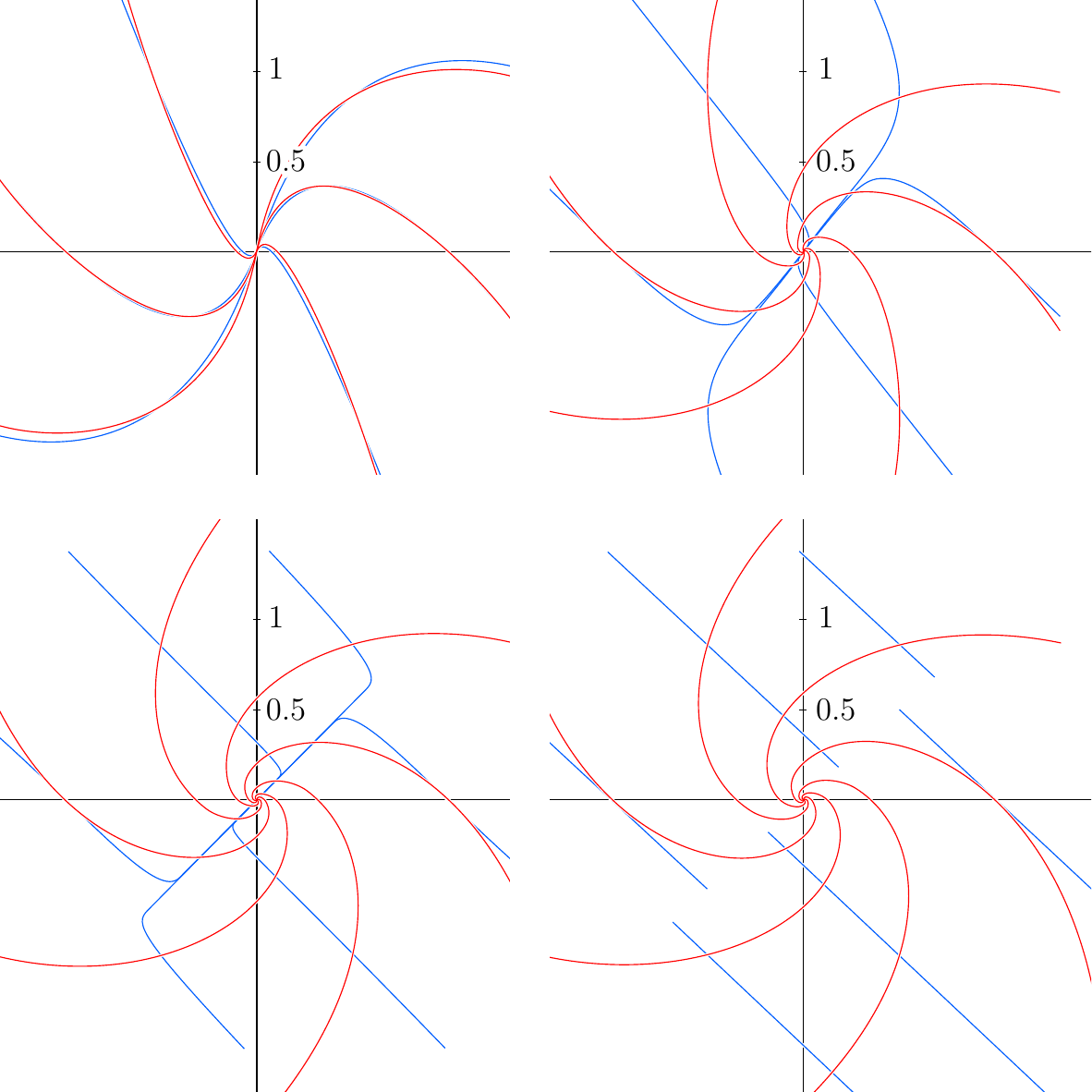, width=10cm}
\end{figure}

\subsubsection{Deformation of $B_1$ to $A_1$}

The deformation $\{c_\lambda\}_{\lambda\in\R}$ of the form \REF{eqStruttSol}, with 
\begin{equation*}
\varphi_\lambda={\left(\begin{array}{cc}-\lambda & 1 \\ \mp 0 &- \lambda\end{array}\right)}
\end{equation*}
and  
\begin{equation*}
\alpha_{c_\lambda}=\frac{1}{2}dx_1^2 + \lambda\alpha_3,
\end{equation*}
is a contraction. 
The trajectory of $X_{c_\lambda}$ issuing from $(x_1^0,x_2^0)$, which  is given by 
\begin{eqnarray*}
x_1(t)&=& x_1^0e^{-\lambda t} \\
x_2(t)&=& (x_1^0t+x_2^0)e^{-\lambda t} 
\end{eqnarray*}
and converges to the vertical straight line passing through $(x_1^0,x_2^0)$,  as  $\lambda\to 0$.

\subsubsection*{Aknowledgements}

The author is indebted to prof. Vinogradov, who carefully  supervised the works on the manuscript since its conception, and to prof. Marmo, for inspiring advices.

\end{document}